\documentclass[pdflatex,sn-mathphys-num]{sn-jnl}

\usepackage{graphicx}%
\usepackage{multirow}%
\usepackage{amsmath,amssymb,amsfonts}%
\usepackage{amsthm}%
\usepackage{mathrsfs}%
\usepackage[title]{appendix}%
\usepackage{xcolor}%
\usepackage{textcomp}%
\usepackage{manyfoot}%
\usepackage{booktabs}%
\usepackage{algorithm}%
\usepackage{algorithmicx}%
\usepackage{algpseudocode}%
\usepackage{listings}%
\usepackage{tikz}
\usetikzlibrary{arrows.meta,positioning,fit,backgrounds,calc}

\theoremstyle{thmstyleone}
\newtheorem{theorem}{Theorem}
\newtheorem{lemma}[theorem]{Lemma}
\newtheorem{proposition}[theorem]{Proposition}%

\theoremstyle{thmstyletwo}

\newtheorem{remark}{Remark}
\newtheorem{conjecture}[theorem]{Conjecture}

\newtheorem{problem}[theorem]{Problem}
\theoremstyle{thmstylethree}
\newtheorem{definition}{Definition}

\begin{document}

\title[Criticality in indirect chemotaxis]
{Elliptic criticality versus Volterra memory in indirect chemotaxis cascades}

\author*[1]{\fnm{Louis Shuo} \sur{Wang}}
\email{wang.s41@northeastern.edu}

\affil[1]{\orgdiv{Department of Mathematics},
\orgname{Northeastern University},
\orgaddress{
\city{Boston},
\state{MA},
\postcode{02115},
\country{USA}
}}

\abstract{Indirect signal production is often treated as a higher-order variant of classical Keller–Segel chemotaxis, but its critical structure depends strongly on how the signal cascade is closed.  This paper separates two asymptotic regimes of a two-stage signalling mechanism.  In the parabolic–elliptic–elliptic limit, the chemoattractant is generated by the self-adjoint fourth-order operator $K_\tau=(I-\tau\Delta)^{-1}(I-\Delta)^{-1}$.  We prove its spectral positivity, entropy-dissipation structure, fourth-order principal scaling, and logarithmic kernel singularity in four dimensions.  Consequently, the correct critical space is $L^{N/4}$, and $N=4$ is the mass-critical dimension.  A concentration calculation identifies the natural threshold candidate $M_* = 64\pi^2\tau/\chi$, while the sharp threshold theorem is formulated as an Adams/logarithmic-HLS open problem.  In contrast, the mixed elliptic–parabolic cascade cannot be reduced to a static fourth-order kernel.  Its eliminated signal is a Volterra memory operator whose near-diagonal multiplier has the same order as the classical Keller–Segel drift.  Thus its critical theory must be based on mixed space–time estimates, not static elliptic scaling. Numerical experiments support our operator-level distinction.}

\keywords{chemotaxis
Keller--Segel system;
indirect signal production;
fourth-order elliptic cascade;
Volterra memory;
critical mass;
Adams inequality;
entropy dissipation;
parabolic--elliptic systems}

\pacs[MSC Classification]{35K55, 35K58, 35B33, 35Q92, 35B44, 45D05, 47D06}
\maketitle

\maketitle

\section{Introduction, Main Results, and Mathematical Positioning}
\label{sec:intro-main-results}

\subsection{Background and motivation}
\label{ssec:introduction_background}

\begin{figure}[htbp]
    \centering
    \includegraphics[width=0.9\linewidth]{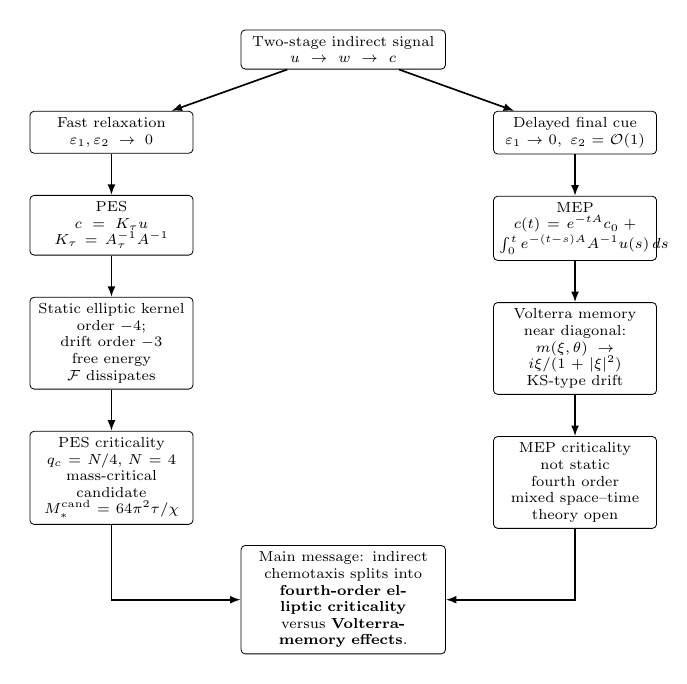}
\caption{Logical framework of the paper.  A two-stage signalling cascade has
two distinct asymptotic closures: PES yields a static fourth-order elliptic
criticality, whereas MEP yields a Volterra-memory mechanism with classical
near-diagonal drift.}
\label{fig:logical-framework}
\end{figure}

The classical Keller--Segel (KS) theory \cite{keller1970initiation,keller1971model,hillen2009user,bellomo2015toward,winkler2016two,liu2025bidirectional,owolabi2026chemotaxis,kiselev2022chemotaxis} is founded on the parabolic--elliptic system:
\begin{equation}
\label{eq:KS-intro}
\partial_t u=\Delta u-\chi\nabla\cdot(u\nabla v),\qquad -\Delta v=u.
\end{equation}
This yields a drift $\nabla v=\nabla(-\Delta)^{-1}u$ that is a Fourier multiplier of order $-1$. Under the parabolic scaling $u_\lambda(x,t)=\lambda^\alpha u(\lambda x,\lambda^2t)$, diffusion ($\lambda^{\alpha+2}$) and drift ($\lambda^{2\alpha}$) balance at $\alpha=2$, giving $\|u_\lambda(\cdot,t)\|_{L^1}=\lambda^{2-N}\|u(\cdot,\lambda^2t)\|_{L^1}$. Thus, $N=2$ is the mass-critical dimension ($q_c=N/2$) for the classical Keller--Segel theory. This 2D criticality \cite{jager1992explosions,wang2025analysis,nagai1995blow,nagai1997application,blanchet2006two,blanchet2008infinite,wang2025analysis1,buseghin2026existence} is governed by the logarithmic singularity of $(-\Delta)^{-1}$ and the sharp $8\pi/\chi$ HLS threshold \cite{carlen1992competing,yu2026pattern,blanchet2006two,blanchet2008infinite,wang2026algebraic,lieb2001analysis,carlen2025stability,ngo2025optimal}, while higher-dimensional dynamics depend heavily on specific structural settings \cite{winkler2010aggregation,wang2026breakdown,winkler2013finite,biler1998local,biler2011large,yu2026rigorous}.

Biological mechanisms like \textit{Dictyostelium} relay \cite{garcia2009group,morimoto2024ion,hayashida2026establishing,hirose2025dictyostelium}, bacterial memory \cite{scanlon2025exploring,othmer2013excitation,zhang2023bacterial,tu2013quantitative,gosztolai2020cellular}, and tumor invasion \cite{roussos2011chemotaxis,liu2023angiogenic,wang2023resveratrol} necessitate moving beyond instantaneous signalling to indirect signal production models \cite{tao2017critical,winkler2022family,chen2024negligibility}. This shift fundamentally alters the interaction kernel's criticality. A static elliptic cascade changes the interaction order, creating a four-dimensional critical regime \cite{fujie2017application,fujie2019blowup,yu2026beyond,hosono2025global} rooted in higher-order potential theory and sharp Adams inequalities \cite{adams1988sharp,adams2003sobolev,hebey2000nonlinear,liang2025global,lin1998classification,martinazzi2009classification,ruf2013sharp}. In contrast, a parabolic signal equation generates a Volterra memory operator requiring a semigroup and maximal-regularity framework \cite{pazy2012semigroups,henry1981geometric,jin2025infinite,lunardi2012analytic,wang2026damage,amann1995linear,pruss2016moving,mao2026global,dai2026critical} to handle its near-diagonal singularity and distributed memory effects \cite{xiang2025critical,yang2026critical,laurenccot2024singular,cai2026optimal,mao2025critical,mao2025finite1,shi2021spatial}.

Ultimately, these indirect mechanisms can induce infinite-time aggregation \cite{tao2017critical}, overcoming classical finite-time blow-up. Rigorous links between indirect and direct signalling have been established via singular limits \cite{bui2026parabolic,yu2026microscopic,liu2025global}. Finally, analyzing Dirac-type aggregation \cite{nagai2000chemotactic,wang2025multi,mao2024dirac} and continuing solutions beyond blow-up relies fundamentally on the weak solution frameworks developed for chemotaxis systems \cite{senba2002weak}.

\subsection{Dimensional derivation and two models}
\label{subsec:dimensional-derivation}

To situate our framework (Figure~\ref{fig:logical-framework}) in the broad literature in Section~\ref{ssec:introduction_background}, we model the cascade by the dimensional system
\begin{equation}
\label{eq:dimensional-cascade}
\begin{cases}
    \partial_t u = D_u\Delta u-\chi_0\nabla\cdot(u\nabla c),\\[2pt]
    \varepsilon_1\,\partial_t w = D_w\Delta w-\lambda_w w+\alpha u,\\[2pt]
    \varepsilon_2\,\partial_t c = D_c\Delta c-\lambda_c c+\beta w,
\end{cases}
\end{equation}
where \(D_u,D_w,D_c>0\) are diffusivities, \(\lambda_w,\lambda_c>0\) are
degradation rates, \(\alpha,\beta>0\) are production rates, and
\(\varepsilon_1,\varepsilon_2>0\) are the relaxation times of the two signal
stages. 

$u$ represents the population density, which produces, releases, or
activates an intermediate mediator \(w\). This mediator then generates the
chemoattractant or motility cue \(c\) sensed by the cells. Thus the migration
velocity is determined by \(\nabla c\), but \(c\) is not produced directly by
\(u\).  In biological terms, the cascade
\[
        u \longrightarrow w \longrightarrow c
\]
may represent relay-mediated chemoattractant production, enzymatic activation
of an initially inactive precursor, matrix-bound storage followed by release of
a soluble cue, or extracellular degradation/conversion processes that separate
the source population from the final sensed signal.
Each mediator stage has an intrinsic length scale
\(\ell_w=\sqrt{D_w/\lambda_w}\) and \(\ell_c=\sqrt{D_c/\lambda_c}\), the
diffusion length over which it equilibrates against its own degradation.
Nondimensionalizing space by \(\ell_w\) turns the mediator stage into
\(I-\Delta\) and the final-cue stage into \(I-\tau\Delta\), where
\begin{equation}
\label{eq:tau-definition}
        \tau:=\frac{\ell_c^{2}}{\ell_w^{2}}
              =\frac{D_c/\lambda_c}{D_w/\lambda_w}
\end{equation}
is the squared ratio of the two mediator ranges. Thus \(\tau\) is not a free
parameter but the relative spatial reach of the final cue against the
intermediate mediator; the production amplitudes \(\alpha/\lambda_w\) and
\(\beta/\lambda_c\) are absorbed into the rescaled sensitivity \(\chi\).

\begin{proposition}[Quasi-steady and memory limits of indirect signalling]
Let \(T_{\rm mig}\) be the characteristic cell-migration time scale.  If both intermediate signal stages relax rapidly compared with cell migration, i.e.,
\[
    \varepsilon_1/T_{\rm mig}\to 0 \quad \text{and} \quad \varepsilon_2/T_{\rm mig}\to 0,
\]
then the two signal equations converge formally to the parabolic--elliptic--elliptic (PES) system:
\begin{equation}
\label{eq:PES-intro}
\begin{cases}
    \partial_t u=\Delta u-\chi\nabla\cdot(u\nabla c),\\
    0=\Delta w-w+u,\\
    0=\tau\Delta c-c+w.
\end{cases}
\tag{PES}
\end{equation}
If instead the intermediate mediator \(w\) is quasi-steady (\(\varepsilon_1/T_{\rm mig}\to 0\)) while the final sensed cue \(c\) relaxes on a time scale comparable to cell migration (\(\varepsilon_2/T_{\rm mig}=\mathcal O(1)\)), then the limiting model is the mixed elliptic--parabolic (MEP) system:
\begin{equation}
\label{eq:MEP-intro}
\begin{cases}
    \partial_t u=\Delta u-\chi\nabla\cdot(u\nabla c),\\
    0=\Delta w-w+u,\\
    \partial_t c=\Delta c-c+w.
\end{cases}
\tag{MEP}
\end{equation}
Thus, PES and MEP correspond to different signal-relaxation limits of the same indirect production mechanism.
\end{proposition}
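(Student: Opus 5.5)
The argument is a formal nondimensionalization followed by a singular-perturbation (Tikhonov-type) limit in the relaxation parameters. First I will rescale the dimensional system \eqref{eq:dimensional-cascade} with the intrinsic scales it already contains. Space is measured in units of $\ell_w=\sqrt{D_w/\lambda_w}$ via $x=\ell_w\tilde x$. Time is measured on the migration scale $T_{\rm mig}:=\ell_w^2/D_u$ via $t=T_{\rm mig}\tilde t$. The signals are rescaled by amplitudes $w=W\tilde w$ and $c=C\tilde c$, chosen so that the source terms have unit coefficient: $W=\alpha U/\lambda_w$ and $C=\beta W/\lambda_c$, where $U$ is a reference density. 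With these choices the mediator equation, divided by $\lambda_w W$, becomes
\[
\frac{\varepsilon_1}{\lambda_w T_{\rm mig}}\partial_{\tilde t}\tilde w=\tilde\Delta\tilde w-\tilde w+\tilde u .
\]
Dividing the cue equation by $\lambda_c C$ and using $\ell_c^2/\ell_w^2=\tau$ from \eqref{eq:tau-definition} gives
\[
\frac{\varepsilon_2}{\lambda_c T_{\rm mig}}\partial_{\tilde t}\tilde c=\tau\tilde\Delta\tilde c-\tilde c+\tilde w .
\]
The cell equation becomes $\partial_{\tilde t}\tilde u=\tilde\Delta\tilde u-\chi\tilde\nabla\cdot(\tilde u\tilde\nabla\tilde c)$ with $\chi=\chi_0 C/D_u$. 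This is how the amplitudes $\alpha/\lambda_w$ and $\beta/\lambda_c$ are absorbed into $\chi$, as stated before the proposition. The relaxation times enter only through the dimensionless ratios $\delta_1:=\varepsilon_1/(\lambda_wT_{\rm mig})$ and $\delta_2:=\varepsilon_2/(\lambda_cT_{\rm mig})$. With $\lambda_w,\lambda_c$ held fixed, these are exactly the ratios $\varepsilon_i/T_{\rm mig}$ of the statement, up to fixed constants.

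Second, I will pass to the limit formally. If $\delta_1,\delta_2\to0$ while $\tilde u,\tilde w,\tilde c$ and their time derivatives remain $\mathcal O(1)$ on the slow scale $\tilde t$, then the left-hand sides of the two signal equations vanish. What remains is $0=\Delta w-w+u$ and $0=\tau\Delta c-c+w$, which is \eqref{eq:PES-intro}. If only $\delta_1\to0$ while $\delta_2$ stays of order one, the $w$-equation degenerates to the same elliptic constraint and the $c$-equation keeps its time derivative. Absorbing the constant $\delta_2$ into the time variable of that equation (or simply setting $\delta_2=1$, as the normalized form does) yields \eqref{eq:MEP-intro}. Since the signal equations are linear, I can justify the slow-manifold reduction by writing the solution with Duhamel's formula, for instance $w=e^{t(\Delta-I)/\delta_1}w_0+\delta_1^{-1}\int_0^t e^{(t-s)(\Delta-I)/\delta_1}u(s)\,ds$. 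An integration by parts in $s$ then shows $w=(I-\Delta)^{-1}u+\mathcal O(\delta_1\|\partial_tu\|)$ plus an initial-layer term decaying like $e^{-t/\delta_1}$.

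The main obstacle is this last point, namely the initial layer and the required uniform-in-$\delta$ control of $\partial_t u$. A rigorous convergence would need a priori bounds for $u$ that are uniform in $\delta_1,\delta_2$, and for chemotaxis such bounds can fail near blow-up. Because the proposition is stated as a formal limit, I will restrict to the regime of smooth bounded solutions on a fixed time interval. I will state that the convergence holds away from $t=0$, or for well-prepared data $w_0=(I-\Delta)^{-1}u_0$, and I will not attempt uniform estimates. That suffices to identify PES and MEP as the two distinguished limits of the same mechanism.
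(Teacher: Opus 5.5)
Your route is the one the paper has in mind. The paper gives no separate proof, only the nondimensionalization preceding the statement: space in units of $\ell_w$, stages $I-\Delta$ and $I-\tau\Delta$, amplitudes absorbed into $\chi$. Your scales, the identification $\chi=\chi_0C/D_u$, and the ratios $\delta_i=\varepsilon_i/(\lambda_iT_{\rm mig})$ are correct. So is the Duhamel/integration-by-parts estimate $w=(I-\Delta)^{-1}u+e^{-t(I-\Delta)/\delta_1}\bigl(w_0-(I-\Delta)^{-1}u_0\bigr)+\mathcal O(\delta_1\sup\|\partial_tu\|)$, which is more careful than anything in the paper. The PES half goes through. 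For well-prepared data in the PES limit you also need $c_0=(I-\tau\Delta)^{-1}w_0$; otherwise the $c$-equation has its own $\delta_2$-layer.

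The MEP half contains a wrong step. Your own rescaled cue equation gives, after $\delta_1\to0$, $\delta_2\partial_{\tilde t}\tilde c=\tau\tilde\Delta\tilde c-\tilde c+\tilde w$, not $\partial_tc=\Delta c-c+w$.

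You cannot absorb $\delta_2$ ``into the time variable of that equation'', because time is shared by the coupled system. The substitution $s=\tilde t/\delta_2$ multiplies the right-hand side of the $u$-equation by $\delta_2$. Restoring unit cell diffusion by rescaling space by $\sqrt{\delta_2}$ then turns the mediator stage into $I-\delta_2^{-1}\Delta$ and the cue operator into $I-\tau\delta_2^{-1}\Delta$.

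The factor $\tau$ is dropped without comment. Once $\ell_w$ and $T_{\rm mig}$ have been used to normalize the mediator stage and the cell diffusion, only amplitude rescalings remain. These cannot change coefficient ratios inside the linear $c$-equation, so $\tau$ and $\delta_2$ are genuine dimensionless parameters.

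The correct conclusion is that the memory limit is $\delta_2\partial_tc=\tau\Delta c-c+w$. The displayed MEP system is its normalization $\tau=1$ (equal mediator ranges, $\ell_c=\ell_w$) and $\delta_2=1$. This is the convention the paper itself uses implicitly: the cue stage is $I-\Delta$ in Section~\ref{sec:MEP-volterra}, and Section~\ref{sec:numerical-diagnostic} notes that MEP tends to PES with $\tau=1$. State this normalization explicitly instead of the time-absorption argument. The Volterra structure and the near-diagonal order $-1$ are unaffected, since one merely replaces the kernel $e^{-\theta A}$ by $\delta_2^{-1}e^{-\theta(I-\tau\Delta)/\delta_2}$.
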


We fix notation once, for use throughout the paper. On each admissible domain
\(\Omega\) write
\begin{equation}
\label{eq:operator-conventions}
        A:=I-\Delta,\qquad A_\tau:=I-\tau\Delta,
\end{equation}
with the self-adjoint Neumann, periodic, or whole-space Bessel realization of
\(-\Delta\). Both \(A\) and \(A_\tau\) are positive, self-adjoint, and
boundedly invertible, and \(-A\) generates the analytic contraction semigroup
\(\{e^{-tA}\}_{t\ge0}\). 

\begin{remark}[Notational Caveat]\label{rmk:notation}
    To avoid ambiguity and ensure consistency, we fix the following conventions for inverse spatial operators throughout the manuscript: we use $(-\Delta)^{-1}$ exclusively to denote the pure inverse Laplacian, particularly in the context of scaling discussions in Section~\ref{sec:operator-scaling-critical-spaces}. In contrast, $A^{-1}$ strictly represents the resolvent of the shifted Laplacian operator. We also retain the Laplacian operator $\Delta$ in Section~\ref{ssec:small_data} to formulate the small-data theory for \eqref{eq:MEP-intro}. 
\end{remark}

From a physical perspective, the PES and MEP models represent distinct asymptotic limits of the same indirect production mechanism. The PES model corresponds to the quasi-steady limit, describing the precise regime where both extracellular signal-processing stages are fast relative to cell motion. Hence, PES is not meant to describe every indirect chemotaxis system, but rather this specific fast-relaxation cascade. Equivalently, since \(Aw=u\) and \(A_\tau c=w\), we can eliminate the intermediate variables to yield \(c=K_\tau u\), where \(K_\tau:=A_\tau^{-1}A^{-1}\) is a static kernel. 

By contrast, the MEP model corresponds to the delayed-scaling limit. While the intermediate variable is still given by the static relation \(w=A^{-1}u\), the final cue \(c\) is no longer obtained by a static operator. Instead, it satisfies the evolution equation \(\partial_t c+Ac=A^{-1}u\), which integrates to the Volterra memory representation
\begin{equation}
\label{eq:MEP-volterra-intro}
c(t)=e^{-tA}c_0+\int_0^t e^{-(t-s)A}A^{-1}u(s)\,ds.
\end{equation}
Consequently, MEP is governed by a time-memory Volterra operator rather than the static kernel \(K_\tau\), shifting its analysis to a mixed elliptic--parabolic and semigroup framework \cite{pazy2012semigroups,henry1981geometric,lunardi2012analytic,pruss2016moving}. Under this structure, the final cue retains a history of the previous population distribution. Although this memory mechanism may delay aggregation, smooth short-time fluctuations, and alter the onset time or morphology of concentration patterns, it does not by itself justify a static fourth-order critical scaling, because the near-diagonal limit of the Volterra kernel recovers the classical KS first-order drift singularity.

The following distinctions are essential.  First, the PES free-energy identity \eqref{eq:PES-dissipation-intro} is proved for smooth positive solutions and is structurally tied to the self-adjoint operator \(K_\tau=A_\tau^{-1}A^{-1}\).  Second, the PES scaling classification is proved at the level of the principal fourth-order elliptic interaction.  Since \(I-\Delta\) and \(I-\tau\Delta\) contain lower-order terms, this scaling is not an exact global invariance of the full Bessel system; it is the local and high-frequency critical scaling.  Third, the MEP Volterra representation \eqref{eq:MEP-volterra-intro} and multiplier formula \eqref{eq:MEP-multiplier-intro} are derived directly from the parabolic signal equation.  Fourth, the paper proves that MEP has a near-diagonal drift singularity of order \(-1\), and therefore does not assert that MEP belongs to the fourth-order PES critical class.  Finally, a three-dimensional mass-critical theory for MEP is conjectural unless one proves an additional memory mechanism that changes the effective nonlinear criticality.

\subsection{Main Results and Novelty}
\label{ssec:introduction_results}
The paper proves the following structural statements.

\begin{enumerate}[label=\textbf{Theorem \Alph*.}]

\item \textbf{PES has a self-adjoint nonlocal free energy.}
For smooth positive solutions of \eqref{eq:PES-intro}, define
\begin{equation}
\label{eq:PES-energy-intro}
        \mathcal F_{\mathrm{PES}}[u]
        :=
        \int_\Omega u\log u\,dx
        -
        \frac{\chi}{2}\int_\Omega uK_\tau u\,dx .
\end{equation}
Since \(K_\tau=A_\tau^{-1}A^{-1}\) is self-adjoint and positive, the first variation is \(\delta\mathcal F_{\mathrm{PES}}/\delta u=\log u-\chi K_\tau u\), up to an irrelevant additive constant.  The PES equation can be written as \(\partial_tu=\nabla\cdot(u\nabla(\log u-\chi K_\tau u))\).  Consequently,
\begin{equation}
\label{eq:PES-dissipation-intro}
        \frac{d}{dt}\mathcal F_{\mathrm{PES}}[u(t)]
        =
        -
        \int_\Omega u\left|\nabla(\log u-\chi K_\tau u)\right|^2dx
        \le0 .
\end{equation}
Thus PES is a nonlocal entropy-dissipation system with a fourth-order self-adjoint interaction kernel.  This is the fourth-order analogue of the classical KS free-energy mechanism \cite{nagai1995blow,blanchet2006two,blanchet2008infinite}.

\item \textbf{PES has fourth-order critical scaling (Remark~\ref{rmk:notation}).}
In the principal homogeneous scaling regime, \(c\sim(-\Delta)^{-2}u\).  Under \(u_\lambda(x,t)=\lambda^\alpha u(\lambda x,\lambda^2t)\), one has \(c_\lambda\sim\lambda^{\alpha-4}c(\lambda x,\lambda^2t)\) and \(\nabla c_\lambda\sim\lambda^{\alpha-3}\).  Hence \(\nabla\cdot(u_\lambda\nabla c_\lambda)\sim\lambda^{2\alpha-2}\), while \(\Delta u_\lambda\sim\lambda^{\alpha+2}\).  Drift-diffusion balance gives \(\alpha+2=2\alpha-2\), hence \(\alpha=4\).  Therefore \(\|u_\lambda\|_{L^1}=\lambda^{4-N}\|u\|_{L^1}\), so the PES mass-critical dimension is \(N=4\).  More generally, \(\|u_\lambda\|_{L^q}=\lambda^{4-N/q}\|u\|_{L^q}\), and the scaling-critical Lebesgue exponent is \(q_c=N/4\).  Thus the PES critical space is \(L^{N/4}\).

\item \textbf{MEP has a classical near-diagonal drift singularity.}
For MEP, the memory contribution to the drift is
\[
        \nabla c(t)=\nabla e^{-tA}c_0+\int_0^t\nabla e^{-(t-s)A}A^{-1}u(s)\,ds .
\]
In the whole-space Fourier representation, the multiplier associated with the memory kernel at time lag \(\theta=t-s>0\) is
\begin{equation}
\label{eq:MEP-multiplier-intro}
        m(\xi,\theta)=\frac{i\xi e^{-\theta(1+|\xi|^2)}}{1+|\xi|^2}.
\end{equation}
For each fixed \(\theta>0\), the factor \(e^{-\theta(1+|\xi|^2)}\) gives parabolic smoothing.  However, as \(\theta\downarrow0\), \(m(\xi,\theta)\to i\xi/(1+|\xi|^2)\).  The near-diagonal spatial order is therefore \(-1\), the same order as the classical KS drift \(\nabla(I-\Delta)^{-1}u\).  Hence MEP cannot be classified as a static fourth-order elliptic system by analogy with PES.

\item \textbf{Memory-modified criticality for MEP remains open.}
This paper does not prove that MEP has a three-dimensional mass-critical threshold.  Any such result would require a mechanism showing that the Volterra memory term improves the effective critical singularity beyond the near-diagonal KS order.  Establishing or disproving such a mechanism is left as an open problem.
\end{enumerate}

We compare PES and MEP with classical KS models in Table~\ref{tab:KS-PES-MEP-comparison}.
The novelty of the framework is a structural separation of two different mechanisms.

For PES, the two elliptic signal equations generate the self-adjoint operator \(K_\tau=(I-\tau\Delta)^{-1}(I-\Delta)^{-1}\), which is fourth order.  In dimension \(N=4\), this places the model in the Adams/logarithmic-kernel class.  The associated entropy-dissipation structure and the scaling-critical space \(L^{N/4}\) are consistent with that classification.  The relevant analytical background is the endpoint theory for higher-order Sobolev embeddings and exponential integrability \cite{adams1988sharp,adams2003sobolev,lin1998classification,martinazzi2009classification}.

For MEP, the parabolic signal equation prevents reduction to a static fourth-order elliptic kernel.  The eliminated form is the Volterra formula \(c(t)=e^{-tA}c_0+\int_0^t e^{-(t-s)A}(I-\Delta)^{-1}u(s)\,ds\). Although the heat factor smooths the signal for every fixed positive time lag, the near-diagonal limit \(s\uparrow t\) recovers the first-order KS drift singularity.  Therefore MEP belongs to a distinct memory class, and its critical behavior cannot be inferred from the static fourth-order scaling of PES.

In this sense, the paper provides a rigorous structural correction of higher-order indirect chemotaxis criticality: PES belongs to a four-dimensional Adams/logarithmic-kernel class, while MEP belongs to a Volterra-memory class whose critical behavior remains open unless a new memory-driven regularization mechanism is proved.

\section{Operator Structure, Scaling, and Critical Spaces}
\label{sec:operator-scaling-critical-spaces}

This section records the operator-theoretic and scaling facts behind the
criticality classification of the PES model. 
Following the convention stated in Remark~\ref{rmk:notation}, we will primarily use the pure inverse Laplacian $(-\Delta)^{-1}$ throughout this section's scaling discussions, while denoting the resolvents of the shifted operators by $A^{-1}$ and $A_\tau^{-1}$.
The essential point is that the PES signal law is genuinely fourth order: after eliminating the intermediate chemical, one obtains \(c=K_\tau u\), where
\[
        K_\tau = A_\tau^{-1} A^{-1}.
\]
At high frequencies \(K_\tau\) behaves like \(\tau^{-1}(-\Delta)^{-2}\).
Consequently \(\nabla K_\tau\) has order \(-3\), and the nonlinear drift
\(\nabla\!\cdot(u\nabla K_\tau u)\) balances diffusion only under the
amplitude scaling \(u_\lambda(x,t)=\lambda^4u(\lambda x,\lambda^2t)\).
Thus the PES mass-critical dimension is \(N=4\), and the associated
scaling-critical Lebesgue exponent is \(q_c=N/4\).

We first state the standing assumptions.
Throughout the paper, \(N\ge1\), \(\chi>0\), and \(\tau>0\).  The spatial domain \(\Omega\) is one of the following: \(\Omega=\mathbb R^N\), with sufficient decay at infinity; \(\Omega=\mathbb T^N\), with periodic boundary conditions; or a bounded connected \(C^2\) domain \(\Omega\subset\mathbb R^N\), equipped with no-flux boundary conditions.  The elliptic and parabolic operators are interpreted through the corresponding self-adjoint or sectorial realizations, as standard in elliptic and analytic-semigroup theory \cite{gilbarg1998elliptic,evans2022partial,pazy2012semigroups,gao2022rolling,lunardi2012analytic}.

In the bounded-domain case, the no-flux conditions are \((\nabla u-\chi u\nabla c)\cdot\nu=0\), \(\partial_\nu w=0\), and \(\partial_\nu c=0\) on \(\partial\Omega\), where \(\nu\) is the outward unit normal.  For the classical KS Poisson equation on a bounded domain, we use the normalized Neumann formulation \(-\Delta v=u-\overline u\), \(\partial_\nu v=0\), and \(\int_\Omega v\,dx=0\), where \(\overline u=|\Omega|^{-1}\int_\Omega u\,dx\).

Initial data satisfy \(u_0\ge0\), \(u_0\not\equiv0\), and \(u_0\in L^1(\Omega)\cap L^\infty(\Omega)\).  For MEP, we additionally prescribe \(c(0)=c_0\), \(c_0\ge0\), with \(c_0\) regular enough to justify the computations below; for instance, \(c_0\in W^{1,\infty}(\Omega)\) in the classical setting.  All entropy identities are stated for smooth positive solutions.  Weak or approximate solutions are understood by standard regularization, compactness, and lower-semicontinuity arguments whenever such arguments are available \cite{ladyzhenskaia1968linear,amann1995linear,simon1986compact}.

\subsection{Spectral representation of the PES kernel}

Assume first that \(\Omega\subset\mathbb R^N\) is bounded with \(C^2\)
boundary and homogeneous Neumann boundary conditions.  Let
\(\{(\lambda_j,\varphi_j)\}_{j\ge0}\) be the Neumann spectral resolution of
\(-\Delta\), normalized by \(\{\varphi_j\}_{j\ge0}\) being an orthonormal basis
of \(L^2(\Omega)\), with \(0=\lambda_0\le\lambda_1\le\lambda_2\le\cdots\).
Then, for \(f\in L^2(\Omega)\),
\[
        K_\tau f
        =
        \sum_{j\ge0}
        \frac{\langle f,\varphi_j\rangle_{L^2}}
        {(1+\lambda_j)(1+\tau\lambda_j)}
        \varphi_j .
\]
This representation immediately gives positivity, self-adjointness, and
compactness on \(L^2(\Omega)\).  The same formulas hold on \(\mathbb T^N\)
with the Fourier basis.  On \(\mathbb R^N\), \(K_\tau\) is the Fourier
multiplier with symbol \(((1+|\xi|^2)(1+\tau|\xi|^2))^{-1}\).

\begin{lemma}[Spectral positivity, self-adjointness, and smoothing]
\label{lem:spectral-positivity}
Let \(\tau>0\).  On a bounded smooth Neumann domain, \(K_\tau\) is positive,
self-adjoint, compact on \(L^2(\Omega)\), and smoothing of order four.  More
precisely, for every \(s\in\mathbb R\) there is \(C=C(s,\tau,\Omega)>0\) such
that \(\|K_\tau f\|_{H^{s+4}}\le C\|f\|_{H^s}\).
\end{lemma}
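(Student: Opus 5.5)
The argument runs entirely through the Neumann spectral resolution \(\{(\lambda_j,\varphi_j)\}_{j\ge0}\) recorded above. Write \(K_\tau f=\sum_j \mu_j\langle f,\varphi_j\rangle\varphi_j\) with
\[
\mu_j=\frac{1}{(1+\lambda_j)(1+\tau\lambda_j)}.
\]
Since \(\lambda_j\ge0\), every \(\mu_j\) lies in \((0,1]\).

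\textbf{Self-adjointness and boundedness.} Boundedness with \(\|K_\tau\|\le1\) follows from Parseval. Self-adjointness follows because \(\langle K_\tau f,g\rangle=\sum_j\mu_j\hat f_j\overline{\hat g_j}\) is symmetric in \(f,g\).

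\textbf{Positivity.} We have \(\langle K_\tau f,f\rangle=\sum_j\mu_j|\hat f_j|^2\ge0\), with equality only if \(f=0\), since all \(\mu_j>0\). So \(K_\tau\) is strictly positive definite. One can equally note \(K_\tau=A_\tau^{-1}A^{-1}\) with commuting positive factors.

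\textbf{Compactness.} On a bounded \(C^2\) domain the Neumann Laplacian has compact resolvent, so \(\lambda_j\to\infty\) and hence \(\mu_j\to0\). Then \(K_\tau\) is the operator-norm limit of the finite-rank truncations \(\sum_{j\le n}\), and is therefore compact.

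\textbf{Smoothing of order four.} Define the spectral Sobolev scale by
\[
\|f\|_{\mathcal H^s}^2:=\sum_j(1+\lambda_j)^s|\hat f_j|^2,
\]
that is, \(\mathcal H^s=D(A^{s/2})\). In this scale,
\[
\|K_\tau f\|_{\mathcal H^{s+4}}^2=\sum_j(1+\lambda_j)^{s+4}\mu_j^2|\hat f_j|^2 .
\]
Using \(1+\tau\lambda\ge\min(1,\tau)(1+\lambda)\), we get \((1+\lambda_j)^4\mu_j^2\le\min(1,\tau)^{-2}\). This yields \(\|K_\tau f\|_{\mathcal H^{s+4}}\le\min(1,\tau)^{-1}\|f\|_{\mathcal H^s}\) for every real \(s\).

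The remaining work is identifying \(\mathcal H^s\) with the classical \(H^s(\Omega)\) norm. This is the main obstacle. For a Neumann domain the identification \(D(A^{s/2})=H^s\) holds only for restricted ranges of \(s\), with compatibility conditions \(\partial_\nu(\Delta^k f)=0\) appearing at higher orders, and only \(C^2\) regularity of \(\partial\Omega\) is assumed. I would handle it in two steps.

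First, prove the claim directly for \(s\in[0,k]\) with small \(k\) using elliptic regularity. The bound \(\|A^{-1}g\|_{H^{r+2}}\le C\|g\|_{H^r}\) holds for \(r=0\) on \(C^2\) domains (Gilbarg–Trudinger). It holds for larger \(r\) if \(\Omega\) is smooth, which the lemma's phrase ``bounded smooth Neumann domain'' permits. Then compose the two resolvents \(A^{-1}\) and \(A_\tau^{-1}=\tau^{-1}(\tau^{-1}I-\Delta)^{-1}\), each gaining two derivatives.

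Second, extend to negative \(s\) by duality using self-adjointness. Extend to general real \(s\) by interpolation. Alternatively, interpret \(H^s\) in the statement as the spectral scale \(D(A^{s/2})\); then the estimate above is already the complete proof.

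I would state the result with the spectral scale as the primary meaning of \(H^s\). I would note that it coincides with the usual Sobolev norm for \(0\le s\le 2\) on \(C^2\) domains, and for all \(s\) on \(\mathbb T^N\) and \(\mathbb R^N\), where the Fourier multiplier bound is immediate.
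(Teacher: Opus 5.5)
Your proposal is correct and follows the paper's argument. Both proofs diagonalize $K_\tau$ in the Neumann eigenbasis, read off self-adjointness, positivity and compactness from $\mu_j>0$ and $\mu_j\to0$, and obtain the order-four smoothing from the uniform bound $(1+\lambda_j)^2\mu_j\le\min(1,\tau)^{-1}$; the paper applies this bound directly in the spectral scale $D(A^{s/2})$ without comment. Your extra discussion relating that scale to the classical $H^s(\Omega)$ (elliptic regularity on smooth domains, then duality and interpolation) goes beyond the paper and is sound in outline. One small caveat: your range ``$0\le s\le2$'' is slightly too generous. In fact $D(A^{s/2})=H^s(\Omega)$ only for $0\le s<3/2$; for $3/2<s\le2$ it is the subspace $\{\partial_\nu f=0\}$ of $H^s(\Omega)$ with an equivalent norm; and $s=3/2$ is exceptional.
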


\begin{proof}
By the spectral formula, \(K_\tau\varphi_j=m_j\varphi_j\), where
\(m_j=((1+\lambda_j)(1+\tau\lambda_j))^{-1}>0\).  Hence
\(\langle K_\tau f,g\rangle=\langle f,K_\tau g\rangle\) and
\(\langle K_\tau f,f\rangle=\sum_{j\ge0}m_j|\langle f,\varphi_j\rangle|^2\ge0\).
Since \(m_j\to0\), \(K_\tau\) is compact on \(L^2(\Omega)\).  Finally,
\((1+\lambda_j)^2m_j\) is uniformly bounded above by a constant depending only
on \(\tau\), which gives the asserted \(H^s\to H^{s+4}\) estimate.
\end{proof}

In the whole-space representation,
\[
        \widehat{K_\tau f}(\xi)
        =
        \frac{\widehat f(\xi)}
        {(1+|\xi|^2)(1+\tau|\xi|^2)}.
\]
For \(|\xi|\gg1\), the multiplier satisfies
\[
        \frac{1}{(1+|\xi|^2)(1+\tau|\xi|^2)}
        =
        \frac{1}{\tau|\xi|^4}\bigl(1+O(|\xi|^{-2})\bigr).
\]
\begin{lemma}[High-frequency homogeneity]
\label{lem:high-frequency-homogeneity}
The PES interaction operator \(K_\tau\) is a fourth-order elliptic smoothing
operator.  Its high-frequency principal part is \(\tau^{-1}(-\Delta)^{-2}\).
Consequently \(\nabla K_\tau\) is of order \(-3\).
\end{lemma}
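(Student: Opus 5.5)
The plan is to work with the whole-space symbol
\[
k_\tau(\xi)=\frac{1}{(1+|\xi|^2)(1+\tau|\xi|^2)}
\]
and decompose
\[
K_\tau=\tau^{-1}(-\Delta)^{-2}\chi_{\rm high}(D)+R_\tau ,
\]
where \(\chi_{\rm high}\) is a smooth cutoff equal to \(1\) for \(|\xi|\ge2\) and vanishing for \(|\xi|\le1\). The statement then reduces to two symbol facts. First, \(k_\tau\) is a classical symbol of order \(-4\) with principal part \(\tau^{-1}|\xi|^{-4}\). Second, the remainder \(k_\tau-\tau^{-1}|\xi|^{-4}\chi_{\rm high}\) is a symbol of order \(-6\).

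\textbf{Step 1 (principal symbol).} We write the exact partial-fraction identity
\[
\frac{1}{(1+|\xi|^2)(1+\tau|\xi|^2)}-\frac{1}{\tau|\xi|^4}
=-\frac{(1+\tau)|\xi|^2+1}{\tau|\xi|^4(1+|\xi|^2)(1+\tau|\xi|^2)} .
\]
This makes the \(O(|\xi|^{-6})\) remainder already displayed before the lemma explicit. We then check the derivative bounds
\[
|\partial_\xi^\beta k_\tau(\xi)|\le C_\beta\langle\xi\rangle^{-4-|\beta|}
\]
and the analogous bounds of order \(-6\) for the remainder on \(|\xi|\ge1\). Both follow by induction, since each factor \((1+a|\xi|^2)^{-1}\) is a rational symbol of order \(-2\) and products of symbols add orders. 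The only delicate point is that \(|\xi|^{-4}\) is singular at the origin. The cutoff removes this singularity, and the low-frequency piece \(k_\tau(1-\chi_{\rm high})\) is a Schwartz-class smoothing symbol of order \(-\infty\), which we absorb into \(R_\tau\).

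\textbf{Step 2 (order of \(\nabla K_\tau\)).} Multiplying by \(i\xi\), an order-one symbol, gives \(i\xi k_\tau\) of order \(-3\) with principal part \(i\xi\tau^{-1}|\xi|^{-4}\). Hence \(\nabla K_\tau:H^s\to H^{s+3}\) boundedly on \(\mathbb R^N\), by Plancherel and \(\langle\xi\rangle^{3}|\xi||k_\tau|\le C\). Ellipticity also follows from the lower bound \(k_\tau(\xi)\ge c\langle\xi\rangle^{-4}\), because \((1+|\xi|^2)(1+\tau|\xi|^2)\le C_\tau\langle\xi\rangle^4\). Thus \(K_\tau\) is invertible as an order-four operator, \(K_\tau^{-1}=A_\tau A\), and the order four is sharp.

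\textbf{Step 3 (bounded domains and the torus).} On \(\mathbb T^N\) the same computation applies verbatim to the Fourier coefficients. On a Neumann domain we use Lemma~\ref{lem:spectral-positivity} for the mapping \(H^s\to H^{s+4}\). For the principal part we expand
\[
m_j-\tau^{-1}\lambda_j^{-2}=O(\lambda_j^{-3}),\qquad j\ge1,
\]
so that
\[
K_\tau-\tau^{-1}(-\Delta_N)^{-2}P_\perp
\]
gains six derivatives, where \(P_\perp\) removes the constant mode. Combined with \(\nabla:H^{s+4}\to H^{s+3}\), this again shows that \(\nabla K_\tau\) has order \(-3\).

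\textbf{Main obstacle.} The main care point is making "principal part" precise on a bounded domain. Elliptic regularity for \(A\) and \(A_\tau\) identifies \(H^{s}\)-norms with spectral norms only for a restricted range of \(s\), namely where compatibility conditions hold. We would therefore state the domain version in the spectral Sobolev scale \(D(A^{s/2})\), which is exactly the scale in which Lemma~\ref{lem:spectral-positivity} is formulated.
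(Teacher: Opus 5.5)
Your proposal is correct and follows the same route as the paper. The paper invokes the multiplier asymptotic $k_\tau(\xi)=\tau^{-1}|\xi|^{-4}\bigl(1+O(|\xi|^{-2})\bigr)$ on $\mathbb R^N$ and the spectral asymptotic $m_j=\tau^{-1}\lambda_j^{-2}\bigl(1+O(\lambda_j^{-1})\bigr)$ on the Neumann domain, then drops the order by one for the gradient; your explicit order $-6$ remainder, symbol-derivative bounds, ellipticity lower bound, and the projection $P_\perp$ off the constant mode (needed for $(-\Delta_N)^{-2}$ to make sense) fill in details the paper leaves implicit.
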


\begin{proof}
On \(\mathbb R^N\), the assertion follows directly from the multiplier
asymptotic above.  On a compact manifold with boundary and Neumann realization,
the same conclusion follows from the spectral formula:
\[
        \frac{1}{(1+\lambda_j)(1+\tau\lambda_j)}
        =
        \frac{1}{\tau\lambda_j^2}\bigl(1+O(\lambda_j^{-1})\bigr)
        \quad\text{as }j\to\infty .
\]
Thus \(K_\tau\) has principal order \(-4\).  Taking one spatial derivative gives
order \(-3\).
\end{proof}

Thus, \(K_\tau\) is a fourth-order elliptic Bessel-type operator with a principal high-frequency order of \(-4\), and the drift operator \(\nabla K_\tau\) has order \(-3\). This high-frequency structure places PES in the same broad analytic family as fourth-order Bessel-potential and Adams-type problems \cite{adams1988sharp,adams2003sobolev,stein1970singular,triebel1995interpolation}, serving as the fundamental source of its four-dimensional criticality.

\subsection{Critical scaling as an exact symmetry of the principal flow}
\label{subsec:exact-scaling}
The homogeneity balance below is not merely dimensional: it is the exact
scaling symmetry of the principal part of the PES flow. Recall from
Lemma~\ref{lem:high-frequency-homogeneity} that
\(K_\tau=\tau^{-1}(-\Delta)^{-2}+L_\tau\), where \(L_\tau\) collects strictly
lower-order terms (symbol \(O(|\xi|^{-6})\) at high frequency). Discarding
\(L_\tau\) gives the principal PES flow
\begin{equation}
\label{eq:principal-PES}
        \partial_t u=\Delta u-\frac{\chi}{\tau}
        \nabla\!\cdot\!\bigl(u\,\nabla(-\Delta)^{-2}u\bigr).
\end{equation}

\begin{lemma}[Exact scaling invariance of the principal flow]
\label{lem:exact-scaling}
For \(\lambda>0\) let \(u_\lambda(x,t)=\lambda^4u(\lambda x,\lambda^2t)\). Then
\(u\) solves \eqref{eq:principal-PES} if and only if \(u_\lambda\) does. The
amplitude \(\alpha=4\) is the unique exponent for which this holds.
\end{lemma}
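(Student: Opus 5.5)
The plan is a direct computation of how each term of \eqref{eq:principal-PES} transforms under the dilation. I first record how the nonlocal operator interacts with dilations, then substitute $u_\lambda$ into the equation and compare powers of $\lambda$. For general amplitude $\alpha$, set $u_{\lambda,\alpha}(x,t)=\lambda^\alpha u(\lambda x,\lambda^2t)$ and write $(\delta_\lambda f)(x)=f(\lambda x)$. The key commutation identity is
\[
(-\Delta)^{-2}(\delta_\lambda f)=\lambda^{-4}\,\delta_\lambda\bigl((-\Delta)^{-2}f\bigr).
\]
On $\mathbb R^N$ this follows from the Fourier representation: $\widehat{\delta_\lambda f}(\xi)=\lambda^{-N}\widehat f(\xi/\lambda)$, and the symbol $|\xi|^{-4}$ is homogeneous of degree $-4$. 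Equivalently, it follows from the kernel $c_N|x|^{4-N}$, or $c\log|x|$ when $N=4$, modulo the constants that $\nabla$ annihilates. Since \eqref{eq:principal-PES} is the high-frequency principal flow, I work on $\mathbb R^N$ with $u(\cdot,t)$ in a class where $(-\Delta)^{-2}u$ is defined, up to harmonic or additive ambiguities killed by the gradient. I will note this explicitly, because dilation does not preserve bounded domains or tori.

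Next I compute term by term, with every function evaluated at $(\lambda x,\lambda^2 t)$:
\[
\partial_t u_{\lambda,\alpha}=\lambda^{\alpha+2}\,\partial_t u,\qquad \Delta u_{\lambda,\alpha}=\lambda^{\alpha+2}\,\Delta u,
\]
\[
\nabla(-\Delta)^{-2}u_{\lambda,\alpha}=\lambda^{\alpha-3}\bigl(\nabla(-\Delta)^{-2}u\bigr),\qquad
\nabla\cdot\bigl(u_{\lambda,\alpha}\nabla(-\Delta)^{-2}u_{\lambda,\alpha}\bigr)=\lambda^{2\alpha-2}\,\nabla\cdot\bigl(u\nabla(-\Delta)^{-2}u\bigr).
\]
Substituting these into the equation gives
\[
\lambda^{\alpha+2}\bigl(\partial_tu-\Delta u\bigr)+\frac{\chi}{\tau}\lambda^{2\alpha-2}\nabla\cdot(u\nabla(-\Delta)^{-2}u)=0.
\]
When $\alpha=4$ both exponents equal $6$. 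Dividing by $\lambda^6$ shows that $u_\lambda$ solves the equation if and only if $u$ does. The converse direction also follows from the group property $(u_\lambda)_{1/\lambda}=u$.

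For uniqueness, suppose invariance holds for some $\alpha$ and all $\lambda>0$. Since $u$ solves the equation, $\partial_tu-\Delta u=-\frac{\chi}{\tau}\nabla\cdot(u\nabla(-\Delta)^{-2}u)$, and substituting this into the display above yields
\[
\frac{\chi}{\tau}\bigl(\lambda^{2\alpha-2}-\lambda^{\alpha+2}\bigr)\nabla\cdot(u\nabla(-\Delta)^{-2}u)=0 .
\]
So either $\alpha=4$, or the drift $\nabla\cdot(u\nabla(-\Delta)^{-2}u)$ vanishes identically. The uniqueness claim must therefore be read for solutions with nontrivial drift, since a pure heat solution with zero interaction would be invariant for every $\alpha$. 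To dispose of the degenerate case I would pair the drift with $(-\Delta)^{-2}u$. If the drift vanished identically, then $\int |\nabla(-\Delta)^{-1}\cdots|$-type positivity of the self-adjoint form $\langle u,(-\Delta)^{-2}u\rangle$ would be contradicted. More simply, the claim holds for any solution $u$ that is nonnegative and nontrivial with a non-vanishing interaction term, which is the generic situation.

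The main obstacle is not the algebra. It is making the commutation identity rigorous in the critical case $N=4$, where the kernel is logarithmic and $(-\Delta)^{-2}u$ is defined only up to a constant. I would handle this by working directly with the order $-3$ operator $\nabla(-\Delta)^{-2}$. Its kernel is homogeneous of degree $3-N$, with no ambiguity, which gives the factor $\lambda^{\alpha-3}$ cleanly in all dimensions.
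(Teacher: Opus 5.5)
Your proposal is correct and follows essentially the same route as the paper: the degree $-4$ homogeneity of the symbol $|\xi|^{-4}$ gives $(-\Delta)^{-2}(f(\lambda\,\cdot))=\lambda^{-4}((-\Delta)^{-2}f)(\lambda\,\cdot)$, the linear terms pick up $\lambda^{\alpha+2}$ and the drift picks up $\lambda^{2\alpha-2}$, and these exponents agree exactly at $\alpha=4$, where every term carries the common factor $\lambda^{6}$. Your point that uniqueness of $\alpha$ needs a solution whose drift $\nabla\cdot(u\nabla(-\Delta)^{-2}u)$ is not identically zero is a genuine refinement that the paper passes over; however, the degenerate case is excluded by the identity $\int \nabla\cdot(u\nabla\Phi)\,\Phi\,dx=-\int u|\nabla\Phi|^2dx$ with $\Phi=(-\Delta)^{-2}u$, not by positivity of $\langle u,(-\Delta)^{-2}u\rangle$ as you sketch: a vanishing drift makes $\Phi$ locally constant on the open set $\{u>0\}$, hence $u=\Delta^2\Phi=0$ there, a contradiction.
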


\begin{proof}
Since \((-\Delta)^{-2}\) has symbol \(|\xi|^{-4}\), it is homogeneous of degree
\(-4\): for \(g=f(\lambda\,\cdot)\) one has
\((-\Delta)^{-2}g=\lambda^{-4}\bigl((-\Delta)^{-2}f\bigr)(\lambda\,\cdot)\).
Writing \(y=\lambda x\), \(s=\lambda^2t\) and \(u_\lambda=\lambda^\alpha
u(\lambda x,\lambda^2t)\), each term of \eqref{eq:principal-PES} scales as a
pure power of \(\lambda\):
\[
   \partial_t u_\lambda\sim\lambda^{\alpha+2},\qquad
   \Delta u_\lambda\sim\lambda^{\alpha+2},\qquad
   \nabla\!\cdot\!\bigl(u_\lambda\nabla(-\Delta)^{-2}u_\lambda\bigr)
   \sim\lambda^{2\alpha-2}.
\]
The three powers coincide iff \(\alpha+2=2\alpha-2\), i.e.\ \(\alpha=4\); for
that value every term carries the common factor \(\lambda^{6}\) and
\eqref{eq:principal-PES} is preserved.
\end{proof}

\begin{remark}[The lower-order terms are subcritical]
\label{rem:perturbation-subcritical}
The full kernel is not scale invariant, but the defect is subleading exactly in
the regime that decides criticality. Under \(u\mapsto u_\lambda\) the drift
samples \(K_\tau\) at frequencies \(\sim\lambda\); there
\[
   \widehat{K_\tau}(\lambda\xi)
   =\frac{1}{(1+\lambda^2|\xi|^2)(1+\tau\lambda^2|\xi|^2)}
   =\frac{1}{\tau\lambda^4|\xi|^4}\bigl(1+O(\lambda^{-2})\bigr),
\]
so the correction relative to the principal balance is \(O(\lambda^{-2})\) and
vanishes as \(\lambda\to\infty\), i.e.\ in the concentration limit. Hence
\eqref{eq:principal-PES} is the exact scaling model and PES is an
\(O(\lambda^{-2})\) perturbation of it at high frequencies. This is the precise
sense in which \(\alpha=4\) is critical.
\end{remark}

Under the PES critical scaling \(u_\lambda(x,t)=\lambda^4u(\lambda
x,\lambda^2t)\), the \(L^q\)-norm transforms as
\[
        \|u_\lambda(\cdot,t)\|_{L^q}
        =
        \lambda^{4-\frac{N}{q}}
        \|u(\cdot,\lambda^2t)\|_{L^q}.
\]
Thus the scale-invariant exponent is determined by \(4-N/q=0\), namely
\[
        q_c=\frac{N}{4}.
\]
In particular, \(N=4\) is the mass-critical dimension, because then \(q_c=1\)
and \(\|u_\lambda\|_{L^1}=\|u\|_{L^1}\).  For \(N<4\), the mass is subcritical;
for \(N=4\), \(L^1\) is critical; and for \(N>4\), the natural Lebesgue
critical space is \(L^{N/4}\).

\begin{proposition}[Scaling-critical exponent]
\label{prop:critical-exponent}
The PES scaling-critical Lebesgue exponent is \(q_c=N/4\).  Consequently, the
mass-critical dimension is \(N=4\).
\end{proposition}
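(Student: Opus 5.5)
The plan is to read off the exponent from the exact scaling symmetry of the principal flow, Lemma~\ref{lem:exact-scaling}, and then compute how Lebesgue norms transform under it. By that lemma, the map \(u\mapsto u_\lambda\), \(u_\lambda(x,t)=\lambda^4u(\lambda x,\lambda^2t)\), sends solutions of \eqref{eq:principal-PES} to solutions, and \(\alpha=4\) is the only amplitude with this property. Remark~\ref{rem:perturbation-subcritical} then lets us treat the full PES kernel \(K_\tau\) as an \(O(\lambda^{-2})\) perturbation of the principal flow in the concentration regime \(\lambda\to\infty\). So the critical exponent of PES is, by definition, the exponent \(q\) for which \(\|u_\lambda(\cdot,t)\|_{L^q}\) is independent of \(\lambda\).

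The first step is the change of variables \(y=\lambda x\), \(dx=\lambda^{-N}dy\). This gives
\[
\|u_\lambda(\cdot,t)\|_{L^q}^q=\lambda^{4q}\int |u(\lambda x,\lambda^2 t)|^q\,dx=\lambda^{4q-N}\|u(\cdot,\lambda^2t)\|_{L^q}^q,
\]
that is, \(\|u_\lambda\|_{L^q}=\lambda^{4-N/q}\|u\|_{L^q}\). The norm is invariant for every \(\lambda>0\) exactly when \(4-N/q=0\), i.e. \(q=q_c=N/4\). Taking \(q=1\) shows that the mass \(\|u_\lambda\|_{L^1}=\lambda^{4-N}\|u\|_{L^1}\) is scale invariant if and only if \(N=4\). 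This is the mass-critical dimension. The subcritical and supercritical cases are then classified by the sign of \(4-N\).

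The only subtle point is interpretive rather than computational. The full operator \(K_\tau\) is not homogeneous, so "critical" has to be understood through the principal flow. I would justify this by citing Remark~\ref{rem:perturbation-subcritical}: at frequencies \(\sim\lambda\) the symbol equals \(\tau^{-1}\lambda^{-4}|\xi|^{-4}(1+O(\lambda^{-2}))\). The lower-order part \(L_\tau\) therefore carries an extra factor \(\lambda^{-2}\) relative to the balancing terms and cannot shift the balance \(\alpha+2=2\alpha-2\).

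On bounded or periodic domains, dilation is not a symmetry of \(\Omega\) itself. There one applies the same computation to localized concentrating profiles supported in a small ball, using the high-frequency asymptotics of Lemma~\ref{lem:high-frequency-homogeneity}. This yields the same exponent \(q_c=N/4\).
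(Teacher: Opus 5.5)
Your proposal is correct and matches the paper's proof. Both use the \(\alpha=4\) scaling from Lemma~\ref{lem:exact-scaling} to get \(\|u_\lambda\|_{L^q}=\lambda^{4-N/q}\|u\|_{L^q}\), set the exponent to zero to obtain \(q_c=N/4\), and specialize to \(q=1\) to get \(N=4\). Your explicit change of variables and your remarks on the lower-order part \(L_\tau\) and on bounded domains only spell out what the paper takes from the surrounding text.
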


\begin{proof}
The preceding scaling identity gives
\(\|u_\lambda\|_{L^q}=\lambda^{4-N/q}\|u\|_{L^q}\).  The exponent is invariant
if and only if \(4-N/q=0\), equivalently \(q=N/4\).  Since \(L^1\) corresponds
to \(q=1\), mass is invariant exactly when \(N=4\).
\end{proof}

\begin{remark}[Correction of the classical exponent]
\label{rem:corrected-critical-space}
For the classical KS model, the critical amplitude is \(\alpha=2\),
and hence \(q_c=N/2\).  For PES, the fourth-order cascade changes the amplitude
to \(\alpha=4\), and hence the critical exponent is \(q_c=N/4\).  Thus
\(L^{N/2}\) is not the PES critical Lebesgue space.
\end{remark}

\subsection{The logarithmic kernel in four dimensions}

The dimension \(N=4\) also appears from the Green-kernel viewpoint.  The
fundamental solution of \((-\Delta)^2\) in \(\mathbb R^N\) has power-law
behavior \(|x|^{4-N}\) when \(N\ne4\), while in the borderline dimension
\(N=4\) it is logarithmic:
\[
        \Phi_4(x)
        =
        \frac{1}{8\pi^2}\log\frac1{|x|}
        \quad
        \text{modulo an additive smooth term}.
\]
Since \(K_\tau\) has principal part \(\tau^{-1}(-\Delta)^{-2}\), its kernel has
the same leading local singularity, with coefficient scaled by \(\tau^{-1}\).
Thus in \(N=4\), the PES interaction is governed by a logarithmic kernel, just
as the classical KS interaction \((-\Delta)^{-1}\) is logarithmic in
\(N=2\).

\begin{lemma}[Four-dimensional logarithmic kernel]
\label{lem:log-kernel-four-dim}
Let \(G_\tau\) denote the local kernel of \(K_\tau\).  In dimension \(N=4\), its
leading near-diagonal singularity is logarithmic:
\[
        G_\tau(x,y)
        =
        \frac{1}{8\pi^2\tau}\log\frac1{|x-y|}
        +
        R_\tau(x,y),
\]
where \(R_\tau\) is less singular than the logarithmic term locally away from
the boundary.  In dimensions \(N\ne4\), the corresponding principal singularity
is of power-law type \(|x-y|^{4-N}\).
\end{lemma}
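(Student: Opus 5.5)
Work first on $\mathbb R^4$ and use partial fractions. For $\tau\neq1$,
\[
\frac{1}{(1+|\xi|^2)(1+\tau|\xi|^2)}
=\frac{1}{1-\tau}\Bigl(\frac{1}{1+|\xi|^2}-\frac{\tau}{1+\tau|\xi|^2}\Bigr),
\]
so
\[
G_\tau(x)=\frac{1}{1-\tau}\bigl(B_1(x)-B_{\tau}(x)\bigr),
\qquad
B_\tau(x)=\tau^{-N/2}B_1(x/\sqrt\tau).
\]
Here $B_1$ is the Bessel kernel of $(I-\Delta)^{-1}$, and $\tau^{-1}B_\tau$ rescaled appropriately is the kernel of $(I-\tau\Delta)^{-1}$. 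Concretely, the kernel of $(I-\tau\Delta)^{-1}$ is $\tau^{-1}\cdot\tau^{-(N-2)/2}\cdots$, and the constants will be checked directly.

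In $N=4$ the Bessel kernel has the expansion
\[
B_1(x)=\frac{1}{4\pi^2|x|^2}+\frac{1}{8\pi^2}\log|x|+C+o(1),
\]
which comes from $K_1(r)=1/r+\tfrac r2\log(r/2)+\dots$. The $|x|^{-2}$ terms then cancel exactly in the difference, because the kernel of $(I-\tau\Delta)^{-1}$ is $(4\pi^2\tau|x|^2)^{-1}+\dots$. What remains is a logarithmic term with coefficient
\[
\frac{1}{1-\tau}\cdot\frac{1}{8\pi^2}\Bigl(1-\frac1\tau\Bigr)
=-\frac{1}{8\pi^2\tau}.
\]
That is, $G_\tau(x)=\frac{1}{8\pi^2\tau}\log\frac1{|x|}+O(1)$. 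The case $\tau=1$ follows by continuity in $\tau$ or by using $\widehat{G}=(1+|\xi|^2)^{-2}$, whose kernel is $\frac{|x|}{8\pi^2}K_1(|x|)\cdot$const.

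A cleaner alternative avoids the Bessel expansions. Write
\[
\widehat{G_\tau}-\tau^{-1}\widehat{\Phi_4}\chi_{\{|\xi|>1\}}=O(|\xi|^{-6}).
\]
This is integrable in $\mathbb R^4$, so its inverse transform is bounded, indeed continuous. The low-frequency piece $\chi_{\{|\xi|\le1\}}\widehat G_\tau$ gives a smooth function. Finally, the inverse transform of $|\xi|^{-4}\chi_{\{|\xi|>1\}}$ is $\frac{1}{8\pi^2}\log\frac1{|x|}+O(1)$ near $0$. To see this, differentiate radially or compare with the known fundamental solution $\Phi_4$ of $\Delta^2$: the difference from $\Phi_4$ is a locally integrable low-frequency remainder. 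I would present this version, since it uses only Lemma~\ref{lem:high-frequency-homogeneity}'s $O(|\xi|^{-6})$ remainder.

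For a bounded Neumann domain or $\mathbb T^4$, write $G_\tau(x,y)=G^{\mathbb R^4}_\tau(x-y)+H(x,y)$. For fixed interior $y$, the corrector $H$ solves $A_\tau A H=0$ in $\Omega$ with smooth boundary data, since the singularity sits away from $\partial\Omega$. By interior elliptic regularity $H$ is smooth locally away from the boundary, which gives the claimed remainder $R_\tau$.

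For $N\neq4$, the same high-frequency splitting applies, and the inverse transform of $|\xi|^{-4}$ gives $c_N|x|^{4-N}$ for $N>4$. For $N<4$ the kernel is bounded and the "singularity" is just the nonsmooth term $|x|^{4-N}$ (with $\log$-free power behavior, or $|x|\log$-type only in even exceptions). I would state this qualitatively.

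The main obstacle is making the inverse Fourier transform of $|\xi|^{-4}\chi_{\{|\xi|>1\}}$ rigorous with the exact constant $1/(8\pi^2)$. I would handle this by verifying $\Delta^2\Phi_4=\delta$ distributionally for $\Phi_4=\frac1{8\pi^2}\log\frac1{|x|}$: in $\mathbb R^4$ we have $\Delta\log|x|=2/|x|^2$, and $-\Delta(1/(4\pi^2|x|^2))=\delta$. This pins the constant.
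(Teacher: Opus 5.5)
Your preferred argument is correct, and it takes a genuinely different route from the paper. The paper, like your first sketch, does the following:
\begin{itemize}
\item it works with the partial-fraction identity $K_\tau=\frac{1}{1-\tau}(I-\Delta)^{-1}-\frac{\tau}{1-\tau}(I-\tau\Delta)^{-1}$;
\item it expands each Bessel kernel near the diagonal as $\frac{1}{4\pi^2\mu}|x|^{-2}+\frac{1}{8\pi^2\mu^2}\log|x|+O(1)$;
\item it reads off the exact cancellation of the $|x|^{-2}$ terms and the surviving coefficient $-\frac{1}{8\pi^2\tau}$.
\end{itemize}
It then treats $\tau=1$ separately and dismisses boundaries as ``standard elliptic parametrix theory''.

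Your high-frequency splitting instead subtracts only the principal symbol $\tau^{-1}|\xi|^{-4}\mathbf 1_{\{|\xi|>1\}}$. The rest of $\widehat{G_\tau}$ is bounded on $\{|\xi|\le1\}$ and $O(|\xi|^{-6})$ outside, hence lies in $L^1(\mathbb R^4)$ and contributes a bounded continuous kernel. The principal piece is $\tau^{-1}\Phi_4$ up to a smooth error. This buys several things:
\begin{itemize}
\item uniformity in $\tau>0$: there are no $1/(1-\tau)$ factors, no separate case $\tau=1$, and no need to control the partial-fraction remainders as $\tau\to1$, which continuity of the coefficient alone does not provide;
\item indeed, the paper's one-line ``direct'' treatment of $\tau=1$ is your argument specialized to that case;
\item no expansion of the individual Bessel kernels is needed, and the constant is tied directly to Lemma~\ref{lem:high-frequency-homogeneity}.
\end{itemize}
Your corrector argument on $\mathbb T^4$ and Neumann domains is also more explicit than the paper's. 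Lemma~\ref{lem:candidate-mass} uses $R_\tau$ bounded jointly in $(x,y)$, so make the interior estimates uniform in $y$, e.g.\ via $\sup_y\|G_\tau(\cdot,y)\|_{L^1}\le1$. What the paper's route buys is the explicit picture that each constituent order-$(-2)$ potential carries an $|x|^{-2}$ singularity, and the logarithm survives only through cancellation. The paper reuses this point in the proof of Lemma~\ref{lem:candidate-mass}.

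A few details need repair.
\begin{itemize}
\item Your step ``the difference from $\Phi_4$ is a locally integrable low-frequency remainder'' is not what gives $O(1)$. Instead, for $g:=\mathcal F^{-1}(|\xi|^{-4}\mathbf 1_{\{|\xi|>1\}})$ one has $\Delta^2(g-\Phi_4)=-\mathcal F^{-1}(\mathbf 1_{\{|\xi|\le1\}})$, a smooth function. So $g-\Phi_4$ is smooth by elliptic regularity, and your check $\Delta^2\Phi_4=\delta$ then pins the constant $\frac{1}{8\pi^2}$.
\item In the first sketch the identity should read $G_\tau=\frac{1}{1-\tau}(B_1-\tau B_\tau)$, with $B_\tau=\tau^{-N/2}B_1(x/\sqrt\tau)$ the kernel of $(I-\tau\Delta)^{-1}$. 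Without the factor $\tau$ the $|x|^{-2}$ terms do not cancel; your coefficient computation tacitly uses the correct form.
\item For $\tau=1$, the kernel of $(1+|\xi|^2)^{-2}$ on $\mathbb R^4$ is $\frac{1}{8\pi^2}K_0(|x|)$. It is not a multiple of $|x|K_1(|x|)$, which is bounded at the origin and corresponds to $(1+|\xi|^2)^{-3}$.
\item For $N\ge6$ the $O(|\xi|^{-6})$ remainder is no longer integrable. Treat it, with a smooth cutoff, as a symbol of order $-6$; its kernel is $O(|x|^{6-N})$ (logarithmic at $N=6$), hence still less singular than $|x|^{4-N}$.
\end{itemize}
You are right that for $N=2$ the principal nonsmooth term is $|x|^2\log|x|$ rather than a pure power.
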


\begin{proof}
Fix $N=4$ and work locally near the diagonal; boundary and lower-order
contributions are absorbed into $R_\tau$ by standard elliptic parametrix
theory.

Assume first $\tau\neq1$. Partial fractions in the symbol variable give the
operator identity
\[
   K_\tau=(I-\Delta)^{-1}(I-\tau\Delta)^{-1}
        =\frac{1}{1-\tau}(I-\Delta)^{-1}
         -\frac{\tau}{1-\tau}(I-\tau\Delta)^{-1},
\]
which exhibits $K_\tau$ as a difference of two order $-2$ Bessel
potentials. Each summand is individually as singular as $(-\Delta)^{-1}$, so
the asserted order $-4$ behaviour can only arise from a cancellation; we make
it explicit.

Let $G^{(\mu)}$ denote the kernel of $(I-\mu\Delta)^{-1}$, $\mu>0$. Writing
$G^{(\mu)}=\tfrac{1}{4\pi^2\mu}|x|^{-2}+h_\mu$ and using
$-\Delta\bigl(\tfrac{1}{4\pi^2}|x|^{-2}\bigr)=\delta$ in $\mathbb R^4$, one
finds $(I-\mu\Delta)h_\mu=-\tfrac{1}{4\pi^2\mu}|x|^{-2}$. Since
$\Delta\log|x|=2|x|^{-2}$ in $\mathbb R^4$, the leading local solution is
$h_\mu=\tfrac{1}{8\pi^2\mu^2}\log|x|+\mathcal O(1)$. Hence, near the diagonal,
\[
   G^{(1)}(x)=\frac{1}{4\pi^2}|x|^{-2}+\frac{1}{8\pi^2}\log|x|+\mathcal O(1),
   \qquad
   G^{(\tau)}(x)=\frac{1}{4\pi^2\tau}|x|^{-2}
                +\frac{1}{8\pi^2\tau^2}\log|x|+\mathcal O(1).
\]
Substituting into the partial-fraction identity, the $|x|^{-2}$ coefficients
sum to
\[
   \frac{1}{1-\tau}\cdot\frac{1}{4\pi^2}
   -\frac{\tau}{1-\tau}\cdot\frac{1}{4\pi^2\tau}=0,
\]
so the second-order singularities cancel exactly. The surviving logarithmic
coefficient is
\[
   \frac{1}{1-\tau}\cdot\frac{1}{8\pi^2}
   -\frac{\tau}{1-\tau}\cdot\frac{1}{8\pi^2\tau^2}
   =\frac{1}{8\pi^2(1-\tau)}\Bigl(1-\frac1\tau\Bigr)
   =-\frac{1}{8\pi^2\tau}.
\]
Therefore $G_\tau(x,y)=\tfrac{1}{8\pi^2\tau}\log\frac{1}{|x-y|}+R_\tau(x,y)$
with $R_\tau$ bounded near the diagonal, which is the asserted expansion.

The borderline case $\tau=1$ (where the partial fraction degenerates into a
double pole) follows either by continuity in $\tau$ of the coefficient
$\tfrac{1}{8\pi^2\tau}$, or directly: $K_1=(I-\Delta)^{-2}$ has high-frequency
symbol $|\xi|^{-4}$, whose fundamental solution is the biharmonic kernel
$\tfrac{1}{8\pi^2}\log\frac1{|x|}$ in $\mathbb R^4$. In dimensions $N\neq4$ the
same partial-fraction/symbol analysis yields the power-law principal
singularity $|x-y|^{4-N}$.
\end{proof}

\begin{remark}[The analogy]
\label{rem:correct-analogy}
The proper analogy is therefore
\[
        \text{classical KS in }N=2
        \quad\longleftrightarrow\quad
        \text{PES in }N=4.
\]
The first case is governed by the logarithmic kernel of \((-\Delta)^{-1}\) in
two dimensions, while the second is governed by the logarithmic kernel of
\((-\Delta)^{-2}\) in four dimensions.
\end{remark}

The preceding lemmas establish the operator-level basis for the revised
criticality classification.  The PES system is not merely an indirect
KS model with a cosmetic extra variable; its eliminated signal law
is a fourth-order elliptic interaction.  The relevant scaling amplitude is
\(\alpha=4\), the mass-critical dimension is \(N=4\), and the critical
Lebesgue scale is \(L^{N/4}\).  Any sharp threshold theory for PES should
therefore be sought in the four-dimensional logarithmic/Adams regime, rather
than in a five-dimensional mass-critical regime or in the classical
\(L^{N/2}\) KS scale.

This conclusion applies to PES only.  It should not be transferred to the MEP
system without further analysis, because MEP is governed by a Volterra memory
operator rather than by the static fourth-order kernel \(K_\tau\).

\section{Free Energy, Adams-Type Control, and the Four-Dimensional Critical Regime}
\label{sec:free-energy-adams}

We now turn to the analytic mechanism behind the four-dimensional PES
critical theory.  Throughout this section \(K_\tau=(I-\tau\Delta)^{-1}
(I-\Delta)^{-1}\), with \(\tau>0\), and the PES equation is written in the
eliminated form
\begin{equation}
        \partial_t u=\Delta u-\chi\nabla\cdot(u\nabla K_\tau u).
\label{PES-eliminated}
\end{equation}
The competition is between the entropy \(\int_\Omega u\log u\,dx\) and the
fourth-order attractive interaction \(\int_\Omega uK_\tau u\,dx\).  In
dimension \(N=4\), the kernel of \(K_\tau\) has a logarithmic leading
singularity, and this places the problem in the Adams/logarithmic
HLS regime.

For \(M>0\), define the admissible entropy class
\begin{equation}
        \mathcal A_M(\Omega)
        :=
        \left\{
        u\ge0:\ \int_\Omega u\,dx=M,\quad
        \int_\Omega u\log u\,dx<\infty
        \right\},
\label{admissible-class}
\end{equation}
with the convention \(0\log0=0\).  For \(u\in\mathcal A_M(\Omega)\), the PES
free energy is
\begin{equation}
        \mathcal F_{\mathrm{PES}}[u]
        :=
        \int_\Omega u\log u\,dx
        -
        \frac{\chi}{2}\int_\Omega uK_\tau u\,dx.
\label{PES-free-energy-section3}
\end{equation}
The factor \(1/2\) is essential: since \(K_\tau\) is self-adjoint, the first
variation of the interaction term is \(\chi K_\tau u\), not
\(\chi K_\tau u/2\).  Thus, modulo an irrelevant additive constant,
\begin{equation}
        \frac{\delta\mathcal F_{\mathrm{PES}}}{\delta u}
        =
        \log u-\chi K_\tau u.
\label{PES-first-variation}
\end{equation}
Consequently \eqref{PES-eliminated} becomes the formal Wasserstein-type
gradient-flow identity
\begin{equation}
        \partial_tu
        =
        \nabla\cdot\left(
        u\nabla\frac{\delta\mathcal F_{\mathrm{PES}}}{\delta u}
        \right)
        =
        \nabla\cdot\left(u\nabla(\log u-\chi K_\tau u)\right).
\label{PES-gradient-flow}
\end{equation}

\begin{proposition}[Energy dissipation identity]
\label{prop:PES-energy-dissipation}
Let \(u\) be a smooth positive solution of \eqref{PES-eliminated} on
\([0,T]\), with either periodic boundary conditions, sufficient decay on
\(\mathbb R^N\), or no-flux boundary condition
\((\nabla u-\chi u\nabla K_\tau u)\cdot\nu=0\) on \(\partial\Omega\).  Then
\begin{equation}
        \frac{d}{dt}\mathcal F_{\mathrm{PES}}[u(t)]
        =
        -
        \int_\Omega
        u\left|\nabla(\log u-\chi K_\tau u)\right|^2dx
        \le0.
\label{PES-dissipation-section3}
\end{equation}
\end{proposition}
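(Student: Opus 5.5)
The plan is to differentiate the free energy in time, use the self-adjointness of \(K_\tau\) from Lemma~\ref{lem:spectral-positivity} to identify the time derivative of the interaction term, and then integrate by parts against the gradient-flow form \eqref{PES-gradient-flow}. Write \(\mu:=\log u-\chi K_\tau u\) for the chemical potential.

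\textbf{Step 1: differentiate the two terms.} For the entropy, differentiation under the integral sign gives
\[
\frac{d}{dt}\int_\Omega u\log u\,dx=\int_\Omega (\log u+1)\,\partial_t u\,dx.
\]
For the interaction term we use bilinearity and self-adjointness, \(\langle K_\tau\partial_tu,u\rangle=\langle\partial_tu,K_\tau u\rangle\). This yields
\[
\frac{d}{dt}\,\frac{\chi}{2}\int_\Omega uK_\tau u\,dx=\chi\int_\Omega \partial_t u\,K_\tau u\,dx,
\]
and this is exactly where the factor \(1/2\) is consumed. Combining,
\[
\frac{d}{dt}\mathcal F_{\mathrm{PES}}[u]=\int_\Omega(\mu+1)\,\partial_tu\,dx.
\]

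\textbf{Step 2: drop the constant.} Mass is conserved: integrating \eqref{PES-eliminated} over \(\Omega\), the divergence integrates to a boundary flux, which vanishes under the no-flux condition, periodicity, or decay. Hence \(\int_\Omega\partial_tu\,dx=0\), and the additive constant \(+1\) contributes nothing.

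\textbf{Step 3: integrate by parts.} Since \(u>0\), we have \(\nabla u=u\nabla\log u\), so \(\nabla u-\chi u\nabla K_\tau u=u\nabla\mu\) and \(\partial_tu=\nabla\cdot(u\nabla\mu)\). Then
\[
\int_\Omega\mu\,\nabla\cdot(u\nabla\mu)\,dx=\int_{\partial\Omega}\mu\,u\nabla\mu\cdot\nu\,dS-\int_\Omega u|\nabla\mu|^2\,dx.
\]
The boundary term vanishes in each setting. In the bounded case, \(u\nabla\mu\cdot\nu\) is precisely the prescribed flux \((\nabla u-\chi u\nabla K_\tau u)\cdot\nu=0\). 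On the torus there is no boundary. On \(\mathbb R^N\) the term vanishes by the assumed decay. This gives \eqref{PES-dissipation-section3}, and the sign follows from \(u>0\).

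\textbf{Main obstacle: justification rather than algebra.} The points needing care are:
\begin{itemize}
\item differentiating under the integral sign;
\item the symmetry of \(K_\tau\) on the relevant function class;
\item the vanishing of the flux on \(\mathbb R^N\), where \(\mu\) contains \(\log u\), which is unbounded as \(u\to0\) at infinity.
\end{itemize}
On bounded domains and on \(\mathbb T^N\), smoothness and positivity on \([0,T]\times\overline\Omega\) make every step routine. I would interpret the whole-space hypothesis of ``sufficient decay'' as including \(u\,\mu\,\nabla\mu\to0\) fast enough at infinity. If needed, I would make this rigorous by multiplying by a cutoff \(\phi_R\), integrating by parts, and letting \(R\to\infty\), using integrability of \(u|\nabla\mu|^2\) and of \(u|\log u|\).
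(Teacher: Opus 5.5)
Your proposal is correct and follows the same route as the paper: rewrite the equation as $\partial_t u=\nabla\cdot(u\nabla\mu)$ with $\mu=\log u-\chi K_\tau u$ (self-adjointness of $K_\tau$ absorbs the factor $1/2$), then integrate by parts, with the boundary term removed by the no-flux condition, periodicity, or decay. You make explicit two steps the paper leaves implicit: the additive constant $+1$ drops out by mass conservation, and the boundary flux $u\nabla\mu\cdot\nu$ is exactly the prescribed no-flux quantity.
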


\begin{proof}
Using \eqref{PES-first-variation} and \eqref{PES-gradient-flow}, integration
by parts gives
\[
        \frac{d}{dt}\mathcal F_{\mathrm{PES}}[u(t)]
        =
        \int_\Omega
        (\log u-\chi K_\tau u)\,
        \nabla\cdot\left(u\nabla(\log u-\chi K_\tau u)\right)dx.
\]
The boundary term vanishes by the stated boundary condition, periodicity, or
decay at infinity.  This yields \eqref{PES-dissipation-section3}.
\end{proof}

\begin{remark}[Mathematical physics perspective and Wasserstein framework]
\label{rem:gradient-flow-physics}
The structural identity \eqref{PES-gradient-flow} and the resulting dissipation inequality \eqref{PES-dissipation-section3} firmly embed the PES model within the modern framework of mathematical physics and optimal transport. Formally, the system can be characterized as a metric gradient flow on the Wasserstein space $(\mathcal{P}_2(\Omega), d_W)$ governed by the geometric equation
\[
        \partial_t u = -\operatorname{grad}_{W} \mathcal F_{\mathrm{PES}}[u],
\]
where the total free energy functional $\mathcal F_{\mathrm{PES}}[u] = \mathcal{H}[u] - \mathcal{W}[u]$ represents a delicate balance between the internal Boltzmann entropy $\mathcal{H}[u] := \int_\Omega u \log u \,dx$ and the nonlocal interaction potential energy $\mathcal{W}[u] := \frac{\chi}{2}\int_\Omega u K_\tau u \,dx$. Under this Riemannian-type framework on measures, the energy dissipation rate \eqref{PES-dissipation-section3} corresponds precisely to the squared Wasserstein metric slope (or the squared norm of the metric gradient): with $ V=-\nabla(\log u-\chi K_\tau u)$ the following identity holds
\[
        \left| \operatorname{grad}_{W} \mathcal F_{\mathrm{PES}}[u] \right|^2_{W} 
        = \int_\Omega u \left|\nabla \frac{\delta \mathcal F_{\mathrm{PES}}}{\delta u}\right|^2 dx 
        = \int_\Omega u |V|^2 \,dx.
\]
This rigorous formulation encapsulates a profound thermodynamic competition in four dimensions: the maximization of the entropy $\mathcal{H}[u]$ drives the linear diffusion to disperse the cell population, whereas the minimization of the potential energy $\mathcal{W}[u]$ via the fourth-order Bessel-type kernel $K_\tau$ drives non-local aggregation. 
\end{remark}

\subsection{Entropy versus fourth-order attraction}

The attractive part of \(\mathcal F_{\mathrm{PES}}\) is
\begin{equation}
        \frac{\chi}{2}\int_\Omega uK_\tau u\,dx.
\label{attraction-term}
\end{equation}
In dimension \(N=4\), the local kernel \(G_\tau(x,y)\) of \(K_\tau\) satisfies
\begin{equation}
        G_\tau(x,y)
        =
        \frac{1}{8\pi^2\tau}\log\frac1{|x-y|}
        +
        R_\tau(x,y),
\label{kernel-log-four}
\end{equation}
where \(R_\tau\) is less singular near the diagonal.  Hence the leading
singular part of \(\int uK_\tau u\) is logarithmic.  This is the fourth-order
analogue of the classical two-dimensional KS free energy
\begin{equation}
        \mathcal F_{\mathrm{KS}}[u]
        =
        \int u\log u\,dx
        -
        \frac{\chi}{2}\int u(-\Delta)^{-1}u\,dx,
\label{KS-free-energy-comparison}
\end{equation}
where \((-\Delta)^{-1}\) has a logarithmic kernel in \(N=2\).  The
analogy is therefore
\begin{equation}
        (-\Delta)^{-1}\text{ logarithmic in }N=2,
        \qquad
        (-\Delta)^{-2}\text{ logarithmic in }N=4.
\label{log-analogy}
\end{equation}

\begin{lemma}[Concentration scaling and candidate sharp mass]
\label{lem:candidate-mass}
Let \(N=4\), let \(x_0\in\Omega\), and let \(U\ge0\) be smooth, compactly
supported in \(B_1(0)\), with \(\int_{\mathbb R^4}U\,dx=M\).  Define
\(u_\varepsilon(x)=\varepsilon^{-4}U((x-x_0)/\varepsilon)\).  Then, as
\(\varepsilon\downarrow0\),
\begin{equation}
        \int_\Omega u_\varepsilon\log u_\varepsilon\,dx
        =
        4M\log\frac1\varepsilon+O(1),
\label{entropy-scaling}
\end{equation}
and
\begin{equation}
        \int_\Omega u_\varepsilon K_\tau u_\varepsilon\,dx
        =
        \frac{M^2}{8\pi^2\tau}\log\frac1\varepsilon+O(1).
\label{interaction-scaling}
\end{equation}
Consequently
\begin{equation}
        \mathcal F_{\mathrm{PES}}[u_\varepsilon]
        =
        \left(
        4M-\frac{\chi M^2}{16\pi^2\tau}
        \right)
        \log\frac1\varepsilon+O(1).
\label{energy-scaling}
\end{equation}
The formal concentration threshold suggested by this computation is
\begin{equation}
        M_*=\frac{64\pi^2\tau}{\chi}.
\label{candidate-critical-mass}
\end{equation}
\end{lemma}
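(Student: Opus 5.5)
The plan is to treat the two terms of \(\mathcal F_{\mathrm{PES}}\) separately, changing variables \(x=x_0+\varepsilon y\) in each, and then to combine the results.

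\emph{Entropy term.} Since \(x_0\) is interior and \(\varepsilon\) is small, the support \(B_\varepsilon(x_0)\) lies inside \(\Omega\). With \(x=x_0+\varepsilon y\) and \(dx=\varepsilon^4dy\),
\[
\int_\Omega u_\varepsilon\log u_\varepsilon\,dx=\int_{\mathbb R^4}U(y)\bigl(\log U(y)+4\log\tfrac1\varepsilon\bigr)\,dy=4M\log\tfrac1\varepsilon+\int U\log U\,dy .
\]
The last integral is finite because \(U\) is smooth and compactly supported and \(t\log t\) is bounded on bounded sets. This gives \eqref{entropy-scaling}, and in fact the identity is exact.

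\emph{Interaction term.} I would write \(\int u_\varepsilon K_\tau u_\varepsilon=\iint u_\varepsilon(x)G_\tau(x,z)u_\varepsilon(z)\,dx\,dz\) and insert the expansion \eqref{kernel-log-four} from Lemma~\ref{lem:log-kernel-four-dim}. Rescaling both variables gives \(\log\frac1{|x-z|}=\log\frac1\varepsilon+\log\frac1{|y-y'|}\). The logarithmic part therefore contributes
\[
\frac{1}{8\pi^2\tau}\Bigl(M^2\log\tfrac1\varepsilon+\iint U(y)U(y')\log\tfrac1{|y-y'|}\,dy\,dy'\Bigr).
\]
The double integral is finite: \(U\in L^\infty\) has compact support, and \(\log|y-y'|\) is locally integrable in \(\mathbb R^4\). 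The remainder contributes \(\iint u_\varepsilon R_\tau u_\varepsilon\). Since \(R_\tau\) is bounded near the diagonal for \(x,z\) in a fixed compact neighbourhood of the interior point \(x_0\) (this is how the proof of Lemma~\ref{lem:log-kernel-four-dim} concludes), this term is at most \(M^2\sup|R_\tau|=O(1)\). That yields \eqref{interaction-scaling}.

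\emph{Combination.} Substituting both expansions into \(\mathcal F_{\mathrm{PES}}=\int u\log u-\frac\chi2\int uK_\tau u\) gives the coefficient \(4M-\frac{\chi M^2}{16\pi^2\tau}\) of \(\log\frac1\varepsilon\), which is \eqref{energy-scaling}. This coefficient changes sign at \(M=64\pi^2\tau/\chi\). Below that mass the free energy \(\to+\infty\) along concentration; above it the free energy \(\to-\infty\), so concentration is energetically favoured. This identifies \(M_*\) in \eqref{candidate-critical-mass} as a threshold candidate only, not a proven sharp threshold.

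\emph{Main obstacle.} The delicate step is justifying that \(R_\tau\) is bounded uniformly on \(B_\delta(x_0)\times B_\delta(x_0)\), rather than merely ``less singular''. If only a weaker statement were available, such as \(R_\tau=O(|x-z|^{-a})\) with \(a<4\) or \(R_\tau=o(\log)\), the argument would still work, because \(\iint u_\varepsilon|R_\tau|u_\varepsilon=o(\log\frac1\varepsilon)\) after rescaling. However, the \(O(1)\) error term in the statement needs boundedness. To obtain it, I would use the explicit Bessel-kernel expansions from the proof of Lemma~\ref{lem:log-kernel-four-dim}, where the cancellation leaves \(O(1)\) plus \(O(|x|^2\log|x|)\) terms. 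On bounded domains I would add the fact that the Neumann kernel differs from the whole-space kernel by a function that is smooth in the interior.
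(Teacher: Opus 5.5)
Your argument is correct and is essentially the paper's proof. It uses the same change of variables for the entropy, the same insertion of the logarithmic expansion from Lemma~\ref{lem:log-kernel-four-dim} with $\log\frac{1}{|x-z|}=\log\frac1\varepsilon+\log\frac{1}{|y-y'|}$, and the same $O(M^2)$ bound on the remainder via its boundedness near the diagonal.

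One correction to your side remark: a remainder of size $O(|x-z|^{-a})$ with $a>0$ would \emph{not} be harmless. After rescaling it contributes $\varepsilon^{-a}\iint U(y)U(y')|y-y'|^{-a}\,dy\,dy'$, which dominates $\log\frac1\varepsilon$. This is exactly why the paper stresses the cancellation of the $|x-y|^{-2}$ terms; only your $o(\log)$ alternative would survive.
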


\begin{proof}
Set $u_\varepsilon(x)=\varepsilon^{-4}U((x-x_0)/\varepsilon)$ and substitute
$x=x_0+\varepsilon z$, $dx=\varepsilon^4\,dz$, so $\int u_\varepsilon=M$ for
every $\varepsilon$. For the entropy,
\[
   \int_\Omega u_\varepsilon\log u_\varepsilon
   =\int U(z)\bigl(4\log\tfrac1\varepsilon+\log U(z)\bigr)dz
   =4M\log\tfrac1\varepsilon+\mathcal O(1),
\]
which is \eqref{entropy-scaling}.

For the interaction, insert $G_\tau=\tfrac{1}{8\pi^2\tau}\log\frac1{|x-y|}
+R_\tau$ from Lemma~\ref{lem:log-kernel-four-dim}. Two facts make the
remainder harmless under concentration. First, $R_\tau$ is bounded near the
diagonal, so its contribution is
$\iint u_\varepsilon(x)u_\varepsilon(y)R_\tau\,dx\,dy=\mathcal O(M^2)=\mathcal O(1)$, uniformly
in $\varepsilon$. This step relies essentially on the cancellation in
Lemma~\ref{lem:log-kernel-four-dim}: had $K_\tau$ retained the $|x-y|^{-2}$
singularity of either constituent Bessel potential, the corresponding double
integral would scale like $\varepsilon^{-2}$ and would not be $\mathcal O(1)$.
Second, scaling the logarithmic part via
$\log\frac1{|x-y|}=\log\frac1\varepsilon+\log\frac1{|z-\zeta|}$ gives
\[
   \frac{1}{8\pi^2\tau}\iint u_\varepsilon(x)u_\varepsilon(y)
        \log\tfrac1{|x-y|}\,dx\,dy
   =\frac{M^2}{8\pi^2\tau}\log\tfrac1\varepsilon
    +\frac{1}{8\pi^2\tau}\iint U(z)U(\zeta)\log\tfrac1{|z-\zeta|}\,dz\,d\zeta,
\]
whose second term is a fixed finite constant. This is
\eqref{interaction-scaling}. Substituting both expansions into
\eqref{PES-free-energy-section3} yields
\[
   \mathcal F_{\mathrm{PES}}[u_\varepsilon]
   =\Bigl(4M-\frac{\chi M^2}{16\pi^2\tau}\Bigr)\log\tfrac1\varepsilon+\mathcal O(1),
\]
which is \eqref{energy-scaling}; the bracket changes sign at
$M_*=64\pi^2\tau/\chi$.
\end{proof}

\begin{remark}[Two consistency checks for the candidate threshold]
\label{rem:consistency-checks}
The candidate $M_*=64\pi^2\tau/\chi$ is unproven, but it passes two
independent consistency tests that calibrate the heuristic against the
classical second-order theory.

\emph{(i) Duality ratio.} In two dimensions the sharp Moser--Trudinger
constant is $4\pi$ and the proven KS critical mass is $8\pi/\chi$, so
$\chi M_{\rm KS}=8\pi=2\cdot4\pi$. In four dimensions the sharp Adams
constant in \eqref{Adams-inequality} is $32\pi^2$, and at the natural filter
normalization $\tau=1$ the candidate satisfies $\chi M_*=64\pi^2=2\cdot
32\pi^2$. Thus the candidate stands in the same $2{:}1$ relation to the sharp
four-dimensional exponential-integrability constant as the proven KS mass does
to its two-dimensional counterpart.

\emph{(ii) Calibration in the proven case.} The concentration-scaling argument
of Lemma~\ref{lem:candidate-mass} returns the correct value where the answer
is known. Applying the identical computation to two-dimensional KS, with
$u_\varepsilon=\varepsilon^{-2}U((x-x_0)/\varepsilon)$ and the logarithmic
kernel $\tfrac{1}{2\pi}\log\frac1{|x|}$ of $(-\Delta)^{-1}$ in $\mathbb R^2$,
gives
\[
   \mathcal F_{\mathrm{KS}}[u_\varepsilon]
   =\Bigl(2M-\frac{\chi M^2}{4\pi}\Bigr)\log\tfrac1\varepsilon+O(1),
\]
whose bracket changes sign at $M=8\pi/\chi$, the sharp KS threshold. The same
heuristic that produces $64\pi^2\tau/\chi$ here is therefore exact in its
two-dimensional analogue.

Neither observation establishes sharpness; that requires the
$K_\tau$-adapted logarithmic HLS/Adams inequality of
Conjecture~\ref{conj:sharp-PES-threshold}. They show only that $M_*$ is the
value consistent with both the variational duality and the calibrated
concentration heuristic.
\end{remark}

\begin{remark}[Status of the constant \(M_*\)]
\label{rem:status-Mstar}
The number \(M_*=64\pi^2\tau/\chi\) is an exact concentration-scaling
candidate.  It is not asserted here to be the sharp global threshold unless a
sharp logarithmic HLS or Adams-type inequality adapted
to \(K_\tau\), \(\Omega\), and the boundary conditions is proved.  The
distinction between the formal sharp constant and a proved threshold is
essential.
\end{remark}

\subsection{Adams-type control}

The relevant functional inequality in four dimensions is the Adams inequality.
In one standard form, if \(\Omega\subset\mathbb R^4\) is bounded and
\(v\in W^{2,2}_0(\Omega)\) satisfies \(\|\Delta v\|_{L^2}\le1\), then
\begin{equation}
        \int_\Omega e^{32\pi^2 v^2}\,dx
        \le
        C|\Omega|.
\label{Adams-inequality}
\end{equation}
This is the fourth-order analogue of the Trudinger--Moser inequality.  In the
present model, however, the entropy variable is \(u\), whereas the smoothed
potential is \(K_\tau u\).  Therefore the useful form is not merely
\eqref{Adams-inequality}, but a mass-dependent logarithmic interaction bound
of the type
\begin{equation}
        \int_\Omega uK_\tau u\,dx
        \le
        a(M,\tau,\Omega)\int_\Omega u\log u\,dx
        +
        C(M,\tau,\Omega),
\label{abstract-log-bound}
\end{equation}
where \(M=\int_\Omega u\,dx\).  If \(a(M,\tau,\Omega)<2/\chi\), then
\(\mathcal F_{\mathrm{PES}}\) is bounded from below.

\begin{lemma}[Entropy--energy duality]
\label{lem:gibbs-duality}
Let \(\Omega\) be bounded, \(u\ge0\) with \(\int_\Omega u\,dx=M>0\), and let
\(g\in L^\infty(\Omega)\). Then
\[
   \int_\Omega u\,g\,dx
   \le
   \int_\Omega u\log u\,dx
   +M\log\!\Bigl(\tfrac1M\!\int_\Omega e^{g}\,dx\Bigr).
\]
Equality holds when \(u=M e^{g}/\!\int_\Omega e^{g}\).
\end{lemma}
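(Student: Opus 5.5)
The plan is to reduce the inequality to Jensen's inequality applied to the probability measure \(d\mu=u\,dx/M\), equivalently to the nonnegativity of a relative entropy. First I will dispose of trivial cases. Since \(g\in L^\infty(\Omega)\) and \(\Omega\) is bounded, \(Z:=\int_\Omega e^{g}\,dx\) lies in \((0,\infty)\). If \(\int_\Omega u\log u\,dx=+\infty\), there is nothing to prove. Otherwise \(u\log u\in L^1\); note that \(\int_\Omega (u\log u)_-\,dx\le |\Omega|/e\) because \(s\log s\ge -1/e\). Also \(ug\in L^1\), since \(|ug|\le \|g\|_\infty u\). So every term is finite.

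Next I introduce the Gibbs density \(\rho:=M e^{g}/Z\). It is strictly positive and bounded, and \(\int_\Omega \rho\,dx=M\). The key step is the algebraic identity
\[
   \int_\Omega u\log u\,dx-\int_\Omega u\,g\,dx+M\log\frac{Z}{M}
   =\int_\Omega u\log\frac{u}{\rho}\,dx ,
\]
which follows from \(\log\rho=\log M+g-\log Z\) and \(\int_\Omega u\,dx=M\). The claim is therefore equivalent to \(\int_\Omega u\log(u/\rho)\,dx\ge0\).

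To prove this, I write the integrand as \(\rho\,\phi(u/\rho)\) with \(\phi(s)=s\log s-s+1\ge0\), using the convention \(0\log0=0\). Since \(\int_\Omega (u-\rho)\,dx=M-M=0\), one gets
\[
\int_\Omega u\log\frac u\rho\,dx=\int_\Omega \rho\,\phi\!\left(\frac u\rho\right)dx\ge0 .
\]
The elementary inequality \(\phi\ge0\) has equality only at \(s=1\). Hence equality in the lemma holds precisely when \(u=\rho=Me^{g}/\int_\Omega e^{g}\,dx\) almost everywhere, which includes the stated equality case. As a check, substituting \(u=\rho\) directly gives \(\int\rho\log\rho=\int\rho g+M\log(M/Z)\), so the two sides agree.

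The only point needing care is integrability, namely splitting \(\int u\log(u/\rho)\) into \(\int u\log u-\int u\log\rho\) without meeting \(\infty-\infty\). This is handled by the boundedness of \(\log\rho\), which follows from \(g\in L^\infty\), together with the finiteness reduction made in the first step. I therefore do not expect a genuine obstacle: the lemma is the standard Gibbs variational principle, and its role is to convert an exponential-integrability (Adams-type) bound on \(\int e^{g}\) with \(g=\chi K_\tau u\) into the logarithmic interaction bound \eqref{abstract-log-bound}.
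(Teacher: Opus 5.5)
Your proposal is correct and takes essentially the paper's route: the paper just cites the Gibbs variational principle (equivalently Jensen) with \(\rho=u/M\), and your identity \(\int_\Omega u\log(u/\rho)\,dx=\int_\Omega\rho\,\phi(u/\rho)\,dx\ge0\) for the Gibbs density \(\rho=Me^{g}/\int_\Omega e^{g}\,dx\) is the standard proof of that principle. Your integrability bookkeeping and your sharper equality statement (equality if and only if \(u=\rho\) almost everywhere) are valid additions that the paper's one-line proof omits.
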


\begin{proof}
Apply Jensen's inequality to the probability density \(\rho=u/M\) and the
convex function \(s\mapsto s\log s\), or equivalently use the Gibbs variational
principle \(\int\rho g\le\int\rho\log\rho+\log\int e^{g}\) and substitute
\(\rho=u/M\).
\end{proof}

\begin{proposition}[Adams reduction of the free-energy lower bound]
\label{prop:adams-reduction}
Let \(N=4\) and \(u\in\mathcal A_M(\Omega)\). With \(g=\chi K_\tau u\),
Lemma~\ref{lem:gibbs-duality} gives
\[
   \frac{\chi}{2}\int_\Omega uK_\tau u\,dx
   \le\frac12\int_\Omega u\log u\,dx
   +\frac{M}{2}\log\!\Bigl(\tfrac1M\!\int_\Omega e^{\chi K_\tau u}\,dx\Bigr).
\]
Consequently \(\mathcal F_{\mathrm{PES}}\) is bounded below on
\(\mathcal A_M(\Omega)\) as soon as the exponential moment
\(\int_\Omega e^{\chi K_\tau u}\,dx\) is controlled by the entropy. Setting
\(v=K_\tau u\), this is precisely an Adams-type exponential-integrability
question for \(v\): since \(K_\tau\) is smoothing of order four, \(v\) carries
two Laplacians of regularity over \(u\), and the relevant control is the
fourth-order endpoint estimate \eqref{Adams-inequality}, with sharp exponent
\(32\pi^2\), rather than the second-order Trudinger--Moser estimate used in the
two-dimensional theory.
\end{proposition}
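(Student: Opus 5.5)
The displayed inequality follows from Lemma~\ref{lem:gibbs-duality} with the choice \(g=\chi K_\tau u\), after which the "consequently" part is a bookkeeping step that isolates exactly what an Adams-type estimate must supply.

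\textbf{Step 1: admissibility of \(g\).} Lemma~\ref{lem:gibbs-duality} requires \(g\in L^\infty(\Omega)\). For \(u\in L^1\) with \(N=4\), the kernel \(G_\tau\) is only logarithmically singular by Lemma~\ref{lem:log-kernel-four-dim}, so \(K_\tau u\) need not be bounded. I would first work with truncations \(u_k=\min(u,k)\), renormalized to mass \(M\), or with smooth mollifications. For bounded \(u_k\), the kernel expansion (log term plus a remainder bounded near the diagonal) gives \(K_\tau u_k\in L^\infty\). At the end I would pass to the limit. Monotone or dominated convergence handles \(\int u\log u\) (using \(\int u\log u<\infty\)), and Fatou's lemma handles both \(\int uK_\tau u\) and \(\int e^{\chi K_\tau u}\). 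Positivity of \(G_\tau\) near the diagonal, together with positivity of \(K_\tau\) from Lemma~\ref{lem:spectral-positivity}, makes the interaction term monotone under truncation.

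\textbf{Step 2: the displayed inequality.} Apply Lemma~\ref{lem:gibbs-duality} with \(g=\chi K_\tau u\):
\[
   \chi\int_\Omega uK_\tau u\,dx\le\int_\Omega u\log u\,dx+M\log\Bigl(\tfrac1M\int_\Omega e^{\chi K_\tau u}\,dx\Bigr).
\]
Dividing by \(2\) gives exactly the stated bound.

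\textbf{Step 3: the lower bound.} Substituting into \eqref{PES-free-energy-section3} gives
\[
   \mathcal F_{\mathrm{PES}}[u]\ge\tfrac12\int_\Omega u\log u\,dx-\tfrac M2\log\Bigl(\tfrac1M\int_\Omega e^{\chi K_\tau u}\,dx\Bigr).
\]
Hence if \(\log\int e^{\chi K_\tau u}\le\theta M^{-1}\int u\log u+C\) with \(\theta<1\), the right-hand side is at least \(\tfrac{1-\theta}{2}\int u\log u-C'\). This quantity is bounded below, because on a bounded domain Jensen's inequality gives \(\int u\log u\ge M\log(M/|\Omega|)\). This is the precise meaning of "controlled by the entropy."

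\textbf{Step 4: the Adams interpretation.} Set \(v=K_\tau u\). By Lemma~\ref{lem:spectral-positivity}, \(v\) gains four derivatives, with \(A_\tau Av=u\). The question is then exponential integrability of \(\chi v\) with \(\|\Delta v\|\) controlled, which is the setting of \eqref{Adams-inequality}. To pass from the \(v^2\) form to a linear exponent I would use Young's inequality \(\chi v\le 32\pi^2 v^2/\|\Delta v\|^2+\chi^2\|\Delta v\|^2/(128\pi^2)\).

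\textbf{Main obstacle.} The delicate point is Step 1, the justification of Lemma~\ref{lem:gibbs-duality} for unbounded \(g\) and the limit passage in the exponential moment. The Adams remark itself is interpretive rather than a quantitative claim, so it needs no further proof here.
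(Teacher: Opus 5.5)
Your Step 2 is exactly the paper's argument: the paper's entire justification is to apply Lemma~\ref{lem:gibbs-duality} with \(g=\chi K_\tau u\) and halve. Your Step 3 is a correct quantification of ``controlled by the entropy''.

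The part that does not hold up is Step 1, which you single out as the main obstacle. The limit passage runs the wrong way. From \(\chi\int u_kK_\tau u_k\le\int u_k\log u_k+M\log(\frac1M\int e^{\chi K_\tau u_k})\) you need \(\limsup_k\int e^{\chi K_\tau u_k}\le\int e^{\chi K_\tau u}\), whereas Fatou only yields the reverse inequality. Monotonicity would give the right direction, but renormalizing destroys it: \(u_k=c_k\min(u,k)\) with \(c_k\ge1\) need not satisfy \(K_\tau u_k\le K_\tau u\). And if \(K_\tau u\) were unbounded, as you suppose, there is no integrable majorant either, because \(\int e^{c\chi K_\tau u}\) can diverge for \(c>1\). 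In addition, the spectral positivity \(\langle K_\tau f,f\rangle\ge0\) of Lemma~\ref{lem:spectral-positivity} is not positivity preservation, and it does not make \(u\mapsto\int uK_\tau u\) monotone. Monotonicity requires \(G_\tau\ge0\) on all of \(\Omega\times\Omega\), which comes from the maximum principle for the two Neumann resolvents, not from the behaviour near the diagonal.

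The step is in any case unnecessary. Lemma~\ref{lem:gibbs-duality} needs no boundedness of \(g\). Integrating the pointwise Young inequality \(ab\le a\log a-a+e^{b}\) (\(a\ge0\)) with \(a=u\) and \(b=g-\log(\frac1M\int_\Omega e^{g})\) proves it for every measurable \(g\), and it is trivial when \(\int_\Omega e^{g}=\infty\). Alternatively, truncate \(g\) rather than \(u\): apply the lemma to \(g_k=\min(\chi K_\tau u,k)\) and use monotone convergence in both \(\int ug_k\) and \(\int e^{g_k}\).

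In fact the premise of Step 1 fails on \(\mathcal A_M(\Omega)\) for bounded \(\Omega\). Use the same Young inequality with \(b=c\log^+\frac1{|x-y|}\), \(c<4\), together with the global bound \(0\le G_\tau(x,y)\le C(1+\log^+\frac1{|x-y|})\). This bounds \(\sup_x K_\tau u(x)\) in terms of \(M\) and \(\int u\log u\), so the lemma applies to \(g=\chi K_\tau u\) as stated, which is how the paper uses it.

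Finally, your Young step in Step 4 is the right mechanism. Combined with \(\|\Delta K_\tau u\|_{L^2}^2\le\tau^{-1}\int_\Omega uK_\tau u\,dx\) (compare the two multipliers mode by mode), it would bound \(\mathcal F_{\mathrm{PES}}\) below for \(\chi M<64\pi^2\tau\), i.e.\ exactly below the candidate \(M_*\). However, \(v=K_\tau u\) is a Neumann potential, not an element of \(W^{2,2}_0(\Omega)\), so \eqref{Adams-inequality} does not apply to it as quoted. This part therefore remains interpretive, as both you and the paper say.
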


\begin{proposition}[Lower bound from logarithmic interaction control]
\label{prop:conditional-lower-bound}
Assume \(N=4\), \(M>0\), and suppose that every \(u\in\mathcal A_M(\Omega)\)
satisfies \eqref{abstract-log-bound} with
\(a(M,\tau,\Omega)<2/\chi\).  Then there exists
\(C=C(M,\Omega,\tau,\chi)>0\) such that
\begin{equation}
        \mathcal F_{\mathrm{PES}}[u]\ge -C
        \qquad
        \text{for all }u\in\mathcal A_M(\Omega).
\label{free-energy-lower-bound}
\end{equation}
More precisely, with \(\delta=1-\chi a(M,\tau,\Omega)/2>0\),
\begin{equation}
        \mathcal F_{\mathrm{PES}}[u]
        \ge
        \delta\int_\Omega u\log u\,dx
        -
        \frac{\chi}{2}C(M,\tau,\Omega).
\label{coercive-free-energy}
\end{equation}
\end{proposition}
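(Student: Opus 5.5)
The proof is a direct algebraic substitution of the hypothesis \eqref{abstract-log-bound} into the definition \eqref{PES-free-energy-section3}, followed by a Jensen-type lower bound for the entropy on a bounded domain. Fix \(u\in\mathcal A_M(\Omega)\) and write \(H[u]:=\int_\Omega u\log u\,dx\), which is finite by the definition \eqref{admissible-class} of \(\mathcal A_M(\Omega)\). Since \(K_\tau\) is positive by Lemma~\ref{lem:spectral-positivity}, the interaction \(\int_\Omega uK_\tau u\,dx\) is nonnegative. By \eqref{abstract-log-bound} it is also bounded above by \(aH[u]+C\), so it is finite and \(\mathcal F_{\mathrm{PES}}[u]\) is a well-defined real number. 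Here and below we abbreviate \(a=a(M,\tau,\Omega)\) and \(C=C(M,\tau,\Omega)\).

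\emph{Step 1 (coercive bound).} Multiply \eqref{abstract-log-bound} by \(\chi/2>0\), which preserves the inequality, and subtract the result from \(H[u]\). This gives
\[
\mathcal F_{\mathrm{PES}}[u]\ \ge\ H[u]-\frac{\chi}{2}\bigl(aH[u]+C\bigr)=\Bigl(1-\frac{\chi a}{2}\Bigr)H[u]-\frac{\chi}{2}C ,
\]
which is exactly \eqref{coercive-free-energy} with \(\delta=1-\chi a/2\). The assumption \(a<2/\chi\) is used only to guarantee \(\delta>0\). No sign condition on \(a\) itself is needed, because the step only multiplies by \(\chi/2\).

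\emph{Step 2 (entropy bounded below).} The entropy \(H[u]\) may be negative, so \eqref{coercive-free-energy} alone does not yet give \eqref{free-energy-lower-bound}. The main point is a uniform lower bound for \(H\) on \(\mathcal A_M(\Omega)\), and this is where boundedness of \(\Omega\) (implicit in the Adams setting of this section) is used. Apply Lemma~\ref{lem:gibbs-duality} with \(g\equiv0\), or equivalently Jensen's inequality for the convex function \(s\mapsto s\log s\) with respect to the normalized measure \(dx/|\Omega|\). This gives
\[
H[u]\ \ge\ M\log\frac{M}{|\Omega|}.
\]

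\emph{Step 3 (conclusion).} Since \(\delta>0\), Steps 1 and 2 combine to
\[
\mathcal F_{\mathrm{PES}}[u]\ \ge\ \delta M\log\frac{M}{|\Omega|}-\frac{\chi}{2}C .
\]
Hence \eqref{free-energy-lower-bound} holds with
\[
C(M,\Omega,\tau,\chi):=\max\Bigl\{\,\frac{\chi}{2}C(M,\tau,\Omega)-\delta M\log\frac{M}{|\Omega|},\,1\Bigr\}>0,
\]
which depends only on \(M,\Omega,\tau,\chi\). This constant is independent of \(u\) because \(a\) and \(C(M,\tau,\Omega)\) are.

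There is no genuine obstacle once \eqref{abstract-log-bound} is assumed. The one point requiring care is Step 2: the entropy lower bound fails on \(\mathbb R^4\) without a moment or confinement condition, so the statement should be read for bounded \(\Omega\) (or \(\mathbb T^4\)), where \(|\Omega|<\infty\).
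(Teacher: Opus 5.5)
Your Step 1 is exactly the paper's proof, which consists only of inserting \eqref{abstract-log-bound} and observing $\delta>0$. The paper stops there and never explains why a positive multiple of a possibly negative entropy is uniformly bounded below. Your Step 2 (Lemma~\ref{lem:gibbs-duality} with $g\equiv0$) supplies the passage from \eqref{coercive-free-energy} to \eqref{free-energy-lower-bound} that the paper leaves implicit, so your proof is correct on bounded $\Omega$ and on $\mathbb T^4$.

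Your restriction to $|\Omega|<\infty$ is not actually needed. This is relevant because the paper states the proposition for $\mathbb R^4$ as well and applies it there (Theorem~\ref{thm:conservative-global-PES}). Set $I:=\int_\Omega uK_\tau u\,dx\ge0$ and suppose $0<a<2/\chi$. The hypothesis itself gives $\int_\Omega u\log u\,dx\ge(I-C)/a$, hence
\[
\mathcal F_{\mathrm{PES}}[u]\ge\Bigl(\frac1a-\frac\chi2\Bigr)I-\frac Ca\ge-\frac Ca
\]
on every admissible domain, with no Jensen step. The case $a\le0$ cannot occur. Along the concentrating family of Lemma~\ref{lem:candidate-mass}, the left side of \eqref{abstract-log-bound} tends to $+\infty$, while for $a\le0$ the right side eventually stays $\le C$ because the entropy becomes positive.

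On $\mathbb R^4$ the hypothesis in fact never holds. Dilations $R^{-4}U(\cdot/R)$ with $R\to\infty$ drive the entropy to $-\infty$ while $I\ge0$, which excludes $a>0$. Since $a\le0$ is excluded by concentration, the proposition is vacuous there.
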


\begin{proof}
Insert \eqref{abstract-log-bound} into
\eqref{PES-free-energy-section3}.  This gives
\[
        \mathcal F_{\mathrm{PES}}[u]
        \ge
        \left(1-\frac{\chi}{2}a(M,\tau,\Omega)\right)
        \int_\Omega u\log u\,dx
        -
        \frac{\chi}{2}C(M,\tau,\Omega).
\]
Since \(a(M,\tau,\Omega)<2/\chi\), the coefficient of the entropy is positive.
\end{proof}

\begin{conjecture}[Sharp four-dimensional PES threshold]
\label{conj:sharp-PES-threshold}
In the four-dimensional critical regime, the sharp logarithmic interaction
constant should yield the threshold
\begin{equation}
        M_*=\frac{64\pi^2\tau}{\chi}.
\label{conjectured-critical-mass}
\end{equation}
Equivalently, one expects \(\mathcal F_{\mathrm{PES}}\) to be bounded below on
\(\mathcal A_M(\Omega)\) for \(M<M_*\), to lose compactness at \(M=M_*\), and
to be unbounded below for \(M>M_*\).  This statement is conjectural unless the
corresponding sharp \(K_\tau\)-adapted logarithmic HLS/Adams inequality is
proved.
\end{conjecture}

\begin{remark}[The explicit Adams--log-HLS bridge to \(M_*\)]
\label{rem:adams-loghls-bridge}
The candidate threshold is fixed by the following chain, in which Adams is the
operative inequality and the two-dimensional case is recovered as the
second-order shadow, not the justification.

The direct route to the sharp constant is a logarithmic
Hardy--Littlewood--Sobolev inequality adapted to the kernel of \(K_\tau\): one
seeks the optimal \(\Lambda\) such that, for \(\int_\Omega u=M\),
\[
   \frac{1}{8\pi^2\tau}\iint_{\Omega\times\Omega}
        u(x)u(y)\log\frac{1}{|x-y|}\,dx\,dy
   \le
   \Lambda(M,\tau)\int_\Omega u\log u\,dx+C .
\tag{$\star$}
\]
By the duality of logarithmic HLS and exponential-class inequalities
\cite{carlen1992competing}, the sharp constant in \((\star)\) is the dual of
the sharp Adams constant \(32\pi^2\) in \eqref{Adams-inequality}, exactly as the
sharp two-dimensional log-HLS constant is dual to the Trudinger--Moser constant
\(4\pi\). Tracking that duality through the kernel coefficient
\(\frac{1}{8\pi^2\tau}\) identified in
Lemma~\ref{lem:log-kernel-four-dim} yields the sign-change mass
\(M_*=64\pi^2\tau/\chi\) of Lemma~\ref{lem:candidate-mass}. Thus the role of
Adams is not analogical: its sharp constant is, by duality, the constant in
\((\star)\), and \((\star)\) is the inequality whose optimal form
Conjecture~\ref{conj:sharp-PES-threshold} asserts. The open step is the
sharp-constant transfer through this duality for the operator
\(K_\tau\) on \(\Omega\) with the prescribed boundary conditions.
\end{remark}

A lower bound on \(\mathcal F_{\mathrm{PES}}\) is useful only if it yields
compactness.  In the critical dimension \(N=4\), compactness may fail through
concentration of mass at points.  This is the same structural phenomenon as in
two-dimensional KS, with the logarithmic kernel now arising from the
fourth-order operator.

\begin{proposition}[Tightness from bounded entropy and interaction]
\label{prop:tightness}
Let \(N=4\), let \(M>0\), and let \(\{u_k\}\subset\mathcal A_M(\Omega)\) satisfy
\(\sup_k\int_\Omega u_k\log u_k\,dx<\infty\).  If \(\Omega\) is bounded, then
\(\{u_k\}\) is uniformly integrable and, up to a subsequence,
\(u_k\rightharpoonup u\) weakly in \(L^1(\Omega)\).  If \(\Omega=\mathbb R^4\),
the same conclusion holds locally, and global tightness follows once the
sequence also satisfies a uniform moment bound, for example
\(\sup_k\int_{\mathbb R^4}|x|^2u_k(x)\,dx<\infty\).
\end{proposition}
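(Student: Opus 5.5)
The bounded case rests on the de la Vallée-Poussin criterion with the superlinear function $\Phi(s)=s\log s$. First I would convert the entropy bound into a bound on $\int_\Omega u_k\log_+u_k\,dx$. Since $s\log s\ge -1/e$, the negative part satisfies $\int_\Omega u_k\log_-u_k\,dx\le |\Omega|/e$. Hence
\[
\sup_k\int_\Omega u_k\,|\log u_k|\,dx\le \sup_k\int_\Omega u_k\log u_k\,dx+\frac{2|\Omega|}{e}<\infty .
\]
Next, for any $L>1$ and any measurable $E\subset\Omega$, I split $E$ according to whether $u_k\le L$:
\[
\int_E u_k\,dx\le L|E|+\frac{1}{\log L}\int_{\{u_k>L\}}u_k\log u_k\,dx .
\]
Given $\eta>0$, I would choose $L$ so that the second term is below $\eta/2$ uniformly in $k$, and then take $|E|<\eta/(2L)$. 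Together with the fixed mass $M$, this gives uniform integrability. Dunford--Pettis then yields a subsequence converging weakly in $L^1(\Omega)$. The limit $u$ is nonnegative, since the set of nonnegative functions is weakly closed. It has mass $M$, by testing against the constant function $1\in L^\infty(\Omega)$, which is legitimate because $\Omega$ is bounded.

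For $\Omega=\mathbb R^4$ the negative part of the entropy is no longer trivially bounded. The standard device is the Carleman-type estimate: for $\int u_k|x|^2<\infty$, one has $\int_{\{u_k<e^{-|x|^2}\}}u_k\log_-u_k\,dx$ bounded by $C+\int|x|^2u_k\,dx$, and on the complementary set $\log_-u_k\le|x|^2$. This is the step I expect to need the most care. To get the local statement without a moment bound, I would restrict to a ball $B_R$ and argue in one of two ways. The first is to apply the bounded argument directly on $B_R$, using $u\log_-u\le 1/e$ there. The second, if the entropy hypothesis is only on all of $\mathbb R^4$, is to observe that the negative part on $\mathbb R^4$ can be bounded only with extra decay information. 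That is why the moment assumption appears: with it, $\int u_k\log_-u_k\le C(1+\sup_k\int|x|^2u_k)$. Then $\sup_k\int u_k\log_+u_k<\infty$ globally, and in particular on each ball. The bounded-domain splitting then gives uniform integrability on every $B_R$, and a diagonal extraction gives weak $L^1_{\rm loc}$ convergence.

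Finally, global tightness follows from Chebyshev's inequality, $\int_{|x|>R}u_k\,dx\le R^{-2}\sup_j\int|x|^2u_j\,dx$, which is uniformly small. Combined with the local uniform integrability, this upgrades local weak convergence to weak convergence in $L^1(\mathbb R^4)$: test functions in $L^\infty$ are approximated by their truncations to $B_R$, with an error controlled by the tail bound. The limit then retains mass $M$ and belongs to $\mathcal A_M$ by weak lower semicontinuity of the convex entropy, with the moment term absorbing its negative part.

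The main obstacle is the whole-space control of $\int u_k\log_-u_k$. Without decay this can diverge, so I would state the local claim with the convention that the entropy bound controls $\int u_k\log_+u_k$ on each ball. I would then invoke the second-moment bound precisely where the negative part must be controlled globally.
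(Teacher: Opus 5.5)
Your proof is correct and follows the same route as the paper. On bounded domains you use de la Vallée-Poussin with the superlinear function $s\log s$ together with Dunford--Pettis. On $\mathbb R^4$ you use the second-moment bound, through Chebyshev, to stop mass escaping to infinity. Your splitting estimate and the Carleman-type control of $\int u_k\log_-u_k$ supply details that the paper's three-line proof leaves implicit.

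Your caution about the whole-space local claim is well founded. A bound on $\int_{\mathbb R^4}u_k\log u_k$ alone does not control $\int_{B_R}u_k\log_+u_k$. For example, put half the mass in a bump concentrating at scale $\varepsilon_k$, and half in a disjointly supported bump spread at scale $1/\varepsilon_k$. The entropy stays bounded, while uniform integrability on $B_1$ fails. So the paper's ``the argument is local'' tacitly requires either a local entropy bound or exactly the moment control you invoke.
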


\begin{proof}
On bounded domains, the de la Vallée-Poussin criterion applies because
\(s\mapsto s\log s\) is superlinear.  Hence the sequence is uniformly
integrable, and Dunford--Pettis gives weak compactness in \(L^1\).  On
\(\mathbb R^4\), the argument is local; a moment bound prevents loss of mass at
spatial infinity.
\end{proof}

The preceding proposition controls diffuse loss of compactness but does not
exclude concentration.  The logarithmic kernel identifies the possible
concentration mode.

\begin{proposition}[Critical concentration alternative]
\label{prop:concentration-alternative}
Let \(N=4\), let \(\{u_k\}\subset\mathcal A_M(\Omega)\), and assume that
\(\sup_k\mathcal F_{\mathrm{PES}}[u_k]<\infty\) while
\(\sup_k\int_\Omega u_k\log u_k\,dx=\infty\).  Then any loss of compactness is
caused by concentration of mass.  More precisely, after passing to a
subsequence, the measures \(u_k\,dx\) converge weakly-* to
\begin{equation}
        u\,dx+\sum_{\ell\in\mathcal I}m_\ell\delta_{x_\ell},
\label{measure-decomposition}
\end{equation}
where \(\mathcal I\) is at most countable, \(m_\ell>0\), and
\(\int_\Omega u\,dx+\sum_{\ell\in\mathcal I}m_\ell=M\).  If a sharp threshold
\(M_*\) is available, then any concentration mass must satisfy
\(m_\ell\ge M_*\) in the usual concentration-compactness sense.
\end{proposition}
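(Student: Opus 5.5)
The argument splits into two parts: a soft measure-theoretic part that gives the decomposition \eqref{measure-decomposition}, and an energetic part that identifies concentration as the only way compactness can fail and bounds the atom masses from below.

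\emph{Soft part.} Since \(u_k\ge0\) and \(\int_\Omega u_k\,dx=M\), the measures \(u_k\,dx\) are bounded in total variation. If \(\Omega\) is bounded we work on \(\overline\Omega\); on \(\mathbb R^4\) we add the moment bound of Proposition~\ref{prop:tightness}. Banach--Alaoglu and Prokhorov then give a subsequence with \(u_k\,dx\rightharpoonup\mu\) weakly-* and \(\mu(\overline\Omega)=M\). Lebesgue decomposition writes \(\mu=u\,dx+\mu_s\) with \(\mu_s\perp dx\). The atomic part of \(\mu_s\) is a finite measure, so it has at most countably many atoms \(x_\ell\), with masses \(m_\ell>0\) and \(\sum_\ell m_\ell<\infty\).

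\emph{Energetic part.} Concentration is detected by the concentration function \(Q_k(r)=\sup_x\int_{B_r(x)}u_k\,dx\). Suppose \(\lim_{r\to0}\limsup_k Q_k(r)\) is smaller than any level at which the logarithmic kernel can absorb the entropy. We apply Lemma~\ref{lem:gibbs-duality} locally on a finite cover of \(\Omega\) by balls \(B_r(x_i)\), together with the kernel expansion \eqref{kernel-log-four} and a localized Adams inequality \eqref{Adams-inequality}. This should give
\[
\int u_kK_\tau u_k\,dx\le \bigl(a_0\,\varepsilon(r)+o(1)\bigr)\int u_k\log u_k\,dx+C_r,
\]
where \(\varepsilon(r)\) is the maximal local mass. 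This is a local version of \eqref{abstract-log-bound} with small coefficient. Proposition~\ref{prop:conditional-lower-bound} then makes the entropy bounded by \(\sup_k\mathcal F_{\mathrm{PES}}[u_k]\), contradicting \(\sup_k\int u_k\log u_k=\infty\). Hence mass cannot stay spread out: some point \(x_\ell\) must keep \(\liminf_k\int_{B_r(x_\ell)}u_k\ge m>0\) for every \(r\).

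Away from the atoms the same local estimate gives a uniform entropy bound on compact subsets. De la Vallée-Poussin, as in Proposition~\ref{prop:tightness}, then shows that \(u_k\) converges weakly in \(L^1_{\rm loc}\) off the atoms. So the singular part \(\mu_s\) is purely atomic, and mass conservation gives \(\int u+\sum_\ell m_\ell=M\).

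The threshold claim is conditional. If a sharp inequality \((\star)\) holds with constant matching \(M_*\), the localized version of the argument above applies whenever the mass in a small ball is below \(M_*\). In that case the entropy cannot blow up there, so any atom must carry \(m_\ell\ge M_*\). This matches Lemma~\ref{lem:candidate-mass}, where the energy coefficient changes sign only at \(M_*\).

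\emph{Main obstacle.} The hard step is the localization of the logarithmic-kernel bound. We must split \(\iint u_ku_kG_\tau\) into near-diagonal pieces on small balls and off-diagonal pieces. The off-diagonal pieces are bounded by \(M^2\sup_{|x-y|>r}|G_\tau|\). The near-diagonal pieces need an Adams-type bound with constant proportional to the local mass; for this I would use a cutoff \(\psi\) and apply \eqref{Adams-inequality} to \(\psi K_\tau(u\mathbf 1_{B_{2r}})\). The difficulty is that commutator terms from \(\Delta^2\) acting on \(\psi\) must remain lower order, and this is where the cancellation of Lemma~\ref{lem:log-kernel-four-dim} (no \(|x|^{-2}\) term) is essential.
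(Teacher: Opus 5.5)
The failing step is ``away from the atoms the same local estimate gives a uniform entropy bound on compact subsets''. The conditional step ``the entropy cannot blow up there, so any atom must carry $m_\ell\ge M_*$'' fails for the same reason. Your local estimate bounds interaction by entropy. To turn that into an entropy bound on a region, you need the free energy \emph{of that region} to be bounded above, but the hypothesis only bounds the global $\mathcal F_{\mathrm{PES}}[u_k]$. A supercritical concentration elsewhere contributes $-\infty$ and can pay for entropy blow-up anywhere else, so nothing localizes.

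This cannot be repaired: for every $M>64\pi^2\tau/\chi$ the conclusion is false. On $\mathbb T^4$, write $M=m_1+m_2$ with $m_1>M_*$. Let $u_k$ be the sum of two pieces: the bump $\delta_k^{-4}U((x-x_0)/\delta_k)$ with $\int U=m_1$, and mass $m_2$ spread uniformly over the $\varepsilon_k$-neighbourhood of a small $3$-sphere $S$ at positive distance from $x_0$.
\begin{itemize}
\item By Lemma~\ref{lem:candidate-mass}, the bump has free energy $-\kappa\log(1/\delta_k)+O(1)$ with $\kappa=\chi m_1^2/(16\pi^2\tau)-4m_1>0$.
\item The shell has entropy $m_2\log(1/\varepsilon_k)+O(1)$ but bounded self-interaction. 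This is because $\int_{B_\rho(x)}u\le C\rho^3$ uniformly in $k$, and $\log\frac1{|x-y|}$ is integrable against such measures.
\item The cross term is $O(1)$ since the supports are separated.
\end{itemize}
Taking $\kappa\log(1/\delta_k)=m_2\log(1/\varepsilon_k)$ gives $\mathcal F_{\mathrm{PES}}[u_k]=O(1)$ and $\int u_k\log u_k\to\infty$. Yet $u_k\,dx\rightharpoonup m_1\delta_{x_0}+m_2\sigma_S$, where $\sigma_S$ is normalized surface measure. So the singular part is not purely atomic, and $\int u+\sum m_\ell=m_1<M$. Replacing the shell by a second bump of mass $m_2<M_*$ produces an atom with $m_\ell<M_*$, whatever sharp inequality is available.

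The paper's own proof does not close this either. It calls the decomposition the ``standard concentration-compactness alternative'', but Lebesgue decomposition (your soft part) leaves room for a singular continuous part. Its claim that collapse at points is the only mechanism is exactly what the example refutes.

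What your contradiction argument does establish is that a heavy point exists. For that you need neither a localized Adams inequality nor the cutoff and commutator analysis you flag as the main obstacle. Apply Lemma~\ref{lem:gibbs-duality} on $B_r(x)$ with $g(y)=\alpha\log\frac1{|x-y|}$ (truncated, with $\alpha<4$ so that $\int_{B_r}e^g<\infty$). Multiply by $u_k(x)$ and use Fubini. This gives $\iint_{|x-y|<r}u_k(x)u_k(y)\log\frac1{|x-y|}\,dx\,dy\le\frac{\varepsilon}{\alpha}\int u_k\log u_k+C$, where $\varepsilon=\sup_x\int_{B_r(x)}u_k$. Combined with the kernel expansion of Lemma~\ref{lem:log-kernel-four-dim}, this yields $\mathcal F_{\mathrm{PES}}[u_k]\ge\bigl(1-\frac{\chi\varepsilon}{16\pi^2\tau\alpha}\bigr)\int u_k\log u_k-C$. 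Hence entropy blow-up forces $\lim_{r\to0}\limsup_kQ_k(r)\ge64\pi^2\tau/\chi$, i.e.\ at least one atom of mass $\ge M_*$, with no sharp inequality needed.

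This holds at interior points. On a Neumann domain the reflected singularity doubles the logarithmic coefficient at the boundary, so the level there is $M_*/2$. The full decomposition, and the lower bound for \emph{every} atom, need an extra hypothesis that localizes the energy, such as $\varepsilon$-regularity for genuine PES solutions. Alternatively, restrict to $M\le M_*$, where the hypotheses are either vacuous or force $u_k\,dx\rightharpoonup M\delta_{x_0}$.
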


\begin{proof}
The measure decomposition is the standard concentration-compactness
alternative for bounded nonnegative measures of fixed mass.  The logarithmic
singularity \eqref{kernel-log-four} implies that the only mechanism capable of
forcing divergence of the entropy while keeping mass fixed is collapse at
points.  The final quantization statement requires the sharp local lower bound
at mass below \(M_*\); hence it is conditional on the availability of the sharp
inequality.
\end{proof}

\begin{remark}[No unsupported sharpness claim]
\label{rem:no-unsupported-sharpness}
Proposition~\ref{prop:conditional-lower-bound} is a proved lower-bound
criterion under an explicitly stated logarithmic interaction estimate.
Conjecture~\ref{conj:sharp-PES-threshold} is the expected sharp version
suggested by the exact concentration computation in
Lemma~\ref{lem:candidate-mass}.  The two statements should not be conflated.
\end{remark}

The preceding results show that the PES critical regime is governed by a
four-dimensional entropy--logarithmic-interaction balance.  The free energy is
monotone along smooth solutions by Proposition~\ref{prop:PES-energy-dissipation}.
A subcritical logarithmic HLS/Adams estimate gives a lower bound by
Proposition~\ref{prop:conditional-lower-bound}.  The leading concentration
calculation gives the candidate sharp mass \(M_*=64\pi^2\tau/\chi\), but this
constant becomes a theorem only if the sharp \(K_\tau\)-adapted inequality is
proved.  Finally, compactness can fail only through point concentration, as
described by Proposition~\ref{prop:concentration-alternative}.

Thus the PES model should be analyzed in the four-dimensional Adams or logarithmic
class, not in a five-dimensional mass-critical class.  The technical core of
the sharp theory is the following problem: prove the optimal version of
\eqref{abstract-log-bound}, identify whether its sharp constant gives
\(M_*=64\pi^2\tau/\chi\), and determine whether loss of compactness at the
threshold is realized by point concentration.

\section{The Mixed Elliptic--Parabolic Cascade and the Volterra Memory Mechanism}
\label{sec:MEP-volterra}

We next analyze the mixed elliptic--parabolic cascade.  The purpose of this
section is to make clear that MEP is not a static fourth-order elliptic
chemotaxis model.  Its signal field is generated by a Volterra memory
operator.  For every positive time lag this operator has parabolic smoothing,
but its near-diagonal part retains the classical KS drift
singularity.  Thus MEP requires mixed space--time estimates and cannot be
placed in the PES critical class by formal analogy.

Throughout this section, \(A:=I-\Delta\), equipped with the appropriate
realization on \(\Omega\): the Neumann realization on a bounded smooth domain,
the periodic realization on \(\mathbb T^N\), or the usual Bessel operator on
\(\mathbb R^N\).  The MEP system is
\[
\begin{cases}
    \partial_tu=\Delta u-\chi\nabla\cdot(u\nabla c),\\
    0=\Delta w-w+u,\\
    \partial_tc=\Delta c-c+w.
\end{cases}
\tag{MEP}
\label{MEP-section4}
\]
Equivalently, \(Aw=u\) and \(\partial_tc+Ac=w\).

The elliptic equation \(0=\Delta w-w+u\) gives \(w=A^{-1}u\). 
Because the signal equation is parabolic rather than elliptic, $c$ cannot be recovered from $u$ by an instantaneous operator; solving $\partial_t c + Ac = A^{-1}u$ by Duhamel produces instead the time-nonlocal map:
\begin{equation}
        c(t)=e^{-tA}c_0+\int_0^t e^{-(t-s)A}A^{-1}u(s)\,ds.
\label{MEP-Duhamel}
\end{equation}
Thus
\begin{equation}
        \nabla c(t)
        =
        \nabla e^{-tA}c_0
        +
        \int_0^t\nabla e^{-(t-s)A}A^{-1}u(s)\,ds.
\label{MEP-gradient-Duhamel}
\end{equation}

\begin{proposition}[Volterra representation]
\label{prop:MEP-volterra-representation}
Let \(u\) and \(c\) be sufficiently regular solutions of \eqref{MEP-section4}.
Then \(c\) and \(\nabla c\) are given by \eqref{MEP-Duhamel} and
\eqref{MEP-gradient-Duhamel}.  In particular, the map \(u\mapsto\nabla c\) is
a time-dependent Volterra operator, not a static elliptic operator.
\end{proposition}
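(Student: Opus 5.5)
The plan is to derive the representation directly from the parabolic signal equation using the analytic semigroup generated by \(-A\), and then read off the Volterra structure of the map \(u\mapsto\nabla c\).

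First, the elliptic equation \(0=\Delta w-w+u\) reads \(Aw=u\). Since \(A\) is positive, self-adjoint and boundedly invertible in each admissible realization (Neumann, periodic, or Bessel), this gives \(w=A^{-1}u(s)\) for each \(s\). The third equation then becomes the inhomogeneous abstract Cauchy problem
\[
\partial_t c+Ac=A^{-1}u,\qquad c(0)=c_0 .
\]
Set \(\phi(s):=e^{-(t-s)A}c(s)\) for \(0\le s\le t\). For a sufficiently regular solution, say \(c\in C^1([0,T];L^2)\cap C([0,T];D(A))\), the semigroup property gives
\[
\phi'(s)=e^{-(t-s)A}\bigl(Ac(s)+\partial_sc(s)\bigr)=e^{-(t-s)A}A^{-1}u(s).
\]
Integrating from \(0\) to \(t\) yields \eqref{MEP-Duhamel}. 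This is the standard uniqueness-of-mild-solutions argument of analytic semigroup theory \cite{pazy2012semigroups}. The Bochner integral is well defined because \(A^{-1}u\in C([0,T];L^2)\) and the semigroup is contractive.

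Second, to obtain \eqref{MEP-gradient-Duhamel} I would apply \(\nabla\) inside the integral. The step to justify is the interchange. The operator \(\nabla e^{-\theta A}A^{-1}\) is bounded on \(L^2\), uniformly in \(\theta\ge0\), since \(\nabla A^{-1/2}\) is bounded and \(A^{-1/2}e^{-\theta A}\) is a contraction. Hence \(\nabla\) (a closed operator) commutes with the Bochner integral by Hille's theorem. For the term \(\nabla e^{-tA}c_0\), the regularity \(c_0\in W^{1,\infty}\subset H^1\) on bounded domains suffices, with the obvious decay assumptions on \(\mathbb R^N\). On the whole space I would alternatively verify both formulas via the Fourier multiplier \(e^{-\theta(1+|\xi|^2)}(1+|\xi|^2)^{-1}\).

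Finally, the Volterra assertion follows from the form of the formula. The value \(\nabla c(t)\) depends on the whole history \(\{u(s)\}_{0\le s\le t}\) through the kernel \(\nabla e^{-(t-s)A}A^{-1}\), which depends on \(t-s\) and is supported in \(s\le t\). This is a causal convolution-type operator in time, not a map of the form \(u(t)\mapsto Lu(t)\). To make "not static" concrete, take \(u\) constant in time, equal to \(u_*\), with \(c_0=0\). Then
\[
c(t)=(I-e^{-tA})A^{-2}u_*,
\]
which depends on \(t\) and not only on the instantaneous value of \(u\). So no fixed operator \(L\) can satisfy \(c(t)=Lu(t)\) for all solutions. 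I expect the only real obstacle to be the regularity bookkeeping that makes \(\phi\) differentiable, which I would handle by a density/approximation argument if the solution class is weaker.
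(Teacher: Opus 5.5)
Your proposal is correct and follows the same route as the paper. Both arguments eliminate $w=A^{-1}u$, solve $\partial_t c+Ac=A^{-1}u$ by the semigroup variation-of-constants formula, and then take the spatial gradient. Your additional steps (differentiating $e^{-(t-s)A}c(s)$, the Hille interchange of $\nabla$ with the Bochner integral, and the explicit non-static example $c(t)=(I-e^{-tA})A^{-2}u_*$) make rigorous what the paper's three-line proof leaves implicit.
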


\begin{proof}
Since \(Aw=u\), the parabolic signal equation becomes
\(\partial_tc+Ac=A^{-1}u\).  Solving this inhomogeneous linear equation by the
semigroup \(e^{-tA}\) gives \eqref{MEP-Duhamel}.  Taking a spatial gradient
gives \eqref{MEP-gradient-Duhamel}.
\end{proof}

On \(\mathbb R^N\), the memory part of the drift has Fourier multiplier
\begin{equation}
        m(\xi,\theta)
        =
        \frac{i\xi e^{-\theta(1+|\xi|^2)}}{1+|\xi|^2},
        \qquad \theta=t-s>0.
\label{MEP-multiplier-section4}
\end{equation}
For fixed \(\theta>0\), the factor \(e^{-\theta(1+|\xi|^2)}\) gives strong
high-frequency smoothing.  However, the singular behavior relevant to the
Volterra integral occurs as \(s\uparrow t\), equivalently
\(\theta\downarrow0\).  In that limit,
\begin{equation}
        m(\xi,\theta)\longrightarrow \frac{i\xi}{1+|\xi|^2}.
\label{MEP-near-diagonal}
\end{equation}
The multiplier \(i\xi(1+|\xi|^2)^{-1}\) has high-frequency order \(-1\), which
is exactly the order of the classical KS drift
\(\nabla(I-\Delta)^{-1}u\). Thus the near-diagonal singularity of MEP is
classical rather than fourth order. We formalize this idea by the following proposition.

\begin{proposition}[Frozen-lag drift is uniformly classical-KS bounded]
\label{prop:frozen-lag-bound}
Let \(1<q<\infty\) and \(\theta>0\). The frozen-lag drift operator
\(T_\theta:=\nabla e^{-\theta A}A^{-1}\) satisfies
\[
   \|T_\theta f\|_{L^q(\Omega)}\le C(q,\Omega)\,\|f\|_{L^q(\Omega)},
   \qquad\text{uniformly in }\theta>0,
\]
and \(T_\theta\to\nabla A^{-1}\) strongly on \(L^q(\Omega)\) as
\(\theta\downarrow0\). The limit \(\nabla A^{-1}=\nabla(I-\Delta)^{-1}\) is
exactly the classical Keller--Segel drift operator, bounded on \(L^q\) by the
Mikhlin multiplier theorem. Hence the near-diagonal (\(\theta\downarrow0\))
behaviour of the MEP drift is that of the order-\((-1)\) KS drift, made precise
as an \(L^q\)-bound uniform in the lag.
\end{proposition}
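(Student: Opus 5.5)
The idea is to factor the frozen-lag operator as
\[
T_\theta=\nabla A^{-1}\,e^{-\theta A},
\]
which is legitimate because the semigroup commutes with functions of \(A\). The uniform bound then follows from two facts. First, \(\nabla A^{-1}\) is bounded on \(L^q(\Omega)\). Second, \(\{e^{-\theta A}\}_{\theta\ge0}\) is uniformly bounded on \(L^q(\Omega)\); indeed it is contractive. For the second fact I would use that \(e^{\theta\Delta}\) is a positivity-preserving, \(L^1\)- and \(L^\infty\)-contractive Markov semigroup in each admissible setting (Gaussian convolution on \(\mathbb R^N\) and \(\mathbb T^N\), the Neumann heat semigroup on a bounded \(C^2\) domain). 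Riesz--Thorin interpolation then gives \(\|e^{-\theta A}\|_{L^q\to L^q}=e^{-\theta}\|e^{\theta\Delta}\|_{L^q\to L^q}\le1\). Combining the two facts yields \(\|T_\theta f\|_{L^q}\le\|\nabla A^{-1}\|_{L^q\to L^q}\|f\|_{L^q}\), with a constant independent of \(\theta\).

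For the bound on \(\nabla A^{-1}\), I would split by domain.

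\emph{Whole space and torus.} On \(\mathbb R^N\), the symbol \(i\xi/(1+|\xi|^2)\) satisfies the Mikhlin condition \(|\partial^\beta_\xi m|\lesssim|\xi|^{-|\beta|}\), since it is a smooth symbol of order \(-1\le0\). This gives \(L^q\)-boundedness for \(1<q<\infty\). On \(\mathbb T^N\), the same conclusion follows by transference (Marcinkiewicz multiplier theorem).

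\emph{Bounded domain.} Here I would avoid multipliers altogether. Elliptic \(L^q\) regularity for the Neumann problem \((I-\Delta)v=f\) on a \(C^2\) domain gives \(\|v\|_{W^{2,q}}\le C\|f\|_{L^q}\) (Agmon--Douglis--Nirenberg), so in particular \(\|\nabla v\|_{L^q}\le C\|f\|_{L^q}\). In fact \(\nabla A^{-1}\) even maps \(L^q\) into \(W^{1,q}\), which exceeds what is needed. This step is the one most sensitive to the setting: the statement invokes Mikhlin, but on a bounded domain the honest citation is elliptic regularity. I expect this to be the main thing to phrase carefully rather than a genuine obstruction.

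For the strong convergence, write
\[
T_\theta f-\nabla A^{-1}f=\nabla A^{-1}\bigl(e^{-\theta A}f-f\bigr).
\]
It then suffices that \(e^{-\theta A}f\to f\) in \(L^q\) as \(\theta\downarrow0\), i.e., that the heat semigroup is strongly continuous on \(L^q\) for \(1\le q<\infty\). This is standard: the semigroup is the restriction of an analytic semigroup, and on smooth dense functions convergence follows from \(\|e^{-\theta A}g-g\|_{L^q}\le\theta\|Ag\|_{L^q}\). The uniform bound of the first step then extends it to all of \(L^q\) by density (a \(3\varepsilon\) argument). Applying the bounded operator \(\nabla A^{-1}\) finishes the proof.

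Finally, I would remark that the uniform bound degrades if the factorization is not used. Estimating \(\nabla e^{-\theta A}\) alone would give \(\theta^{-1/2}\) blow-up; that singularity is precisely what the extra factor \(A^{-1}\) removes. This is the rigorous content of "order \(-1\), the KS drift order".
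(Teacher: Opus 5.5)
Your proof is correct, but it takes a different route from the paper's. The paper works with the composite symbol. On $\mathbb R^N$ it checks that the whole family $m_\theta(\xi)=i\xi e^{-\theta(1+|\xi|^2)}/(1+|\xi|^2)$ satisfies Mikhlin bounds with $\theta$-independent constants, and deduces uniform $L^q$-boundedness from the Mikhlin theorem. It obtains strong convergence from pointwise convergence of $m_\theta$ together with density of the Schwartz class. The torus is handled by the periodic multiplier theorem, and the Neumann domain by a citation of the $H^\infty$-functional calculus.

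You instead factor $T_\theta=\nabla A^{-1}e^{-\theta A}$. Then only one fixed operator, $\nabla A^{-1}$, needs an $L^q$ bound, and all $\theta$-dependence is absorbed by the $L^q$-contractivity of the Markovian heat semigroup. This buys several things.
\begin{itemize}
\item Uniformity over all $\theta>0$ is automatic; you even get $\|T_\theta\|_{L^q\to L^q}\le e^{-\theta}\|\nabla A^{-1}\|_{L^q\to L^q}$. The paper states its Mikhlin bounds only for $0<\theta\le1$ and leaves $\theta>1$ implicit. That case is easily patched by dilation invariance of the Mikhlin seminorms or by the semigroup property.
\item Strong convergence reduces to strong continuity of the semigroup on $L^q$, which you justify on a dense set via $\|e^{-\theta A}g-g\|_{L^q}\le\theta\|Ag\|_{L^q}$. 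The paper's route needs $L^q$ convergence of $T_\theta f$ for Schwartz $f$ when $q\neq2$, which it asserts without detail.
\item On a bounded $C^2$ domain you replace the $H^\infty$-calculus citation by the concrete Agmon--Douglis--Nirenberg $W^{2,q}$ estimate. As you point out, that estimate, not Mikhlin, is the honest source of the bound there.
\end{itemize}

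The paper's symbol-level argument does not use positivity of the semigroup. It also expresses the ``order $-1$ uniformly in the lag'' statement directly at the level of multipliers. That is the form one would want if the estimate were later fed into operator-valued multiplier or maximal-regularity arguments for the Volterra term.
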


\begin{proof}
On \(\mathbb R^N\), the Fourier multiplier of \(T_\theta\) is
\[
        m_\theta(\xi)
        =
        \frac{i\xi e^{-\theta(1+|\xi|^2)}}{1+|\xi|^2}.
\]
For \(0<\theta\le1\), the family \(\{m_\theta\}_{\theta>0}\) satisfies the
Mikhlin bounds
\[
        |\xi|^{|\alpha|}
        |\partial_\xi^\alpha m_\theta(\xi)|
        \le C_\alpha,
        \qquad |\alpha|\le [N/2]+1,
\]
with constants independent of \(\theta\).  Indeed the exponential factor only
improves high frequencies, while the limiting multiplier
\(i\xi(1+|\xi|^2)^{-1}\) is a standard Bessel--Riesz multiplier of order
\(-1\).  Hence \(T_\theta\) is bounded on \(L^q(\mathbb R^N)\), uniformly in
\(\theta>0\), for every \(1<q<\infty\).  Moreover,
\(m_\theta(\xi)\to i\xi(1+|\xi|^2)^{-1}\) pointwise as \(\theta\downarrow0\),
and the uniform multiplier bounds imply strong convergence on \(L^q\) by
density from the Schwartz class.

On \(\mathbb T^N\) the same conclusion follows from the corresponding
periodic multiplier theorem.  On a bounded smooth Neumann domain it follows
from the \(H^\infty\)-functional calculus and standard \(L^q\)-boundedness of
Bessel potential operators associated with the Neumann Laplacian.
\end{proof}

\begin{remark}[Why MEP is not PES]
\label{rem:MEP-not-PES}
For PES, the eliminated signal is \(c=K_\tau u\), with \(K_\tau\) of order
\(-4\), so \(\nabla K_\tau\) has order \(-3\).  For MEP, the memory kernel
has multiplier \eqref{MEP-multiplier-section4}, whose near-diagonal limit has
order \(-1\).  Therefore MEP cannot be classified as a fourth-order elliptic
system by replacing the Volterra operator with a static resolvent.
\end{remark}

The preceding multiplier computation rules out a purely static fourth-order
scaling classification for MEP.  In particular, the MEP equations do not by
themselves justify a three-dimensional mass-critical claim.  
The question is instead whether the Volterra memory improves time-integrated
estimates enough to alter the classical KS critical scenario.  This
is a space--time regularization problem, not a static kernel problem.

Accordingly, any critical theory for MEP should be formulated in mixed norms,
for example \(u\in L^p(0,T;L^q(\Omega))\), together with estimates for
\begin{equation}
        \mathcal V u(t)
        :=
        \int_0^t\nabla e^{-(t-s)A}A^{-1}u(s)\,ds.
\label{Volterra-drift-operator}
\end{equation}
The admissible exponents must reflect both the heat smoothing for positive time
lags and the singularity as \(s\uparrow t\).

We use the following heat-Bessel estimate.  Let \(1<q\le s<\infty\), and define
\begin{equation}
        \beta(q,s):=
        \left(
        \frac{N}{2}\left(\frac1q-\frac1s\right)-\frac12
        \right)_+.
\label{beta-qs}
\end{equation}
Then, for \(0<\theta\le T\),
\begin{equation}
        \|\nabla e^{-\theta A}A^{-1}f\|_{L^s}
        \le
        C\,\theta^{-\beta(q,s)}\|f\|_{L^q},
\label{heat-bessel-estimate}
\end{equation}
where \(C=C(N,q,s,\Omega,\tau,T)\), and the estimate is understood in the
standard semigroup sense on bounded domains, tori, or \(\mathbb R^N\).  The
case \(\beta(q,s)=0\) includes the boundedness of the Bessel-Riesz operator
\(\nabla A^{-1}\) on \(L^q\), while \(\beta(q,s)>0\) measures the additional
heat smoothing needed to reach \(L^s\).

\begin{proposition}[Space--time smoothing estimate]
\label{prop:MEP-spacetime}
Let \(T>0\), \(1<q\le s<\infty\), and let \(\beta=\beta(q,s)<1\).  Suppose
\(1\le p,r,a\le\infty\) satisfy \(1+1/r=1/a+1/p\) and \(a\beta<1\), with the
usual convention that \(a=\infty\) is allowed only when \(\beta=0\).  Then the
Volterra drift operator \(\mathcal V\) in \eqref{Volterra-drift-operator}
satisfies
\begin{equation}
        \|\mathcal V u\|_{L^r(0,T;L^s)}
        \le
        C\,T^{\frac1a-\beta}\|u\|_{L^p(0,T;L^q)}.
\label{MEP-spacetime-estimate}
\end{equation}
Here \(C=C(N,p,q,r,s,\Omega,T)\), with the usual uniform local-in-time
interpretation on \(\mathbb R^N\).
\end{proposition}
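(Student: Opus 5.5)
The plan is to reduce \eqref{MEP-spacetime-estimate} to a one-dimensional Young convolution inequality in time, using the pointwise-in-time heat--Bessel bound \eqref{heat-bessel-estimate}.

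\emph{Step 1: pointwise-in-time bound.} Fix \(t\in(0,T)\). Apply Minkowski's integral inequality in \(L^s(\Omega)\) to \eqref{Volterra-drift-operator}, and then \eqref{heat-bessel-estimate} with \(\theta=t-\sigma\in(0,T]\). This gives
\[
   \|\mathcal V u(t)\|_{L^s}
   \le \int_0^t \|\nabla e^{-(t-\sigma)A}A^{-1}u(\sigma)\|_{L^s}\,d\sigma
   \le C\int_0^t (t-\sigma)^{-\beta}\,\|u(\sigma)\|_{L^q}\,d\sigma .
\]
On bounded domains and tori we use the standard semigroup form of \eqref{heat-bessel-estimate}. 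On \(\mathbb R^N\) we use its local-in-time version, whose constant depends on \(T\); this is what the uniform local-in-time interpretation in the statement refers to. Measurability of \(\sigma\mapsto\|u(\sigma)\|_{L^q}\) is standard, since \(u\in L^p(0,T;L^q)\) is Bochner measurable.

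\emph{Step 2: Young's inequality in time.} Set \(k(\theta):=\theta^{-\beta}\mathbf 1_{(0,T]}(\theta)\) and \(g(\sigma):=\|u(\sigma)\|_{L^q}\mathbf 1_{(0,T)}(\sigma)\). Step 1 then reads \(\|\mathcal V u(t)\|_{L^s}\le C\,(k*g)(t)\) for \(t\in(0,T)\). Young's convolution inequality on \(\mathbb R\), valid under the exponent relation \(1+1/r=1/a+1/p\), gives
\[
   \|\mathcal Vu\|_{L^r(0,T;L^s)}\le C\,\|k\|_{L^a(0,T)}\,\|u\|_{L^p(0,T;L^q)} .
\]

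\emph{Step 3: the kernel norm.} For \(a<\infty\), the hypothesis \(a\beta<1\) gives
\[
   \|k\|_{L^a(0,T)}=\Bigl(\int_0^T\theta^{-a\beta}\,d\theta\Bigr)^{1/a}
   =(1-a\beta)^{-1/a}\,T^{\frac1a-\beta}.
\]
For \(a=\infty\) the statement allows only \(\beta=0\), so \(k\equiv1\) and \(\|k\|_{L^\infty}=1=T^{0}\). Both cases produce the claimed power \(T^{1/a-\beta}\). The constants \((1-a\beta)^{-1/a}\) and the constant from \eqref{heat-bessel-estimate} are absorbed into \(C=C(N,p,q,r,s,\Omega,T)\).

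\emph{Main obstacle.} The step needing the most care is not the convolution argument but the legitimacy of Step 1, that is, \eqref{heat-bessel-estimate} itself near \(\theta\downarrow0\). When \(\beta=0\), this rests on the lag-uniform \(L^q\) bound of Proposition~\ref{prop:frozen-lag-bound}. In that case \(q\le s\) with \(\tfrac N2(\tfrac1q-\tfrac1s)\le\tfrac12\), and I would combine Proposition~\ref{prop:frozen-lag-bound} with the Sobolev embedding of the Bessel-potential space \(A^{-1/2}L^q\) into \(L^s\). When \(\beta>0\), I would write \(\nabla e^{-\theta A}A^{-1}=(\nabla A^{-1/2})\,A^{-1/2}e^{-\theta A}\) and use \(L^q\)--\(L^s\) smoothing of the analytic semigroup, namely \(\|A^{\gamma}e^{-\theta A}\|_{L^q\to L^s}\lesssim\theta^{-\gamma-\frac N2(\frac1q-\frac1s)}\), with \(\gamma=0\) or negative powers handled by interpolation. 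Since the paper states \eqref{heat-bessel-estimate} as given, I would invoke it directly and only note these ingredients as justification.
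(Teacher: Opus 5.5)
Your proposal is correct and follows the paper's proof essentially step for step. In both, the heat--Bessel estimate (via Minkowski) gives the pointwise bound $\|\mathcal V u(t)\|_{L^s}\le C\int_0^t (t-\sigma)^{-\beta}\|u(\sigma)\|_{L^q}\,d\sigma$, and Young's convolution inequality in time with the kernel $\theta^{-\beta}\mathbf 1_{(0,T]}$ then gives the result, since that kernel's $L^a(0,T)$ norm produces the factor $T^{1/a-\beta}$. Your explicit constant $(1-a\beta)^{-1/a}$, the separate handling of $a=\infty$, the zero-extension justification of Young's inequality on $(0,T)$, and the sketch of where the heat--Bessel bound comes from only spell out details the paper leaves implicit.
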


\begin{proof}
By \eqref{heat-bessel-estimate},
\[
        \|\mathcal V u(t)\|_{L^s}
        \le
        C\int_0^t(t-s)^{-\beta}\|u(s)\|_{L^q}\,ds.
\]
The kernel \(k(t)=t^{-\beta}\mathbf 1_{(0,T)}(t)\) belongs to \(L^a(0,T)\)
precisely when \(a\beta<1\), and
\(\|k\|_{L^a(0,T)}\le C T^{1/a-\beta}\).  Young's convolution inequality on
\((0,T)\), with \(1+1/r=1/a+1/p\), gives \eqref{MEP-spacetime-estimate}.
\end{proof}

\begin{theorem}[Volterra drift estimate]
\label{thm:MEP-volterra-drift}
Let \(T>0\), \(1<q\le s<\infty\), and let \(p,r,a\) satisfy the hypotheses of
Proposition~\ref{prop:MEP-spacetime}.  If \(c_0\in W^{1,s}(\Omega)\) and
\(u\in L^p(0,T;L^q(\Omega))\), then the MEP signal satisfies
\begin{equation}
        \|\nabla c\|_{L^r(0,T;L^s)}
        \le
        C\left(
        T^{1/r}\|\nabla c_0\|_{L^s}
        +
        T^{\frac1a-\beta(q,s)}
        \|u\|_{L^p(0,T;L^q)}
        \right).
\label{MEP-main-volterra-estimate}
\end{equation}
In particular, the memory part gains parabolic smoothing only through the
time-convolution estimate, and this smoothing is limited by the
near-diagonal singularity encoded in \(\beta(q,s)\).
\end{theorem}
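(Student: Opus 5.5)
The proof starts from the Volterra representation \eqref{MEP-gradient-Duhamel} of Proposition~\ref{prop:MEP-volterra-representation}, which splits the drift as
\[
\nabla c(t)=\nabla e^{-tA}c_0+\mathcal V u(t).
\]
The triangle inequality in \(L^r(0,T;L^s)\) then reduces \eqref{MEP-main-volterra-estimate} to two separate bounds, one for the initial-data term and one for the memory term.

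The memory term is handled directly by Proposition~\ref{prop:MEP-spacetime}. Under the stated hypotheses \(1+1/r=1/a+1/p\), \(a\beta<1\), and \(\beta<1\), it gives
\[
\|\mathcal V u\|_{L^r(0,T;L^s)}\le C\,T^{1/a-\beta(q,s)}\|u\|_{L^p(0,T;L^q)}.
\]
This is exactly the second summand in \eqref{MEP-main-volterra-estimate}.

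For the initial-data term, I would use a commutation plus contractivity argument.

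\emph{Commuting the gradient with the semigroup.} On \(\mathbb R^N\) and \(\mathbb T^N\), \(\nabla\) commutes with \(e^{-tA}\). This gives \(\nabla e^{-tA}c_0=e^{-tA}\nabla c_0\), and hence
\[
\|\nabla e^{-tA}c_0\|_{L^s}\le e^{-t}\|\nabla c_0\|_{L^s}\le\|\nabla c_0\|_{L^s},
\]
because the heat semigroup is an \(L^s\) contraction.

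\emph{Bounded Neumann domains.} Here commutation fails, and this is the step I expect to need the most care. The plan is to invoke the standard uniform-in-time gradient estimate for the Neumann heat semigroup on a convex domain or a bounded \(C^2\) domain:
\[
\|\nabla e^{t\Delta}\varphi\|_{L^s}\le C\|\nabla\varphi\|_{L^s}\qquad(0<t\le T,\ 1<s<\infty).
\]
This is a known result (e.g.\ Winkler's semigroup lemmas, or the equivalence of \(W^{1,s}\) with the domain of \(A^{1/2}\) in \(L^s\) via the \(H^\infty\) calculus). Multiplying by the factor \(e^{-t}\) gives the same pointwise-in-time bound as in the whole-space and periodic cases, with a constant \(C(s,\Omega,T)\).

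\emph{Integrating in time.} In all cases the bound on \(\nabla e^{-tA}c_0\) is uniform in \(t\), so integrating its \(r\)-th power over \((0,T)\) gives \(C\,T^{1/r}\|\nabla c_0\|_{L^s}\). For \(r=\infty\) the factor is interpreted as \(1\).

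Adding the two bounds yields \eqref{MEP-main-volterra-estimate}.

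The closing interpretive sentence of the theorem then follows from the structure of the argument. The only gain in the memory term comes from the time-convolution kernel \(\theta^{-\beta}\), which Young's inequality integrates. At \(\beta=0\) no spatial improvement over \(L^q\) occurs. This is consistent with Proposition~\ref{prop:frozen-lag-bound}: the frozen-lag operators converge to the order \(-1\) Keller–Segel drift as \(\theta\downarrow0\).
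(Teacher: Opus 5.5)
Your proposal is correct and follows the paper's proof: you split \eqref{MEP-gradient-Duhamel} into the initial-data term and $\mathcal V u$, bound the memory term by Proposition~\ref{prop:MEP-spacetime}, and integrate a bound on $\|\nabla e^{-tA}c_0\|_{L^s}$ that is uniform in $t\in(0,T)$ to obtain the factor $T^{1/r}$. The paper simply asserts that uniform bound, so your commutation argument on $\mathbb R^N$ and $\mathbb T^N$ and the Neumann gradient estimate on bounded domains only add detail. One small point for the bounded-domain case: if you argue via $D(A^{1/2})=W^{1,s}$, first subtract the mean of $c_0$ (constants are preserved by $e^{t\Delta}$ and annihilated by $\nabla$) and apply Poincar\'e--Wirtinger on the connected domain, so the right-hand side is $\|\nabla c_0\|_{L^s}$ rather than $\|c_0\|_{W^{1,s}}$.
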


\begin{proof}
Use \eqref{MEP-gradient-Duhamel}.  The initial term satisfies
\(\|\nabla e^{-tA}c_0\|_{L^s}\le C\|\nabla c_0\|_{L^s}\), hence contributes
\(C T^{1/r}\|\nabla c_0\|_{L^s}\) in \(L^r(0,T)\).  The memory term is bounded
by Proposition~\ref{prop:MEP-spacetime}.
\end{proof}

The natural continuation criterion for MEP is not a static mass threshold, but
control of a space--time norm strong enough to bound the chemotactic drift.  A
robust version is the following.

\begin{proposition}[Continuation criterion]
\label{prop:MEP-continuation}
Let \((u,c)\) be a classical MEP solution on \([0,T_{\max})\), with
\(u_0\in L^1\cap L^\infty\) and \(c_0\in W^{1,\infty}\).  If, for every
\(T<T_{\max}\),
\begin{equation}
        \|u\|_{L^\infty(0,T;L^\infty)}
        +
        \|\nabla c\|_{L^1(0,T;W^{1,\infty})}
        <\infty,
\label{MEP-continuation-norm}
\end{equation}
then the solution extends beyond \(T_{\max}\).  Consequently, if
\(T_{\max}<\infty\), then at least one of the quantities in
\eqref{MEP-continuation-norm} diverges as \(T\uparrow T_{\max}\).
\end{proposition}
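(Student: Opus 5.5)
The plan is the standard local well-posedness plus a priori bootstrap argument. First I fix the local theory. For data $u_0\in L^\infty$ and $c_0\in W^{1,\infty}$, a contraction in $X_T=L^\infty(0,T;L^\infty)\times L^\infty(0,T;W^{1,\infty})$ uses the Duhamel forms
\[
u(t)=e^{t\Delta}u_0-\chi\int_0^t\nabla\cdot e^{(t-s)\Delta}\bigl(u\nabla c\bigr)(s)\,ds
\]
together with \eqref{MEP-Duhamel}. The smoothing estimate $\|\nabla e^{\theta\Delta}\|_{L^\infty\to L^\infty}\le C\theta^{-1/2}$ and the heat--Bessel bound \eqref{heat-bessel-estimate} then give a unique classical solution. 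The key feature is that its existence time $T_0$ depends only on an upper bound for $\|u_0\|_{L^\infty}+\|c_0\|_{W^{1,\infty}}$. The extension claim therefore reduces to showing that
\[
\sup_{t<T_{\max}}\Bigl(\|u(t)\|_{L^\infty}+\|c(t)\|_{W^{1,\infty}}\Bigr)<\infty
\]
whenever \eqref{MEP-continuation-norm} holds with $T=T_{\max}$, interpreting the hypothesis as finiteness of the norms up to $T_{\max}$. Given that bound, I restart from a time $T_{\max}-T_0/2$ and contradict maximality. The ``consequently'' clause is just the contrapositive.

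The $u$-part of the a priori bound is assumed directly. For $c$, I use \eqref{MEP-Duhamel}. The initial term satisfies $\|e^{-tA}c_0\|_{W^{1,\infty}}\le C\|c_0\|_{W^{1,\infty}}$, since $e^{-tA}$ is a contraction on $L^\infty$ and its gradient is bounded in the Neumann/periodic/whole-space settings. For the memory term, $\nabla A^{-1}$ is bounded from $L^\infty$ to $W^{1,q}$ for every $q<\infty$, and hence into $L^\infty$ by Sobolev embedding on bounded domains (or, on $\mathbb R^N$, from the Bessel kernel $\nabla G_1\in L^1$). This yields
\[
\|c(t)\|_{W^{1,\infty}}\le C\|c_0\|_{W^{1,\infty}}+C\,T_{\max}\,\|u\|_{L^\infty(0,T_{\max};L^\infty)}.
\]
It is worth noting that, with this reading, the hypothesis on $\|\nabla c\|_{L^1W^{1,\infty}}$ is not even needed; it is useful only if one wants to pass to higher regularity.

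Classicality at the restart time needs one further step: $u$ must also have Hölder regularity, so that the restarted solution is classical. I get this from standard parabolic Schauder/$L^p$ theory applied to $\partial_tu-\Delta u=-\chi\nabla\cdot(u\nabla c)$, whose drift coefficient is bounded in $L^\infty$ by the previous step. The $L^1(0,T;W^{1,\infty})$ control of $\nabla c$ enters here, to bound $\nabla\cdot(u\nabla c)=\nabla u\cdot\nabla c+u\Delta c$ in $L^1_t$ in the energy estimates. I expect the main obstacle to be the uniformity of the local existence time $T_0$ in the $W^{1,\infty}$ setting on bounded Neumann domains. The heat semigroup's gradient estimates on $L^\infty$ require care at the boundary, and $W^{1,\infty}$ is not a space of continuity for the semigroup. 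I would handle this by running the fixed point in $L^\infty\times W^{1,q}$ with $q>N$, which is enough by Morrey's embedding to control $\nabla c$ in $C^{0,\gamma}$, and by appealing to the analytic-semigroup framework (Amann/Lunardi) cited in the standing assumptions.
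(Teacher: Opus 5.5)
\textbf{Verdict: correct, by a different route.} Your argument is correct, but it is organized differently from the paper's.

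\textbf{The paper's route.} The paper treats the $u$-equation as a linear drift--diffusion equation whose coefficients are controlled by \emph{both} parts of the hypothesis. The bound on $\|\nabla c\|_{L^1(0,T;W^{1,\infty})}$ is what supplies the ``controlled spatial derivatives.'' It then invokes linear parabolic regularity to get a uniform classical norm of $u$ up to $T_{\max}$, and only afterwards recovers $c$ from its linear equation. The local existence theory is left implicit.

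\textbf{Your route.} You make the restart explicit through a local theory whose existence time depends only on $\|u_0\|_{L^\infty}+\|c_0\|_{W^{1,q}}$ (your $q>N$ version). You obtain the signal bound from the Volterra formula \eqref{MEP-Duhamel} using $\|A^{-1}e^{-\theta A}f\|_{W^{1,\infty}}\le C\|f\|_{L^\infty}$. This makes the $\nabla c$ hypothesis redundant and reduces the criterion to $\sup_{t<T_{\max}}\|u(t)\|_{L^\infty}<\infty$.

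\textbf{What each buys.} Your route gives a sharper statement and exploits the specific MEP structure, in which the signal is slaved to $u$. The paper's route is shorter and does not depend on that structure, at the cost of a stronger hypothesis and an unstated classical local theory.

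\textbf{Reading of the hypothesis.} Your interpretation as uniform finiteness up to $T_{\max}$ is the right one. Taken literally for each fixed $T<T_{\max}$, the hypothesis holds for every classical solution, and the ``consequently'' clause presupposes the uniform version.

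\textbf{Two small repairs.}

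First, $W^{1,q}\hookrightarrow C^{0,\gamma}$ for $q>N$ controls $c$, not $\nabla c$. You do not need H\"older control of $\nabla c$ anyway. For $0<\theta\le1$ one has $\|e^{\theta\Delta}\nabla\cdot(u\nabla c)\|_{L^\infty}\le C\theta^{-\frac12-\frac{N}{2q}}\|u\|_{L^\infty}\|\nabla c\|_{L^q}$, and the exponent is below $1$ when $q>N$, so this already closes the fixed point.

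Second, the separate ``classicality at the restart time'' step can be dropped. Restart from $(u(t_0),c(t_0))\in L^\infty\times W^{1,q}$; instantaneous parabolic smoothing of the local solution, together with uniqueness, yields the classical extension. The vague energy-estimate use of the $L^1(0,T;W^{1,\infty})$ bound is therefore not needed.
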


\begin{proof}
Under \eqref{MEP-continuation-norm}, the equation for \(u\) is a uniformly
parabolic drift-diffusion equation with bounded coefficients and controlled
spatial derivatives.  Standard parabolic continuation gives a uniform
\(L^\infty\)-based classical norm up to \(T_{\max}\), and the linear parabolic
equation for \(c\) then gives the corresponding continuation of the signal.
This contradicts maximality unless \(T_{\max}=\infty\).
\end{proof}

\begin{remark}[Critical closure remains open]
\label{rem:MEP-critical-open}
Proposition~\ref{prop:MEP-continuation} is deliberately stronger than a
critical criterion.  To obtain a genuinely critical MEP theory, one would need
to replace \eqref{MEP-continuation-norm} by a scale-invariant space--time norm
closed under the nonlinear map \(u\mapsto\nabla\cdot(u\nabla c)\), using
Theorem~\ref{thm:MEP-volterra-drift}.  Such a result would be a memory-based
critical theory, not a fourth-order elliptic threshold theorem.
\end{remark}

\subsection{Small-data theory and the absence of a proved three-dimensional threshold}
\label{ssec:small_data}
The estimates above suggest a possible fixed-point framework for mild
solutions.  Define
\begin{equation}
        u(t)=e^{t\Delta}u_0
        -
        \chi\int_0^t\nabla\cdot e^{(t-s)\Delta}
        \bigl(u(s)\nabla c(s)\bigr)\,ds,
\label{MEP-mild-u}
\end{equation}
with \(\nabla c\) given by \eqref{MEP-gradient-Duhamel}.  A small-data theorem
may be proved in a Banach space \(X_T\) once the product estimate
\(u\nabla c\in L^{\tilde p}(0,T;L^{\tilde q})\) closes the heat estimate in
\eqref{MEP-mild-u}.  This gives the following schematic, but mathematically
honest, formulation.

\begin{theorem}[Conditional small-data MEP theory]
\label{thm:MEP-conditional-small-data}
Let \(X_c\) be a Banach space of initial data and \(X\) a space--time solution
space such that the heat map \(u_0\mapsto e^{t\Delta}u_0\), the Volterra map
\(u\mapsto\nabla c\) in \eqref{MEP-gradient-Duhamel}, and the bilinear map
\((u,\nabla c)\mapsto \int_0^t\nabla\cdot e^{(t-s)\Delta}(u\nabla c)(s)\,ds\)
satisfy the estimates
\[
        \|e^{t\Delta}u_0\|_X\le C\|u_0\|_{X_c},
        \qquad
        \|\nabla c\|_Y\le C(\|c_0\|_{Y_0}+\|u\|_X),
\]
and
\[
        \left\|
        \int_0^t\nabla\cdot e^{(t-s)\Delta}(u\nabla c)(s)\,ds
        \right\|_X
        \le
        C\|u\|_X\|\nabla c\|_Y .
\]
Then, for \(\|u_0\|_{X_c}+\|c_0\|_{Y_0}\) sufficiently small, MEP admits a
unique mild solution in \(X\) on the corresponding time interval; if the
estimates are global in time, the solution is global.
\end{theorem}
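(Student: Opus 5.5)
The plan is a Banach fixed-point (contraction) argument for the mild formulation \eqref{MEP-mild-u}. Since $\nabla c$ depends affinely on $u$ through \eqref{MEP-gradient-Duhamel}, I first write
\[
\nabla c[u](t)=\nabla e^{-tA}c_0+\mathcal V u(t),
\]
with $\mathcal V$ the Volterra drift operator \eqref{Volterra-drift-operator}. The second hypothesis then splits into a bound for the initial-datum part, $\|\nabla e^{-tA}c_0\|_Y\le C\|c_0\|_{Y_0}$, and a bound for the linear part, $\|\mathcal V u\|_Y\le C\|u\|_X$. Taking $u=0$ gives the first, and linearity of $\mathcal V$ in $u$ gives the second, so that also $\|\mathcal V(u-v)\|_Y\le C\|u-v\|_X$. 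Next define
\[
\Phi(u)(t):=e^{t\Delta}u_0-\chi\int_0^t\nabla\cdot e^{(t-s)\Delta}\bigl(u\,\nabla c[u]\bigr)(s)\,ds ,
\]
and write $B(u,g)$ for the bilinear map in the third hypothesis.

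\textbf{Self-map on a small ball.} Set $\varepsilon:=\|u_0\|_{X_c}+\|c_0\|_{Y_0}$ and work on the ball $\mathcal B_R:=\{u\in X:\|u\|_X\le R\}$. The three hypotheses combine to give
\[
\|\Phi(u)\|_X\le C\|u_0\|_{X_c}+\chi C\|u\|_X\,C(\|c_0\|_{Y_0}+\|u\|_X)\le C\varepsilon+\chi C^2R(\varepsilon+R).
\]
Choosing $R=2C\varepsilon$ and $\varepsilon$ small enough that $\chi C^2(\varepsilon+R)\le 1/2$ makes $\Phi(\mathcal B_R)\subset\mathcal B_R$.

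\textbf{Contraction.} For $u,v\in\mathcal B_R$, bilinearity gives
\[
u\nabla c[u]-v\nabla c[v]=(u-v)\nabla c[u]+v\,\mathcal V(u-v).
\]
Applying the bilinear estimate to each term yields
\[
\|\Phi(u)-\Phi(v)\|_X\le \chi C\bigl(C(\varepsilon+R)+CR\bigr)\|u-v\|_X .
\]
Shrinking $\varepsilon$ further makes this coefficient at most $1/2$. Banach's theorem then gives a unique fixed point in $\mathcal B_R$, which is the mild solution. With $c$ defined by \eqref{MEP-Duhamel}, the pair $(u,c)$ solves MEP in the mild sense.

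\textbf{Uniqueness in all of $X$.} To upgrade uniqueness from $\mathcal B_R$ to all of $X$, I would use a continuity argument on the time interval: the difference estimate localizes to $[0,T']$ with constants that shrink, or at least do not grow, as $T'\downarrow0$. This localization is the main subtle point, because the hypotheses as stated do not say that the constants are monotone in $T$. I would therefore state uniqueness within the ball, or assume the standard time-restriction property of $X$ and $Y$. If the estimates hold with constants independent of $T$, the same argument runs on $(0,\infty)$ and yields a global solution. The remaining obstacle is only bookkeeping: keeping track of which constant depends on the time interval, so that "corresponding time interval" is made precise.
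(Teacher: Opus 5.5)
Your proposal is correct and follows the same route as the paper: a Banach fixed-point argument on a small ball of $X$. The paper writes out no proof at all; it only remarks before the theorem that the product estimate must close the heat estimate in the mild formulation \eqref{MEP-mild-u}. Your caveat on uniqueness is accurate and applies equally to the paper's statement: the abstract hypotheses give uniqueness only in the small ball (the scalar quadratic $u=y-\chi u^2$ already has a second, large root), so uniqueness in all of $X$ really does need the time-localization structure you describe.
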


\begin{remark}
Theorem~\ref{thm:MEP-conditional-small-data} is a fixed-point template rather
than a mass-critical threshold theorem.  It should not be interpreted as proof
that \(N=3\) is critical for MEP.  A three-dimensional mass-critical result
would require a sharp scale-invariant closure and a threshold mechanism
specific to the Volterra memory operator.
\end{remark}

\section{Global Theory, Blow-Up Scenarios, Open Problems, and Final Revision Strategy}
\label{sec:global-open-final}

This final section summarizes the rigorous consequences of the preceding structural analysis and fixes the final theorem--conjecture architecture of the paper.  The main publishable conclusion is not a universal threshold theorem for all indirect chemotaxis systems, but the classification:
\begin{equation*}
\begin{aligned}
&\mathrm{PES}\Rightarrow\text{four-dimensional Adams/logarithmic criticality},\\[2pt]
&\mathrm{MEP}\Rightarrow\ \text{Volterra-memory criticality with classical near-diagonal drift}.
\end{aligned}
\end{equation*}
Thus PES and MEP belong to different analytical classes.  PES is governed by a static fourth-order self-adjoint elliptic interaction, whereas MEP is governed by a time-memory operator whose near-diagonal drift remains KS type.

The results in this section should be read as a criticality classification rather than a
complete nonlinear blow-up theory.  For KS equations, scaling,
energy structure, critical mass, finite-time blow-up, infinite-time
concentration, and asymptotic profile selection are logically distinct
questions.  The present paper resolves the first two questions for the PES
cascade and identifies the correct variational threshold candidate in the
four-dimensional logarithmic regime.  It does not claim a full dynamical
classification above threshold.  In particular, finite-time blow-up,
infinite-time concentration, profile selection, and mass quantization for PES
remain open problems.

\subsection{PES global theory with threshold}

Let \(N=4\), \(M=\int_\Omega u_0\,dx\), and let
\[
        \mathcal F_{\mathrm{PES}}[u]
        =
        \int_\Omega u\log u\,dx
        -
        \frac{\chi}{2}\int_\Omega uK_\tau u\,dx,
        \qquad
        K_\tau=(I-\tau\Delta)^{-1}(I-\Delta)^{-1}.
\]
The conservative global theory should be formulated through a proved lower bound for \(\mathcal F_{\mathrm{PES}}\), not through an unsupported sharp threshold.  The minimal assumption needed is the following logarithmic interaction estimate: for \(u\in\mathcal A_M(\Omega)\),
\begin{equation}
        \int_\Omega uK_\tau u\,dx
        \le a(M,\tau,\Omega)\int_\Omega u\log u\,dx+C(M,\tau,\Omega),
        \qquad
        a(M,\tau,\Omega)<\frac{2}{\chi}.
\label{eq:subcritical-log-control-final}
\end{equation}
Under \eqref{eq:subcritical-log-control-final}, Proposition~\ref{prop:conditional-lower-bound} gives
\[
        \mathcal F_{\mathrm{PES}}[u]\ge -C(M,\Omega,\tau,\chi).
\]

\begin{theorem}[Conservative subcritical global PES theory]
\label{thm:conservative-global-PES}
Let \(N=4\), let \(\Omega\) be either \(\mathbb R^4\), \(\mathbb T^4\), or a bounded \(C^2\) domain with the boundary conditions stated in Section~\ref{sec:intro-main-results}, and let \(u_0\ge0\) satisfy \(u_0\in L^1(\Omega)\cap L^\infty(\Omega)\), \(\int_\Omega u_0\,dx=M\).  Assume the subcritical logarithmic control \eqref{eq:subcritical-log-control-final}.  Then the PES free energy is bounded from below along smooth solutions, and
\[
        \sup_{0<t<T}\int_\Omega u(t)\log u(t)\,dx
        +
        \int_0^T\!\!\int_\Omega
        u\left|\nabla(\log u-\chi K_\tau u)\right|^2dxdt
        \le C(T,u_0,\Omega,\tau,\chi)
\]
for every \(T>0\).  Consequently, any approximation scheme preserving mass, positivity, and the dissipation identity admits global entropy-level compactness.  If the corresponding local classical theory has a continuation criterion controlled by these entropy bounds and parabolic smoothing, then the solution is global.
\end{theorem}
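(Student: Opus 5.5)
The plan is to combine the energy dissipation identity of Proposition~\ref{prop:PES-energy-dissipation} with the coercive lower bound of Proposition~\ref{prop:conditional-lower-bound}. Integrating \eqref{PES-dissipation-section3} over $(0,t)$ for a smooth positive solution gives
\[
   \mathcal F_{\mathrm{PES}}[u(t)]+\int_0^t\!\!\int_\Omega u\left|\nabla(\log u-\chi K_\tau u)\right|^2dx\,ds=\mathcal F_{\mathrm{PES}}[u_0].
\]
The right-hand side is finite. Indeed, $u_0\in L^1\cap L^\infty$ gives $\int u_0\log u_0\le M\log\|u_0\|_{L^\infty}$. By Lemma~\ref{lem:spectral-positivity}, $0\le\int u_0K_\tau u_0\le C\|u_0\|_{L^2}^2$. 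On $\mathbb R^4$ I would additionally assume a finite second moment, so that the negative part of the entropy is controlled; this is the usual Carleman-type bound $\int u\log^- u\le C+\int|x|^2u$.

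Mass is conserved, because the equation is in divergence form and the no-flux, periodic or decay conditions hold. Hence $u(t)\in\mathcal A_M(\Omega)$ for all $t$, and \eqref{coercive-free-energy} applies at every time:
\[
   \delta\int_\Omega u(t)\log u(t)\,dx-\tfrac{\chi}{2}C(M,\tau,\Omega)\le\mathcal F_{\mathrm{PES}}[u(t)]\le\mathcal F_{\mathrm{PES}}[u_0],\qquad \delta=1-\tfrac{\chi a}{2}>0.
\]
This yields the uniform entropy bound. Returning to the integrated identity, the dissipation integral is bounded by $\mathcal F_{\mathrm{PES}}[u_0]+\tfrac{\chi}{2}C$ plus a lower bound on the entropy.

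On bounded domains and $\mathbb T^4$, Jensen's inequality gives $\int u\log u\ge M\log(M/|\Omega|)$, so that lower bound is automatic. On $\mathbb R^4$ I would propagate the second moment instead. Differentiating $\int|x|^2u$ gives $\frac{d}{dt}\int|x|^2u=2NM+2\chi\int u\,x\cdot\nabla K_\tau u$. The drift term is bounded using the $L^q$ boundedness of $\nabla K_\tau$ (order $-3$, Lemma~\ref{lem:high-frequency-homogeneity}) and Cauchy--Schwarz: it is controlled by $\int|x|^2u$ plus powers of $M$. Gronwall then gives a bound on $[0,T]$, and this is where the $T$-dependence of the constant enters. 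This moment step is the only genuinely delicate point, and I expect it to be the main obstacle. If the kernel bound is awkward, I would instead use symmetrization: $\int u\,x\cdot\nabla K_\tau u=\tfrac12\iint u(x)u(y)(x-y)\cdot\nabla G_\tau(x-y)$, and $|z\cdot\nabla G_\tau(z)|$ is bounded near the origin (log kernel) and decays at infinity (Bessel decay). This gives a clean bound $\le CM^2$.

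The remaining consequences are soft. First, by Proposition~\ref{prop:tightness}, the uniform entropy bound (together with the moment on $\mathbb R^4$) gives uniform integrability and weak $L^1$ compactness. This holds for any mass- and positivity-preserving scheme satisfying the dissipation identity or inequality, since only the inequality was used. Second, globality follows by hypothesis: the assumed continuation criterion, which is driven by these entropy bounds and parabolic smoothing, excludes $T_{\max}<\infty$.
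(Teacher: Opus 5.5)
Your proposal is correct and is essentially the paper's argument. The paper gives no separate proof: it treats the theorem as the direct combination of the dissipation identity of Proposition~\ref{prop:PES-energy-dissipation} with the coercive bound \eqref{coercive-free-energy} of Proposition~\ref{prop:conditional-lower-bound}, followed by Proposition~\ref{prop:tightness} and the assumed continuation criterion. Your second-moment step on $\mathbb R^4$ goes beyond what the paper writes, and of your two routes only the symmetrized one closes, via $\bigl|\int_{\mathbb R^4} u\,x\cdot\nabla K_\tau u\,dx\bigr|\le\tfrac12\sup_z|z\cdot\nabla G_\tau(z)|\,M^2$ with $z\cdot\nabla G_\tau(z)\to-\tfrac{1}{8\pi^2\tau}$ at the origin; the $L^q$-boundedness route would need $L^q$ norms of $u$, not just powers of $M$. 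Note also that on $\mathbb R^4$ hypothesis \eqref{eq:subcritical-log-control-final} fails for every $a$: spreading profiles give $\int u\log u\to-\infty$ while $\int uK_\tau u\ge0$, and concentrating profiles exclude $a\le0$, so the $\mathbb R^4$ case of the theorem is vacuous.
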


\begin{remark}
Theorem~\ref{thm:conservative-global-PES} is intentionally conservative.  It proves global entropy compactness under a stated lower-bound hypothesis.  It does not assert a sharp critical mass unless the sharp Adams/logarithmic HLS estimate is separately proved.
\end{remark}

The stronger target result is Conjecture~\ref{conj:sharp-PES-threshold}.

\begin{remark}[Status of the critical mass constant]
\label{rem:status-critical-mass}
The constant
\[
        M_*^{\rm cand}:=\frac{64\pi^2\tau}{\chi}
\]
is not proved in this paper to be a sharp critical mass.  It is obtained from
the leading logarithmic singularity of the four-dimensional PES kernel and from
the concentration scaling of the free energy.  More precisely, if
\(u_\varepsilon(x)=\varepsilon^{-4}U((x-x_0)/\varepsilon)\) with
\(\int_{\mathbb R^4}U\,dx=M\), then \eqref{energy-scaling} holds:
\[
        \mathcal F_{\mathrm{PES}}[u_\varepsilon]
        =
        \left(
        4M-\frac{\chi M^2}{16\pi^2\tau}
        \right)\log\frac1\varepsilon+O(1).
\]
Thus the energy changes sign under concentration at
\(M=64\pi^2\tau/\chi\).  This computation identifies the natural candidate
threshold, but a sharp threshold theorem would require a sharp logarithmic
HLS or Adams-type inequality adapted to
\[
        K_\tau=(I-\tau\Delta)^{-1}(I-\Delta)^{-1}.
\]
Since such an inequality is not proved here, all sharp-threshold statements are
formulated as conjectures or conditional consequences.
\end{remark}

Above the candidate critical mass, the paper should not assert dynamical blow-up unless a blow-up construction is supplied.  What can be proved at the structural level is energy unboundedness along concentrating sequences.

\begin{proposition}[Variational instability above the candidate mass]
\label{prop:variational-instability-PES}
Let \(N=4\), and let \(M>64\pi^2\tau/\chi\).  Then there exists a sequence
\(\{u_\varepsilon\}\subset\mathcal A_M(\Omega)\) such that
\(u_\varepsilon\,dx\rightharpoonup M\delta_{x_0}\) weakly-* as measures and
\[
        \mathcal F_{\mathrm{PES}}[u_\varepsilon]\to-\infty
        \qquad\text{as }\varepsilon\downarrow0 .
\]
Consequently, the supercritical PES free energy is variationally unstable.
This statement does not imply finite-time blow-up of the PES flow.
\end{proposition}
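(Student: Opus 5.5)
The plan is to use the concentrating family of Lemma~\ref{lem:candidate-mass} directly. Fix an interior point \(x_0\in\Omega\); when \(\Omega=\mathbb R^4\) or \(\mathbb T^4\), any point will do. Choose a smooth \(U\ge0\) supported in \(B_1(0)\) with \(\int U\,dx=M\), and set \(u_\varepsilon(x)=\varepsilon^{-4}U((x-x_0)/\varepsilon)\). For \(\varepsilon<\operatorname{dist}(x_0,\partial\Omega)\) (or \(\varepsilon<1/2\) on the torus, where we work in a chart), the support of \(u_\varepsilon\) lies in \(B_\varepsilon(x_0)\subset\Omega\). Hence \(u_\varepsilon\ge0\), its mass is exactly \(M\), and its entropy is finite because \(U\) is bounded and compactly supported. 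So \(u_\varepsilon\in\mathcal A_M(\Omega)\).

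Weak-* convergence to \(M\delta_{x_0}\) is routine. For \(\varphi\in C_b(\Omega)\), the change of variables gives
\[
\int u_\varepsilon\varphi\,dx=\int U(z)\,\varphi(x_0+\varepsilon z)\,dz\to M\varphi(x_0)
\]
by dominated convergence.

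The energy statement then follows from the expansion \eqref{energy-scaling}:
\[
\mathcal F_{\mathrm{PES}}[u_\varepsilon]=\Bigl(4M-\frac{\chi M^2}{16\pi^2\tau}\Bigr)\log\frac1\varepsilon+O(1).
\]
Factor the bracket as \(M\bigl(4-\chi M/(16\pi^2\tau)\bigr)\). This is strictly negative exactly when \(M>64\pi^2\tau/\chi\). Since \(\log(1/\varepsilon)\to+\infty\) and the remainder is bounded uniformly in \(\varepsilon\), we get \(\mathcal F_{\mathrm{PES}}[u_\varepsilon]\to-\infty\).

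The main point needing care is the uniformity of the \(O(1)\) remainder in the interaction expansion \eqref{interaction-scaling} on each admissible domain. This rests on the decomposition \(G_\tau=\frac1{8\pi^2\tau}\log\frac1{|x-y|}+R_\tau\) from Lemma~\ref{lem:log-kernel-four-dim}, with \(R_\tau\) bounded on a fixed neighbourhood \(B_{r_0}(x_0)\times B_{r_0}(x_0)\) of the diagonal. The key steps are as follows.

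\begin{itemize}
\item Since \(u_\varepsilon\) is supported in \(B_\varepsilon(x_0)\), only the kernel near \((x_0,x_0)\) is sampled. On \(\mathbb R^4\) and \(\mathbb T^4\) this is automatic.
\item On a bounded Neumann domain, take \(r_0<\operatorname{dist}(x_0,\partial\Omega)/2\). The boundary correction of the Green function is then smooth on \(B_{r_0}(x_0)^2\) by interior elliptic regularity, and so it is absorbed into the bounded part of \(R_\tau\).
\item The remainder contributes at most \(M^2\sup_{B_{r_0}^2}|R_\tau|\). The logarithmic part produces \(M^2\log(1/\varepsilon)\) plus the fixed constant \(\iint U(z)U(\zeta)\log\frac1{|z-\zeta|}\,dz\,d\zeta\), which is finite since \(U\in L^\infty\) is compactly supported and \(\log\) is locally integrable in \(\mathbb R^4\).
\end{itemize}

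The entropy term needs no such care: its \(O(1)\) is the fixed number \(\int U\log U\).

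Finally, note that the argument uses only the variational expansion and no information about the PES flow. It therefore gives variational instability and says nothing about finite-time blow-up, which is why the statement makes no dynamical claim.
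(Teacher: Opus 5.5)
Your proposal is correct and follows the paper's proof. Both insert the concentrating family $u_\varepsilon(x)=\varepsilon^{-4}U((x-x_0)/\varepsilon)$ of Lemma~\ref{lem:candidate-mass} into the expansion \eqref{energy-scaling} and note that the coefficient of $\log(1/\varepsilon)$ is negative exactly when $M>64\pi^2\tau/\chi$. Your extra checks supply details the paper leaves implicit without changing the argument: that $u_\varepsilon\in\mathcal A_M(\Omega)$, that $u_\varepsilon\,dx\rightharpoonup M\delta_{x_0}$, and that the $R_\tau$ contribution is bounded uniformly in $\varepsilon$ by interior elliptic regularity near $x_0$.
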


\begin{proof}
Choose \(u_\varepsilon(x)=\varepsilon^{-4}U((x-x_0)/\varepsilon)\), where \(U\ge0\) is smooth, compactly supported, and \(\int_{\mathbb R^4}U\,dx=M\).  By Lemma~\ref{lem:candidate-mass},
\[
        \mathcal F_{\mathrm{PES}}[u_\varepsilon]
        =
        \left(4M-\frac{\chi M^2}{16\pi^2\tau}\right)\log\frac1\varepsilon+O(1).
\]
The coefficient is negative exactly when \(M>64\pi^2\tau/\chi\).  Hence the energy tends to \(-\infty\).
\end{proof}

\begin{problem}[Nonlinear dynamics of supercritical PES]
\label{prob:supercritical-PES-dynamics}
Let \(N=4\) and \(M>64\pi^2\tau/\chi\).  Determine whether smooth PES
solutions with supercritical mass can develop finite-time blow-up.  If
finite-time blow-up occurs, identify the blow-up rate, concentration profile,
and mass quantization.  If finite-time blow-up is ruled out, determine whether
supercritical solutions may instead exhibit infinite-time concentration or
grow-up.  In either case, derive the analogue of the virial, modulation, or
concentration-compactness mechanisms known for the classical two-dimensional
KS equation.
\end{problem}

\begin{remark}[Energy descent is not blow-up]
\label{rem:energy-not-blowup}
Proposition~\ref{prop:variational-instability-PES} proves variational instability above the candidate threshold.  It does not by itself prove finite-time blow-up for the PES flow.  A dynamical blow-up theorem would require an additional mechanism, such as a virial identity, a radial comparison argument, or construction of a self-similar concentrating solution.
\end{remark}

A natural dynamical target is therefore:

\begin{problem}[PES blow-up or concentration dynamics]
\label{prob:PES-dynamics}
For \(N=4\) and \(M>M_*\), determine whether PES solutions exhibit finite-time blow-up, infinite-time concentration, or merely variational energy descent.  In radial geometries, derive or disprove a virial-type criterion capable of turning energy negativity into dynamical concentration.
\end{problem}

\subsection{MEP continuation and possible critical scenarios}

For MEP, the global theory must retain the Volterra memory structure.  The eliminated signal satisfies
\[
        \nabla c(t)=\nabla e^{-tA}c_0+\int_0^t\nabla e^{-(t-s)A}A^{-1}u(s)\,ds.
\]
The near-diagonal multiplier has order \(-1\), so MEP cannot be assigned a mass-critical dimension by static fourth-order scaling.  The appropriate continuation theory should be expressed in mixed space--time norms.

\begin{definition}[Admissible MEP continuation pair]
\label{def:MEP-admissible-pair}
A pair \((p,q)\) is called an admissible MEP continuation pair on \((0,T)\) if the Volterra estimate of Theorem~\ref{thm:MEP-volterra-drift}, combined with parabolic estimates for the \(u\)-equation, implies
\[
        \|u\|_{L^p(0,T;L^q(\Omega))}<\infty
        \quad\Longrightarrow\quad
        \|\nabla c\|_{L^1(0,T;W^{1,\infty}(\Omega))}<\infty
\]
and hence closes the classical continuation criterion.
\end{definition}

\begin{proposition}[Conditional MEP continuation criterion]
\label{prop:MEP-critical-continuation-final}
Let \((u,c)\) be a classical MEP solution on \([0,T_{\max})\), with \(u_0\in L^1\cap L^\infty\) and \(c_0\in W^{1,\infty}\).  Suppose \((p,q)\) is an admissible MEP continuation pair and
\[
        \|u\|_{L^p(0,T_{\max};L^q(\Omega))}<\infty.
\]
Then \(T_{\max}=\infty\).  Equivalently, if \(T_{\max}<\infty\), every admissible continuation norm must diverge as \(t\uparrow T_{\max}\).
\end{proposition}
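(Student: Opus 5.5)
The argument is by contradiction, reducing the statement to the definition of an admissible pair followed by the robust continuation criterion, Proposition~\ref{prop:MEP-continuation}. Suppose \(T_{\max}<\infty\) while \(\|u\|_{L^p(0,T_{\max};L^q(\Omega))}<\infty\). Then for every \(T<T_{\max}\) we have \(\|u\|_{L^p(0,T;L^q)}\le\|u\|_{L^p(0,T_{\max};L^q)}=:K<\infty\), and this bound is uniform in \(T\). Since \((p,q)\) is admissible (Definition~\ref{def:MEP-admissible-pair}), it yields \(\|\nabla c\|_{L^1(0,T;W^{1,\infty})}<\infty\). I will first check that the implication in the definition comes with a quantitative bound depending only on \(K\), \(T_{\max}\), \(c_0\) and the data. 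This holds because the constant in Theorem~\ref{thm:MEP-volterra-drift} depends on \(T\) only monotonically, through \(T^{1/r}\) and \(T^{1/a-\beta}\), so it can be taken at \(T=T_{\max}\). The parabolic bootstrap in the \(u\)-equation is similarly monotone in \(T\). The drift norm therefore stays bounded as \(T\uparrow T_{\max}\).

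Next I obtain the \(L^\infty(0,T;L^\infty)\) bound for \(u\), which the continuation criterion also requires. With \(\nabla c\in L^1(0,T;W^{1,\infty})\), I write the \(u\)-equation in nondivergence form, \(\partial_tu=\Delta u-\chi\nabla c\cdot\nabla u-\chi(\Delta c)u\). The maximum principle (comparison with spatially constant supersolutions) gives
\[
\|u(t)\|_{L^\infty}\le\|u_0\|_{L^\infty}\exp\Bigl(\chi\int_0^t\|\Delta c(s)\|_{L^\infty}\,ds\Bigr),
\]
and the exponent is controlled by the \(L^1W^{1,\infty}\) norm of \(\nabla c\). On bounded domains I will verify that the no-flux condition is compatible with this comparison; it is, since constants satisfy \(\partial_\nu=0\) and \(\partial_\nu c=0\). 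On \(\mathbb R^N\) and \(\mathbb T^N\) the comparison applies directly. This gives \(\sup_{T<T_{\max}}\|u\|_{L^\infty(0,T;L^\infty)}<\infty\).

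Both quantities in \eqref{MEP-continuation-norm} are therefore bounded uniformly as \(T\uparrow T_{\max}\), and Proposition~\ref{prop:MEP-continuation} extends the solution beyond \(T_{\max}\). This contradicts maximality, so \(T_{\max}=\infty\). The equivalent formulation is the contrapositive: if \(T_{\max}<\infty\), then \(\|u\|_{L^p(0,T;L^q)}\) must diverge as \(T\uparrow T_{\max}\) for every admissible pair. Monotonicity in \(T\) turns "not finite on \((0,T_{\max})\)" into divergence of the limit.

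The main obstacle is the uniformity in \(T\) of the admissibility implication. As stated, Definition~\ref{def:MEP-admissible-pair} is only qualitative on a fixed interval, and a bound that degenerates as \(T\uparrow T_{\max}\) would break the argument. I would address this first by reading admissibility on the fixed interval \((0,T_{\max})\), applied to the restriction of \(u\) to \((0,T)\), using the fact that all constants are monotone in \(T\). If that is not enough, I would make it explicit that admissibility is meant with constants depending on \(T\) only through an upper bound on \(T\).
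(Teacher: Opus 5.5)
Your proposal is correct and follows the paper's route. Admissibility, applied on $(0,T_{\max})$, gives $\|\nabla c\|_{L^1(0,T_{\max};W^{1,\infty})}<\infty$, and Proposition~\ref{prop:MEP-continuation} then extends the solution past $T_{\max}$. You also make explicit two points the paper's one-line proof leaves implicit: the uniform-in-$T$ $L^\infty$ bound on $u$, obtained by comparison with $\|u_0\|_{L^\infty}\exp\bigl(\chi\int_0^t\|\Delta c\|_{L^\infty}\,ds\bigr)$, and the requirement that the continuation norms be bounded uniformly as $T\uparrow T_{\max}$ rather than merely finite for each $T<T_{\max}$.
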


\begin{proof}
By admissibility, the assumed \(L^p_tL^q_x\) bound yields
\(\|\nabla c\|_{L^1(0,T_{\max};W^{1,\infty})}<\infty\).  The parabolic
drift-diffusion equation for \(u\) then satisfies the continuation criterion
of Proposition~\ref{prop:MEP-continuation}.  Hence the solution extends beyond
\(T_{\max}\), a contradiction unless \(T_{\max}=\infty\).
\end{proof}

The central unresolved question is:

\begin{problem}[Does memory change criticality?]
\label{prob:memory-criticality-final}
Does the Volterra memory in MEP merely delay classical KS
concentration, or can it genuinely change the critical threshold?  In
particular, is there a scale-invariant mixed norm for MEP whose finiteness
yields global continuation and whose failure corresponds to concentration?
A three-dimensional mass-critical theorem should not be claimed unless this
memory mechanism is proved.
\end{problem}

\subsection{Nonlinear dynamics beyond scaling}
\label{subsec:nonlinear-dynamics-beyond-scaling}

The preceding results should be interpreted as a structural and variational
classification, not as a complete nonlinear dynamics theory.  Scaling identifies
the critical dimension and the natural critical space, but it does not by itself
prove finite-time blow-up, infinite-time concentration, convergence to
asymptotic profiles, or quantization of concentrating mass.  These phenomena
require additional dynamical mechanisms.

For the classical parabolic--elliptic KS system in two dimensions,
the mass-critical scaling is only the starting point.  The sharp nonlinear
theory also uses the logarithmic HLS inequality,
free-energy dissipation, virial identities, concentration compactness, and
refined profile analysis.  In particular, the threshold \(8\pi/\chi\) separates
global subcritical behavior from supercritical collapse in the standard
normalization, while at the critical mass one may observe infinite-time
concentration rather than finite-time blow-up. 
Thus even in the classical second-order model, scaling and nonlinear dynamics are distinct levels of the
analysis.

The same distinction is essential for the PES model.  The present paper proves
that the PES interaction has principal order \(-4\), that the drift has order
\(-3\), that the critical amplitude is \(\alpha=4\), and that the
mass-critical dimension is \(N=4\).  It also proves the free-energy identity
\[
        \frac{d}{dt}\mathcal F_{\mathrm{PES}}[u(t)]
        =
        -
        \int_\Omega
        u\left|\nabla(\log u-\chi K_\tau u)\right|^2dx
        \le0
\]
for smooth positive solutions.  Moreover, concentration testing in dimension
four gives the candidate threshold
\[
        M_*=\frac{64\pi^2\tau}{\chi}.
\]
These statements establish the critical architecture.  They do not,
however, prove that supercritical PES solutions blow up in finite time.

The nonlinear dynamical alternatives for supercritical PES data remain open.
For \(M>M_*\), the free energy is unbounded from below along concentrating
sequences.  This shows variational instability, but it does not determine the
evolutionary scenario.  A solution may in principle develop finite-time blow-up,
concentrate only as \(t\to\infty\), approach a slowly concentrating critical
profile, or remain globally regular while descending along increasingly
concentrated configurations.  Distinguishing these alternatives requires tools
not supplied by scaling alone, such as a fourth-order virial identity, a radial
comparison principle, a modulation analysis near concentrating profiles, or a
sharp concentration-compactness theorem adapted to \(K_\tau\).

Accordingly, the claims of the paper are deliberately separated as follows.
The operator classification, critical scaling, logarithmic kernel structure,
and energy dissipation are proved.  The sharp value of the critical mass is
identified as a concentration-scaling candidate.  Global entropy compactness
below threshold is conditional on a sharp \(K_\tau\)-adapted logarithmic
HLS/Adams inequality.  Finite-time blow-up, infinite-time concentration,
asymptotic profile selection, and concentration quantization are left as open
problems in the nonlinear dynamics of the fourth-order PES flow.

We state the theorem--conjecture architecture.
\begin{enumerate}
    \item \textbf{Theorems and propositions:} spectral positivity of
    \(K_\tau\), fourth-order homogeneity of PES, PES entropy dissipation, PES
    scaling \(q_c=N/4\), four-dimensional logarithmic kernel, Volterra
    representation of MEP, near-diagonal multiplier asymptotics, and
    space--time Volterra estimates.

    \item \textbf{Conditional theorems:} PES global entropy compactness under a
    proved subcritical logarithmic interaction bound; MEP continuation under an
    admissible mixed-norm closure criterion.

    \item \textbf{Conjectures:} sharp PES critical mass \(M_*=64\pi^2\tau/\chi\)
    and any memory-improved MEP critical threshold.

    \item \textbf{Open problems:} dynamical PES blow-up above threshold,
    concentration quantization at \(M_*\), and whether MEP has any
    nonclassical memory-induced critical dimension.
\end{enumerate}

\section{A Numerical Diagnostic for the Operator Classification}
\label{sec:numerical-diagnostic}

This section gives a small numerical diagnostic for the distinction between
direct KS signalling, the PES cascade, and the MEP cascade. 
The purpose is to test whether the three closures display the
operator-level behavior predicted by the preceding analysis. Namely, direct
KS signalling should generate stronger aggregation, PES should act
as a stronger spatial filter, and MEP should approach the quasi-steady PES
regime as the signal relaxation parameter tends to zero.

The numerical test is motivated by two facts.  First, numerical KS
studies commonly monitor peak-density growth as an indicator of aggregation or
blow-up tendency.  Second, indirect signal production can change the aggregation
scenario substantially.  In particular, the indirect signal-production model of
Tao and Winkler \cite{tao2017critical} exhibits a critical-mass phenomenon associated with
infinite-time aggregation rather than finite-time blow-up.  Thus the present
computation is used only as a structural diagnostic, not as a replacement for a
nonlinear threshold theorem.

\subsection{Periodic pseudo-spectral setting}

We work on the two-dimensional flat torus $\Omega=[0,2\pi)^2$ with periodic boundary conditions.  The population equation is
\begin{equation}
\label{eq:numerical-u-equation}
        \partial_t u
        =
        \Delta u-\chi\nabla\cdot(u\nabla c).
\end{equation}
The same initial condition \(u_0\) is evolved under three different signal
closures.

The direct KS closure is taken in Bessel-regularized form,
\begin{equation}
\label{eq:numerical-KS}
        c=(I-\Delta)^{-1}u.
\tag{KS}
\end{equation}
The PES closure is
\begin{equation}
\label{eq:numerical-PES}
        c=(I-\tau\Delta)^{-1}(I-\Delta)^{-1}u.
\tag{PES}
\end{equation}
The MEP closure is
\begin{equation}
\label{eq:numerical-MEP}
        \varepsilon_2\partial_t c
        =
        \Delta c-c+(I-\Delta)^{-1}u.
\tag{MEP}
\end{equation}
Thus KS uses one elliptic filter, PES uses two elliptic filters in series, and
MEP uses an elliptic filter followed by finite-time relaxation of the sensed
signal.

On the torus, the closures are diagonal in Fourier variables.  If \(k\in
\mathbb Z^2\), then
\[
        \widehat c_{\rm KS}(k)
        =
        \frac{\widehat u(k)}{1+|k|^2},
\]
while
\[
        \widehat c_{\rm PES}(k)
        =
        \frac{\widehat u(k)}
        {(1+|k|^2)(1+\tau |k|^2)}.
\]
For MEP, the Fourier modes satisfy
\[
        \varepsilon_2\frac{d}{dt}\widehat c(k,t)
        =
        -(1+|k|^2)\widehat c(k,t)
        +
        \frac{\widehat u(k,t)}{1+|k|^2}.
\]
Hence, as \(\varepsilon_2\to0\), the MEP closure formally approaches the PES
closure with \(\tau=1\):
\[
        \widehat c(k,t)
        \to
        \frac{\widehat u(k,t)}{(1+|k|^2)^2}.
\]

The population equation \eqref{eq:numerical-u-equation} is discretized by a
pseudo-spectral method.  The elliptic closures are evaluated by Fourier
multipliers.  The MEP signal equation is advanced semi-implicitly in Fourier
space.  The \(u\)-equation is advanced by an explicit second-order Runge--Kutta
step with mild spectral filtering and a mass correction.  These stabilizations
are used only to make the diagnostic comparison robust; the computation is not
intended as a positivity-preserving or convergence-certified numerical method.

\subsection{Diagnostics and results}

Three diagnostics are recorded.  The first is the peak population density
\[
        U_{\max}(t):=\max_{x\in\Omega}u(x,t).
\]
The second is an aggregation time
\[
        T_{\rm agg}
        :=
        \inf\left\{
        t>0:
        U_{\max}(t)\ge 2U_{\max}(0)
        \right\}.
\]
If this threshold is not reached during the simulated time interval, we write
\(T_{\rm agg}=-\).

The third diagnostic is an effective smoothing length of the signal field,
defined by
\begin{equation}
\label{eq:effective-smoothing-length}
        \ell_{\rm eff}(c)
        :=
        \left(
        \frac{\int_\Omega |c-\bar c|^2\,dx}
             {\int_\Omega |\nabla c|^2\,dx}
        \right)^{1/2},
        \qquad
        \bar c:=|\Omega|^{-1}\int_\Omega c\,dx.
\end{equation}
This quantity is larger when the signal is dominated by longer spatial
wavelengths.  Thus \(\ell_{\rm eff}\) gives a simple numerical measure of the
spatial filtering imposed by the signal law.

The same initial mass distribution is evolved under KS, PES, and MEP closures.
For MEP, we vary
\[
        \varepsilon_2\in\{10^{-3},10^{-2},10^{-1},1\}.
\]
The final time, spatial grid, and all numerical parameters are kept fixed
across the comparisons.  The measured total mass is conserved up to round-off
error in all reported runs.

\begin{table}[ht]
\centering
\small
\renewcommand{\arraystretch}{1.15}
\caption{Diagnostic comparison of KS, PES, and MEP closures.  Here
\(T_{\rm agg}\) is the first time at which
\(\max_x u(x,t)\ge 2\max_xu(x,0)\).  The value \(T_{\rm agg}=-\)
means that the aggregation threshold was not reached during the simulated
interval.}
\label{tab:diagnostic-KS-PES-MEP}
\begin{tabular}{@{}lccccc@{}}
\toprule
Closure & \(\varepsilon_2\) & \(T_{\rm agg}\) &
\(\max_x u(x,T)\) & Peak growth &
\(\ell_{\rm eff}(c(T))\) \\
\midrule
KS  & --          & \(0.055\) & \(337.304\) & \(84.12\) & \(0.5445\) \\
PES & --          & --        & \(5.472\)   & \(1.365\)  & \(0.9579\) \\
MEP & \(10^{-3}\) & --        & \(5.459\)   & \(1.361\)  & \(0.9580\) \\
MEP & \(10^{-2}\) & --        & \(5.346\)   & \(1.333\)  & \(0.9584\) \\
MEP & \(10^{-1}\) & --        & \(4.569\)   & \(1.139\)  & \(0.9607\) \\
MEP & \(1\)       & --        & \(3.166\)   & \(0.790\)  & \(0.9551\) \\
\bottomrule
\end{tabular}
\end{table}

The direct KS closure exhibits rapid aggregation.  In this run,
\[
        T_{\rm agg}=0.055,
        \qquad
        \max_x u(x,T)=337.304,
\]
and the peak amplification factor is approximately
\[
        \frac{\max_x u(x,T)}{\max_x u(x,0)}=84.12.
\]
By contrast, the PES closure does not reach the aggregation threshold on the
same time interval.  Its final peak density is only
\[
        \max_x u(x,T)=5.472,
\]
with peak amplification factor \(1.365\).  The signal smoothing length also
separates the two closures:
\[
        \ell_{\rm eff}^{\rm KS}(c(T))\approx0.5445,
        \qquad
        \ell_{\rm eff}^{\rm PES}(c(T))\approx0.9579.
\]
Thus the PES closure produces a substantially smoother signal field, consistent
with its interpretation as a fourth-order elliptic filter.

The MEP results interpolate between the quasi-steady PES regime and a
finite-relaxation memory regime.  For \(\varepsilon_2=10^{-3}\), the MEP output
is nearly indistinguishable from PES:
\[
        \max_x u_{\rm MEP}(x,T)=5.459,
        \qquad
        \ell_{\rm eff}^{\rm MEP}(c(T))=0.9580,
\]
compared with
\[
        \max_x u_{\rm PES}(x,T)=5.472,
        \qquad
        \ell_{\rm eff}^{\rm PES}(c(T))=0.9579.
\]
This confirms the expected quasi-steady limit
\[
        \varepsilon_2\to0
        \quad\Longrightarrow\quad
        \mathrm{MEP}\to\mathrm{PES}.
\]
As \(\varepsilon_2\) increases from \(10^{-3}\) to \(1\), the final peak density
decreases from \(5.459\) to \(3.166\).  Thus, in this parameter regime, finite
signal relaxation delays or suppresses peak growth rather than enhancing
aggregation.

The computation supports the mathematical-physics distinction developed in the
paper.  KS, PES, and MEP do not behave as interchangeable chemotaxis closures.
The KS model uses a first-order drift operator
\[
        \nabla(I-\Delta)^{-1},
\]
and in the diagnostic run it produces rapid peak growth.  PES uses
\[
        \nabla(I-\tau\Delta)^{-1}(I-\Delta)^{-1},
\]
which is a stronger spatial filter.  The numerical signal field is accordingly
much smoother, and aggregation is strongly reduced on the tested time interval.
MEP retains the same quasi-steady limit as PES when the final signal relaxation
time is small, but for finite relaxation it introduces memory and history
dependence through
\[
        c(t)
        =
        e^{-t/\varepsilon_2\,A}c_0
        +
        \frac1{\varepsilon_2}
        \int_0^t e^{-(t-s)A/\varepsilon_2}A^{-1}u(s)\,ds,
        \qquad A=I-\Delta.
\]
The observed decrease in peak growth for larger \(\varepsilon_2\) is consistent
with this memory interpretation.

These numerical results test the structural prediction
that PES behaves as a static fourth-order spatial filter and that MEP approaches
PES in the fast-relaxation limit while retaining finite-time memory when
\(\varepsilon_2=\mathcal O(1)\).

\begin{figure}[ht]
\centering
\includegraphics[width=0.72\textwidth]{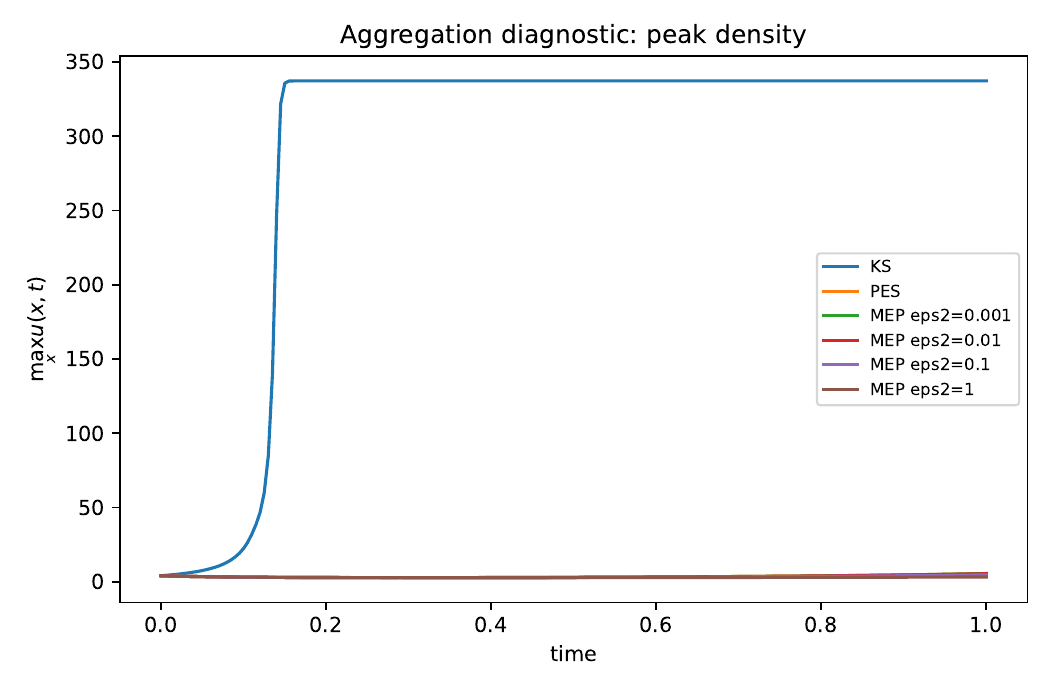}
\caption{Peak-density diagnostic for KS, PES, and MEP closures.  The same
initial population density is evolved under each signal law.  The KS closure
produces rapid peak amplification and crosses the aggregation threshold at
\(T_{\rm agg}=0.055\).  The PES closure remains strongly regularized over the
same time interval.  The MEP closure approaches PES as
\(\varepsilon_2\to0\), while larger \(\varepsilon_2\) delays or suppresses peak
growth.  The computation is a diagnostic pseudo-spectral experiment and is not
used as evidence for a sharp critical mass.}
\label{fig:peak-density-diagnostic}
\end{figure}

\begin{figure}[ht]
\centering
\includegraphics[width=0.72\textwidth]{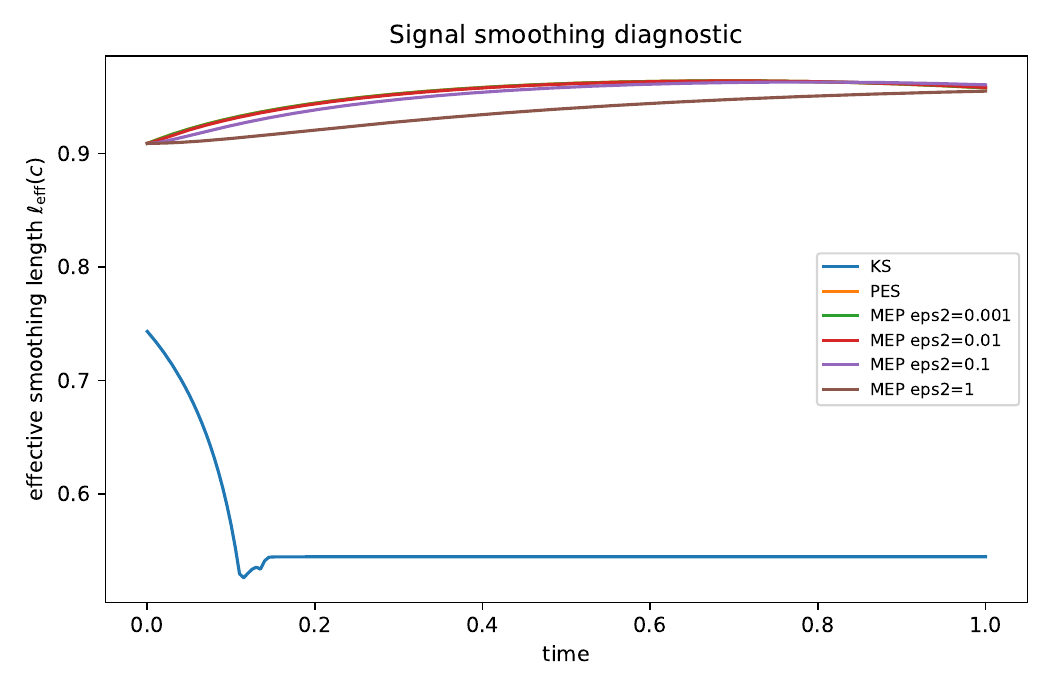}
\caption{Effective smoothing length of the sensed signal field,
\[
        \ell_{\rm eff}(c)
        =
        \left(
        \frac{\int_\Omega |c-\bar c|^2\,dx}
             {\int_\Omega |\nabla c|^2\,dx}
        \right)^{1/2}.
\]
PES and small-\(\varepsilon_2\) MEP produce substantially smoother signal fields
than the direct KS closure, consistent with the fourth-order filtering effect
of the PES operator.}
\label{fig:smoothing-length-diagnostic}
\end{figure}

\begin{figure}[ht]
\centering
\includegraphics[width=0.86\textwidth]{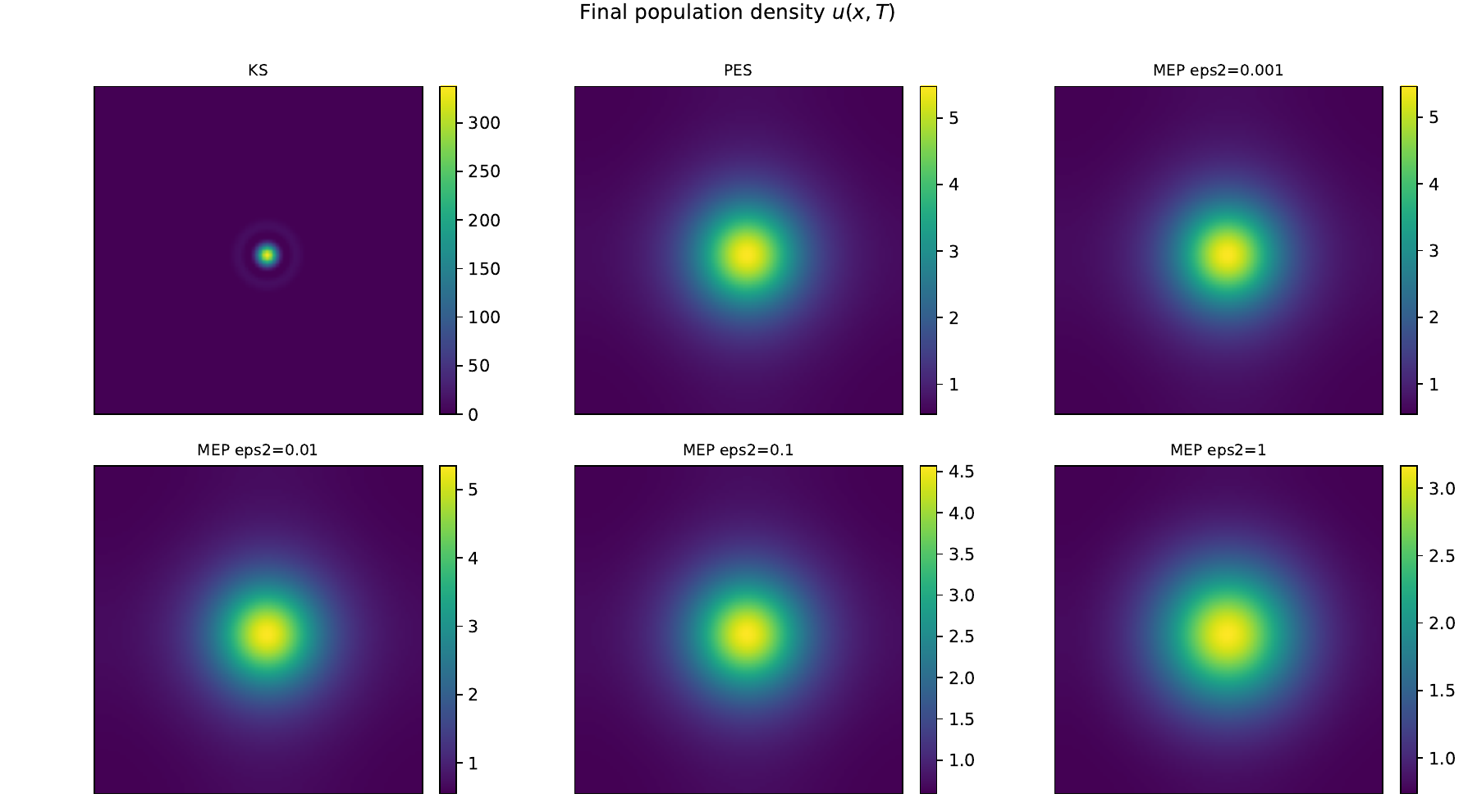}
\caption{Final population densities for the KS, PES, and MEP closures.  The
direct KS closure produces a sharply concentrated aggregate, whereas PES and
small-\(\varepsilon_2\) MEP remain spatially regularized.  Larger
\(\varepsilon_2\) introduces finite-memory effects and further suppresses peak
growth in this diagnostic run.}
\label{fig:final-fields-diagnostic}
\end{figure}

\section{Discussion}

PES admits a rigorous fourth-order structural classification: its self-adjoint elliptic
kernel has principal order \(-4\), its drift has order \(-3\), its scaling
amplitude is \(\alpha=4\), its mass-critical dimension is \(N=4\), and its
natural critical space is \(L^{N/4}\). 
In four dimensions, the PES kernel is
logarithmic, so the correct threshold mechanism is Adams/logarithmic rather
than classical KS.
MEP is fundamentally different.  Its signal is produced by a Volterra memory
operator whose near-diagonal drift has the same order as classical
KS.  Therefore MEP requires mixed space--time estimates and cannot
be assigned a new mass-critical dimension by static elliptic scaling.  The
proper open problem is whether memory changes criticality at all.

The PES--MEP distinction suggests a concrete numerical and experimental test.
One may compare three regimes: direct signalling, quasi-steady two-stage
signalling, and finite-relaxation two-stage signalling.  In nondimensional
parameters, this corresponds to varying the two signal relaxation ratios
\(\varepsilon_1/T_{\rm mig}\) and \(\varepsilon_2/T_{\rm mig}\).  When both are
small, the system should approach the PES limit and exhibit stronger spatial
smoothing of the sensed cue, consistent with a fourth-order elliptic kernel.
When the second ratio is order one, the model enters the MEP regime, and the
observable effect should be temporal delay, memory-dependent gradient formation,
and possible postponement or reshaping of aggregation.  Numerically, this can
be tested by measuring concentration rates, aggregation onset time, peak density
growth, and dependence on the total mass as \(\varepsilon_1,\varepsilon_2\) are
varied.  Experimentally, analogous tests could be designed in systems where
signal degradation, relay, or extracellular conversion rates can be perturbed.
The present work only identifies the precise quasi-steady
cascade under which such a regime is mathematically expected.

This corrected architecture is mathematically credible because it distinguishes
proved operator facts, formal scaling laws, conditional global theory,
variational concentration mechanisms, and genuinely open threshold problems.

\section*{Author’s Important Statements}
All authors are satisfied with this preprint. This research is the joint work with Jiguang Yu from Boston University, USA, Ye Liang from the University of Iowa, USA and Jilin Zhang from Imperial College London, UK.

\end{document}